\newcommand{\R}{\mathbb{R}}
\newcommand{\N}{\mathbb{N}}
\newtheorem{teo}{Theorem}[section]
\newtheorem{defin}[teo]{Definition}
\newtheorem{prop}[teo]{Proposition}
\newtheorem{lemma}[teo]{Lemma}
\begin{document}

\title[Existence results for a nonlinear nonautonomus transmission problem]{Existence results for a nonlinear nonautonomus transmission problem via domain perturbation}

\date{}

\author{Matteo Dalla Riva}

\address[Matteo Dalla Riva]{Dipartimento di Ingegneria, Universit\`a degli Studi di Palermo, Viale delle Scienze, Ed. 8, 90128 Palermo, Italy.}
\email{matteo.dallariva@unipa.it}

\author{Riccardo Molinarolo}

\address[Riccardo Molinarolo]{Dipartimento di Matematica e Applicazioni ``Renato Caccioppoli'', Universit\`a degli Studi di Napoli Federico II, Via Cintia, Monte S. Angelo, 80216 Napoli, Italy.}
\email{riccardo.molinarolo@unina.it}

\author{Paolo Musolino}
\address[Paolo Musolino]{Dipartimento di Scienze Molecolari e Nanosistemi, Universit\`a Ca' Foscari Venezia, via Torino 155, 30170 Venezia Mestre, Italy.}
\email{paolo.musolino@unive.it}

\maketitle

\noindent 
{\bf Abstract:}   In this paper we study the existence and the analytic dependence upon domain perturbation of the solutions of a nonlinear nonautonomous transmission problem for the Laplace equation. The problem is defined in a pair of sets consisting of a perforated domain and an inclusion whose shape is determined by a suitable diffeomorphism $\phi$. First we analyse the case in which the inclusion is a fixed domain. Then we will perturb the inclusion and study the arising boundary value problem and the dependence of a specific family of solutions upon the perturbation parameter $\phi$.
\vspace{9pt}

\noindent
{\bf Keywords:}  nonlinear nonautonomous transmission problem, domain perturbation, Laplace equation, real analyticity, special nonlinear operators

\vspace{9pt}

\noindent   
{{\bf 2020 Mathematics Subject Classification:} 35J25; 35B20; 31B10; 35J65; 47H30}

\section{Introduction}
We begin by introducing the geometric framework of our problem. We fix once and for all a natural number 
\[
n \in \N \setminus \{0,1\}
\]
that will be the dimension of the Euclidean space $\R^n$ we are going to work in. We also fix a parameter
\[
\alpha \in ]0,1[\, ,
\] 
which we use to define the regularity of our sets and functions.
In order to introduce the domains where our problem is defined, we take two sets $\Omega^o$ and $\Omega^i$ that satisfy the following conditions:
\begin{equation}
	\begin{split}
		&\mbox{$\Omega^o$, $\Omega^i$ are bounded open connected subsets of $\R^n$ of class $C^{1,\alpha}$,} 
		\\
		&\mbox{with exteriors  $\R^n\setminus \overline{\Omega^o}$ and $\R^n\setminus \overline{\Omega^i}$ connected  and $\overline{\Omega^i}\subset \Omega^o$}
	\end{split}
\end{equation}
\begin{figure}[ht]
\centering
\includegraphics{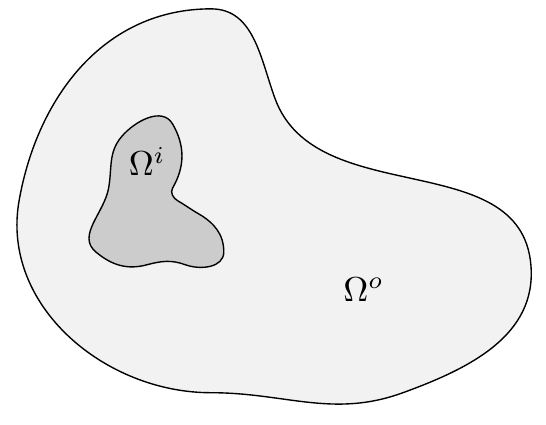}
\caption{\it The domains $\Omega^o$ and $\Omega^i$ ($n=2$)}\label{fig:Oe}
\label{introsetconditions}
\end{figure}
(see Figure \ref{introsetconditions}). Here the superscript  ``$o$'' stands for ``outer domain'' whereas the superscript ``$i$'' stands for ``inner domain.'' We first want to introduce a transmission problem in the pair of domains consisting of $\Omega^o \setminus \overline{\Omega^i}$ and $\Omega^i$.  Therefore, to define the boundary conditions, we fix three functions
\begin{equation}\label{introfunconditions}
F_1 \in C^0(\partial \Omega ^i \times \R \times \R),\qquad F_2 \in  C^0(\partial \Omega^i \times \R \times \R),\quad f^o \in C^{0,\alpha}(\partial\Omega^o).
\end{equation}
The functions $F_1$ and $F_2$ determine the transmission conditions on the inner boundary $\partial \Omega^i$. Instead, $f^o$ plays the role of the Neumann datum on the outer boundary $\partial \Omega^o$. We consider the following nonlinear transmission boundary value problem for a pair $(u^o,u^i) \in C^{1,\alpha}(\overline{\Omega^o} \setminus \Omega^i) \times C^{1,\alpha}(\overline{\Omega^i})$:
\begin{equation}\label{princeq}
	\begin{cases}
		\Delta u^o = 0 & \mbox{in } \Omega^o \setminus \overline{\Omega^i}, \\
		\Delta u^i = 0 & \mbox{in } \Omega^i, \\
		\nu_{\Omega^o}(x) \cdot \nabla u^o(x)=f^o(x) & \forall x \in \partial \Omega^o, \\
		\nu_{\Omega^i}(x) \cdot \nabla u^o (x) = F_1(x,u^o(x),u^i(x)) & \forall x \in \partial \Omega^i, \\
		\nu_{\Omega^i}(x) \cdot \nabla u^i (x) = F_2(x,u^o(x),u^i(x)) & \forall x \in \partial \Omega^i,
	\end{cases}
\end{equation}
where $\nu_{\Omega^o}$ and $\nu_{\Omega^i}$ denote the outward unit normal vector field to $\partial \Omega^o$ and to $\partial  \Omega^i$, respectively.
We note that, \emph{a priori}, it is not clear why problem $(\ref{princeq})$ should admit a classical solution. As a first result, we prove that under suitable conditions on $F_1$ and $F_2$, problem \eqref{princeq} has at least one solution $(u^o,u^i) \in C^{1,\alpha}(\overline{\Omega^o} \setminus \Omega^i) \times C^{1,\alpha}(\overline{\Omega^i})$. We notice that a problem similar to \eqref{princeq} has been studied in Dalla Riva and Mishuris \cite{DaMi15}. More precisely, in \cite{DaMi15} the authors consider a nonlinear transmission problem with a Dirichlet boundary condition on $\partial\Omega^o$ and a jump type condition for the normal derivative across the interface $\partial\Omega^i$. Then we study the existence and the analytic dependence of the solutions of the transmission problem \eqref{princeq} upon domain perturbation of the inclusion, {i.e.} of the inner set $\Omega^i$. Hence, we introduce a ``perturbed'' version of  problem \eqref{princeq}: we fix the external domain $\Omega^o$ and we assume that the boundary of the internal domain is of the form $\phi (\partial\Omega^i)$, where $\phi$ is a diffeomorphism of $\partial\Omega^i$  into a subset of $\R^n$ that belongs to the class
\begin{equation}\label{A_Omega^i}
\begin{split}
\mathcal{A}_{\partial\Omega^i} \equiv \Big\{ \phi & \in  C^{1,\alpha}(\partial\Omega^i, \R^n): \, \phi \text{ is injective and}\\
&\text{the differential} \, d\phi(y) \text{ is injective for all } y \in \partial\Omega^i  \Big\}.
\end{split}
\end{equation}
Clearly, the identity function of $\partial\Omega^i$ belongs to the class $\mathcal{A}_{\partial\Omega^i}$, and, for convenience, we set 
\begin{equation}\label{phi0}
\phi_0 \equiv \text{id}_{\partial\Omega^i}.
\end{equation}
Then by the Jordan Leray Separation Theorem (cf., {e.g.}, Deimling \cite[Theorem 5.2]{De85} and \cite[\S A.4]{DaLaMu21}), $\R^n \setminus \phi(\partial\Omega^i)$ has exactly two open connected
components for all $\phi \in  \mathcal{A}_{\partial\Omega^i}$, and we define $\Omega^i[\phi]$ to be the unique bounded open
connected component of $\R^n \setminus \phi(\partial\Omega^i)$. We set
\[
\mathcal{A}^{\Omega^o}_{\partial\Omega^i} \equiv \left\{ \phi \in \mathcal{A}_{\partial\Omega^i} : \overline{\Omega^i[\phi]} \subset \Omega^o  \right\}\, .
\]
By assumption \eqref{introsetconditions}, $\phi_0 \in \mathcal{A}^{\Omega^o}_{\partial\Omega^i}$. Now let $\phi \in \mathcal{A}^{\Omega^o}_{\partial\Omega^i}$. We wish to consider the following nonlinear transmission boundary value problem for a pair of functions $(u^o,u^i) \in C^{1,\alpha}(\overline{\Omega^o} \setminus \Omega^i[\phi]) \times C^{1,\alpha}(\overline{\Omega^i[\phi]})$:
\begin{equation}\label{princeqpertu}
\begin{cases}
\Delta u^o = 0 & \mbox{in } \Omega^o \setminus \overline{\Omega^i[\phi]}, \\
\Delta u^i = 0 & \mbox{in } \Omega^i[\phi], \\
\nu_{\Omega^o}(x) \cdot \nabla u^o(x)=f^o(x) & \forall x \in \partial \Omega^o, \\
\nu_{\Omega^i[\phi]}(x) \cdot \nabla u^o (x) = F_1(\phi^{(-1)}(x),u^o(x),u^i(x)) & \forall x \in \phi(\partial\Omega^i), \\
\nu_{\Omega^i[\phi]}(x) \cdot \nabla u^i (x) = F_2(\phi^{(-1)}(x),u^o(x),u^i(x)) & \forall x \in \phi(\partial\Omega^i),
\end{cases}
\end{equation}
where $\nu_{\Omega^i[\phi]}$ denotes the outward unit normal vector field to $\Omega^i[\phi]$. We prove that, under suitable conditions, problem \eqref{princeqpertu} admits a family of solutions $\{(u^o_\phi,u^i_\phi)\}_{\phi \in Q_0}$, where $Q_0$ is a neighbourhood of $\phi_0$ in $\mathcal{A}^{\Omega^o}_{\partial\Omega^i}$ and $(u^o_\phi,u^i_\phi)  \in C^{1,\alpha}(\overline{\Omega^o} \setminus \Omega^i[\phi]) \times C^{1,\alpha}(\overline{\Omega^i[\phi]})$ for every $\phi \in Q_0$. In literature, the existence of solutions of nonlinear boundary value problems has been largely investigated by means of variational techniques (see, {e.g.}, the monographs of Ne\v{c}as \cite{Ne83} and of Roub\'i\v{c}ek \cite{Ro13} and the references therein).
Moreover, potential theoretic techniques have been widely exploited to study nonlinear boundary value problems with  transmission conditions by Berger, Warnecke, and Wendland \cite{BeWaWe90}, by Costabel and Stephan \cite{CoSt90}, by Gatica and Hsiao \cite{GaHs95}, and by Barrenechea and Gatica \cite{BaGa96}. Boundary integral methods have been applied also by Mityushev and Rogosin for the analysis of transmission problems in the plane (cf.~\cite[Chap.~5]{MiRo00}).  
 Several authors have investigated the dependence upon domain perturbation of the solutions to boundary value problems and it is impossible to provide a complete list of contributions. Here we mention, for example,  Henrot and Pierre \cite{HePi05}, Henry \cite{He05}, Keldysh \cite{Ke66}, Novotny and Soko\l owski \cite{NoSo13}, and   Soko\l owski and Zol\'esio \cite{SoZo92}. Most of the contributions on this topic deal with first or second order shape derivability of functionals associated to the solutions of linear boundary value problems. 
In the present paper, instead,  we are interested into higher order regularity properties (namely real analiticity) of the solutions of a nonlinear problem. To do so, we choose to adopt the Functional Analytic Approach, which has
revealed to be a  powerful tool to analyse perturbed linear and nonlinear boundary value problems. This method has been first applied to investigate regular and singular domain perturbation problems for elliptic equations and systems with the aim of proving real analytic dependence upon the perturbation parameter (cf. Lanza de Cristoforis \cite{La02,La07-2,La10}). An application to the study of the behaviour of the effective conductivity of a periodic two-phase composite upon perturbations of the inclusion can be found in Luzzini and Musolino \cite{LuMu20}. The key point of the strategy of the method is the transformation of the perturbed boundary value problem into an equivalent functional equation that can be studied by the Implicit Function Theorem. Typically, such a transformation is achieved by exploiting classical results of potential theory, for example integral representation of harmonic functions in terms of layer potentials. Nonlinear transmission problems in perturbed domains have been studied by Lanza de Cristoforis in \cite{La10} and by the authors of the present paper in \cite{DaMoMu19, Mo19}, where they have investigated the behavior of the solution of a nonlinear transmission problem for the Laplace equation in a domain with a small inclusion shrinking to a point.

The paper is organised as follows. In Section \ref{notation} we define some of the symbols used later on. In Section \ref{Preliminaries} we introduce some classical results of potential theory that we need. Section \ref{sec princeq} is devoted to the study of problem \eqref{princeq}. 
We first prove a representation result for harmonic functions in $\overline{\Omega^o}\setminus\Omega$ and $\overline{\Omega}$ (where $\Omega$ is an open bounded connected subset of class $C^{1,\alpha}$ contained in $\Omega^o$) in terms of single layer potentials with appropriate densities and constant functions (cf. Lemma \ref{rapprharm}). Then we prove an uniqueness result in $C^{1,\alpha}(\Omega^o\setminus\overline{\Omega^i}) \times C^{1,\alpha}(\overline{\Omega^i})$ for an homogeneous linear transmission problem in the pair of domains $\Omega^o\setminus\overline{\Omega^i}$ and $\Omega^i$ and we analyse an auxiliary boundary operator arising from the integral formulation of that problem (cf. Lemma \ref{Alemma} and Proposition \ref{J_A}). In Proposition \ref{propintsys} we provide a formulation of problem \eqref{princeq} in terms of integral equations. 
The obtained integral system is solved by means of a fixed-point theorem, namely the Leray-Schauder Theorem (cf. Proposition \ref{Tcontcomp} and Proposition \ref{prop mu_0}). Finally, under suitable conditions on the functions $F_1$ and $F_2$, we obtain an existence results in $C^{1,\alpha}(\Omega^o\setminus\overline{\Omega^i}) \times C^{1,\alpha}(\overline{\Omega^i})$ for problem \eqref{princeq} (cf. Proposition \ref{prop u^o_0,u^i_0}).
Section \ref{sec princeqpertu} is devoted to the study of problem \eqref{princeqpertu}. We provide a formulation of problem \eqref{princeqpertu} in terms of integral equations depending on the diffeomorphism $\phi$ which we rewrite into an equation of the type $M[\phi,\mu] = 0$ for an auxiliary map $M: \mathcal{A}^{\Omega^o}_{\partial\Omega^i} \times X \to Y$ (with $X$ and $Y$ suitable Banach spaces), where the variable $\mu$ is related to the densities of the integral representation of the solution (cf. Proposition \ref{M=0prop}). Then, by analiticity results for the dependence of single and double layer potentials upon the perturbation of the support, we prove that $M$ is real analytic (cf. Proposition \ref{Mrealanal}) and the differential of $M$ with respect to the variable $\mu \in X$ is an isomorphism (cf. Proposition \ref{diffMprop}). Hence, by the Implicit Function Theorem, we show the existence of a family of solutions $\{(u^o_\phi,u^i_\phi)\}_{\phi \in Q_0}$ of  \eqref{princeqpertu} (cf. Theorem \ref{upertuex}) and we prove that it can be represented in terms of real analytic functions (cf. Theorem \ref{upertuana}).

\section{Notation}\label{notation}
We denote by $\N$ the set of natural numbers including $0$. We denote the norm of a real normed space $X$ by $\| \cdot \| _X$. We denote by $I_X$ the identity operator from $X$ to itself and we omit the subscript $X$ where no ambiguity can occur. If $X$ and $Y$ are normed spaces we  consider on the product space $X \times Y$ the norm defined by $\| (x,y) \|_{X \times Y} \equiv \|x\|_X + \|y\|_Y $ for all $(x,y) \in X \times Y$, while we use the Euclidean norm for $\R^d$, $d\in\mathbb{N}\setminus\{0,1\}$. If $U$ is an open subset of $X$, and $F:U \to Y$ is a Fr\'echet-differentiable map in $U$, we denote the differential of $F$ by $dF$.
The inverse function of an invertible function $f$ is denoted by $f^{(-1)}$, while the reciprocal of a non-zero scalar function $g$ or the inverse of an invertible matrix $A$ are denoted by $g^{-1}$ and $A^{-1}$ respectively. Let $\Omega \subseteq \R^n$. Then $\overline{\Omega}$ denotes the closure of $\Omega$ in $\R^n$, $\partial \Omega$ denotes the boundary of $\Omega$, and $\nu_\Omega$ denotes the outward unit normal to $\partial \Omega$.
For $x \in \R^d$, $x_j$ denotes the $j$-th coordinate of $x$, $|x|$ denotes the Euclidean modulus of $x$ in $\R^d$. If $x \in \mathbb{R}^d$ and $r>0$, we denote by $B_d(x,r)$ the open ball of center $x$ and radius $r$.  Let $\Omega$ be an open subset of $\R^n$ and $m \in \N \setminus \{0\}$. The space of $m$ times continuously differentiable real-valued function on $\Omega$ is denoted by $C^m(\Omega,\R)$ or more simply by $C^m(\Omega)$. Let $r \in \N \setminus \{0\}$, $f \in (C^m(\Omega))^r$. The $s$-th component of $f$ is denoted by $f_s$ and the gradient of $f_s$ is denoted by $\nabla f_s$. Let $\eta=(\eta_1, \dots ,\eta_n) \in \N^n$ and $|\eta|=\eta_1+ \dots+\eta_n$. Then $D^\eta f \equiv \frac{\partial^{|\eta|}f}{\partial x^{\eta_1}_1, \dots , \partial x^{\eta_n}_n}$. 
We retain the standard notation for the space $C^{\infty}(\Omega)$ and its subspace $C^{\infty}_c(\Omega)$ of functions with compact support. The subspace of $C^m(\Omega)$ of those functions $f$ such that $f$ and its derivatives $D^\eta f$ of order $|\eta|\le m$ can be extended with continuity to $\overline{\Omega}$ is denoted $C^m(\overline{\Omega})$. 
We denote by $C^m_b(\overline{\Omega})$ the space of functions of $C^m(\overline{\Omega})$ such that $D^{\eta} f$ is bounded for $|\eta|\leq m$. Then the space $C^m_b(\overline{\Omega})$ equipped with the usual norm $\|f\|_{C^m_b(\overline{\Omega})} \equiv \sum_{|\eta|\leq m} \sup_{\overline{\Omega}} |D^\eta f|$ is well known to be a Banach space. Let $f \in C^0(\overline{\Omega})$. Then we define its H\"{o}lder constant as
\begin{equation*}
|f : \Omega|_\alpha\equiv \mbox{sup} \left\{\frac{|f(x)-f(y)|}{|x-y|^\alpha} : x,y \in \overline{\Omega}, x \neq y \right\}.
\end{equation*}
We define the subspace of $C^0(\overline{\Omega})$ of H\"{o}lder continuous functions with exponent $\alpha \in ]0,1[$ by
$C^{0,\alpha}(\overline{\Omega}) \equiv \{f \in C^0(\overline{\Omega}) : \, |f : \Omega|_\alpha < \infty \}$. Similarly, the subspace of $C^m(\overline{\Omega})$ whose functions have $m$-th order derivatives that are H\"{o}lder continuous with exponent $\alpha \in ]0,1[$ is denoted $C^{m,\alpha}(\overline{\Omega})$. Then the space 
$C^{m,\alpha}_b(\overline{\Omega}) \equiv C^{m,\alpha}(\overline{\Omega}) \cap  C^m_b(\overline{\Omega}) \,,$
equipped with its usual norm $\|f\|_{C^{m,\alpha}_b(\overline{\Omega})} \equiv \|f\|_{C^{m}_b(\overline{\Omega})} + \sum_{|\eta|=m}{|D^\eta f : \Omega|_\alpha}$, is a Banach space. If $\Omega$ is bounded, then $C^{m,\alpha}_b(\overline{\Omega}) = C^{m,\alpha}(\overline{\Omega})$, and we omit the subscript $b$.
We denote by $C^{m,\alpha}_{\mathrm{loc}}(\R^n \setminus \Omega)$ the space of functions on $\R^n \setminus \Omega$ whose restriction to $\overline{U}$ belongs to $C^{m,\alpha}(\overline{U})$ for all open bounded subsets $U$ of $\R^n \setminus \Omega$. On $C^{m,\alpha}_{\mathrm{loc}}(\R^n \setminus \Omega)$ we consider the natural structure of Fr\'echet space. Finally if $\Omega$ is bounded, we set
\begin{equation*}
C^{m,\alpha}_{\mathrm{h}}(\overline{\Omega}) \equiv \{ u \in C^{m,\alpha}(\overline{\Omega}) \cap C^2(\Omega): \Delta u = 0 \text{ in } \Omega \},
\end{equation*} 
\begin{equation*}
\begin{split}
C^{m,\alpha}_{\mathrm{h}}(\mathbb{R}^n \setminus \Omega) \equiv \{ u \in C^{m,\alpha}(\mathbb{R}^n \setminus \Omega)&  \cap C^2(\mathbb{R}^n \setminus \overline{\Omega}):  \Delta u = 0 \text{ in } \mathbb{R}^n \setminus \overline{\Omega},   \\
& |u(x)| = O(|x|^{2-n}) \text{ as } x \to +\infty \}.
\end{split}
\end{equation*}
The condition $|u(x)| = O(|x|^{2-n})$  as $x \to +\infty$ in the above definition is equivalent for an harmonic function to the so-called harmonicity at infinity (see Folland \cite[Prop.~(2.74), p.~112]{Fo95}). We say that a bounded open subset of $\R^n$ is of class $C^{m,\alpha}$ if it is a manifold with boundary imbedded in $\R^n$ of class $C^{m,\alpha}$. In particular if $\Omega$ is a $C^{1,\alpha}$ subset of $\R^n$, then $\partial\Omega$ is a $C^{1,\alpha}$ sub-manifold of $\R^n$ of co-dimension $1$.  
If $M$ is a $C^{m,\alpha}$ sub-manifold of $\R^n$ of dimension $d\ge 1$, we define the space $C^{m,\alpha}(M)$ by exploiting a finite local parametrization. 
 We retain the standard definition of the Lebesgue spaces $L^p$, $p\ge 1$. If $\Omega$ is of class $C^{1,\alpha}$, we denote by $d\sigma$ the area element on $\partial\Omega$. If $Z$ is a subspace of $L^1(\partial \Omega)$, we set  
\[
Z_0 \equiv \left\{ f \in Z : \int_{\partial\Omega} f \,d\sigma = 0 \right\}.
\]
Then we introduce a notation for superposition operators: if $H$ is a function from $\partial \Omega^i \times\R\times \R$ to $\R$, then we denote by $\mathcal{N}_{H}$ the nonlinear nonautonomous superposition operator that take a pair $(h^1,h^2)$ of functions from $\partial\Omega^i$ to $\R$ to the function $\mathcal{N}_{H}(h^1,h^2)$ defined by
\begin{equation*}
\mathcal{N}_{H}(h^1,h^2)(x) \equiv H(x,h^1(x),h^2(x)) \quad\forall x \in \partial\Omega^i.
\end{equation*}
Here the letter ``$\mathcal{N}$'' stands for ``Nemytskii operator.'' Finally, we have the following   by Lanza de Cristoforis and Rossi \cite[Lemma 3.3, Prop. 3.13]{LaRo04}.

\begin{lemma}\label{lemmanotation}
	Let $\Omega^i$ be as in \eqref{introsetconditions} and let $\mathcal{A}_{\partial\Omega^i}$ be as in \eqref{A_Omega^i}. Let $\phi \in \mathcal{A}_{\partial\Omega^i}$. Then there exists a unique function $\tilde{\sigma}_n[\phi] \in C^{0,\alpha}(\partial\Omega^i)$ such that
	\[
	\int_{\phi(\partial\Omega^i)} f(y) \,d\sigma_y = \int_{\partial\Omega^i} f(\phi(s)) \, \tilde{\sigma}_n[\phi](s) \,d\sigma_s \quad\forall f \in L^1(\phi(\partial\Omega^i)).
	\]
	Moreover the map from $\mathcal{A}_{\partial\Omega^i}$ to $C^{0,\alpha}(\partial\Omega^i)$  that  takes $\phi$ to $\tilde{\sigma}_n[\phi]$ and the map from  $\mathcal{A}_{\partial\Omega^i}$ to $C^{0,\alpha}(\partial\Omega^i)$  that  takes $\phi$ to $\nu_{\Omega^i[\phi]}(\phi(\cdot))$ are real analytic.
\end{lemma}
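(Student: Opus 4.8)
The plan is to read off $\tilde{\sigma}_n[\phi]$ explicitly as a tangential Jacobian via the change-of-variables formula for surface integrals, and then to recognise both $\tilde{\sigma}_n[\phi]$ and $\nu_{\Omega^i[\phi]}(\phi(\cdot))$ as objects built out of $d\phi$ by algebraic operations followed by a superposition with a fixed real analytic scalar function. Real analyticity will then follow from the well-known fact that $C^{0,\alpha}(\partial\Omega^i)$ is a Banach algebra on which superposition by a real analytic function acts real analytically.

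First I would fix a finite atlas of local $C^{1,\alpha}$ parametrisations $\gamma_k : V_k \subset \R^{n-1} \to \partial\Omega^i$, together with a subordinate partition of unity, so that every integral over $\partial\Omega^i$ reduces to finitely many integrals over the $V_k$. In a single chart, writing $v_j[\phi] \equiv d\phi(\gamma_k)[\partial_j\gamma_k]$ for the tangent vectors to $\phi(\partial\Omega^i)$ and $G[\phi] \equiv (v_i[\phi]\cdot v_j[\phi])_{i,j}$ for the associated Gram matrix (so that $G[\phi_0]$ is the Gram matrix of $\partial\Omega^i$ itself), the classical change-of-variables formula gives, for $f$ supported in the image of one chart, $\int_{\phi(\partial\Omega^i)} f\,d\sigma = \int_{V_k} f(\phi(\gamma_k))\,(\det G[\phi])^{1/2}\,dt$. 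Pulling out the factor $(\det G[\phi_0])^{1/2}$, this identifies, chart by chart and hence globally,
\[
\tilde{\sigma}_n[\phi] = (\det G[\phi])^{1/2}\,(\det G[\phi_0])^{-1/2}.
\]
Uniqueness is immediate: if two continuous densities satisfy the stated identity for every $f \in L^1$, their difference integrates to zero against all $f\circ\phi$, and since $\phi$ is a bijection onto its image this forces them to coincide. Because $d\phi$ is injective, $\det G[\phi] > 0$ on the compact set $\partial\Omega^i$, so the right-hand side is a well-defined positive function; since $\phi \in C^{1,\alpha}$ makes each $v_j[\phi]$ of class $C^{0,\alpha}$ and $C^{0,\alpha}$ is an algebra closed under $\sqrt{\,\cdot\,}$ of strictly positive elements, we obtain $\tilde{\sigma}_n[\phi] \in C^{0,\alpha}(\partial\Omega^i)$.

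For the real analyticity I would argue that $\phi \mapsto d\phi$ is a bounded linear (hence real analytic) map, that each entry of $G[\phi]$ depends polynomially on the components of $d\phi(\gamma_k)$, so $\phi \mapsto \det G[\phi] \in C^{0,\alpha}$ is real analytic (a polynomial in elements of the Banach algebra $C^{0,\alpha}$), and that $\det G[\phi]$ stays bounded away from $0$ locally uniformly in $\phi$ by compactness. The superposition principle for the real analytic map $t \mapsto t^{1/2}$ on $]0,+\infty[$, valid on the Banach algebra $C^{0,\alpha}(\partial\Omega^i)$, then yields real analyticity of $\phi \mapsto (\det G[\phi])^{1/2}$, and hence of $\phi \mapsto \tilde{\sigma}_n[\phi]$ after multiplication by the fixed factor $(\det G[\phi_0])^{-1/2}$. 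For the unit normal I would use that an unnormalised normal field $N[\phi]$ to $\phi(\partial\Omega^i)$ is given in each chart by the generalised cross product of $v_1[\phi],\dots,v_{n-1}[\phi]$, that is, a vector whose components are the $(n-1)\times(n-1)$ minors of $(v_1[\phi]\,|\,\cdots\,|\,v_{n-1}[\phi])$; this is polynomial in $d\phi$ and satisfies $|N[\phi]| = (\det G[\phi])^{1/2}$. Hence $\nu_{\Omega^i[\phi]}(\phi(\cdot)) = \pm N[\phi]\,(\det G[\phi])^{-1/2}$, with a locally constant sign fixed by the requirement that $\phi_0$ return the outward normal, and real analyticity follows from the same superposition argument applied to $t \mapsto t^{-1/2}$.

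The main obstacle I anticipate is the clean justification of the superposition step: one must verify that the operator $g \mapsto F\circ g$ induced by a fixed real analytic $F$ on an interval is itself real analytic from the open subset of $C^{0,\alpha}(\partial\Omega^i)$ of functions whose range lies in that interval. This rests on expanding $F$ in a locally convergent power series and estimating the partial sums in the $C^{0,\alpha}$-norm through submultiplicativity of the H\"older norm; the compactness of $\partial\Omega^i$ guarantees that the range of $\det G[\phi]$ lies in a fixed compact subinterval of $]0,+\infty[$ on a neighbourhood of any given $\phi$, so that the radius of convergence is uniform there. A secondary bookkeeping point is chart-independence: the local formulas must patch to a single globally defined element of $C^{0,\alpha}(\partial\Omega^i)$, which is guaranteed by the uniqueness already established and by the intrinsic geometric nature of both $\tilde{\sigma}_n[\phi]$ and the normal.
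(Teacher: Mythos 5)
Your proposal is correct, but note that the paper itself does not prove this lemma at all: it is quoted verbatim from Lanza de Cristoforis and Rossi \cite[Lemma 3.3, Prop.~3.13]{LaRo04}, so there is no in-paper argument to compare against. What you have written is essentially a self-contained reconstruction of the standard proof underlying that citation: identify $\tilde{\sigma}_n[\phi]$ chart-by-chart as the ratio of Gram determinants $(\det G[\phi])^{1/2}(\det G[\phi_0])^{-1/2}$, identify the normal via the generalised cross product of the columns of $d\phi$ composed with a local parametrization, observe that both are algebraic expressions in the (linearly, hence analytically, dependent) entries of $d\phi$, and conclude by real analyticity of the superposition operators $g\mapsto g^{1/2}$ and $g\mapsto g^{-1/2}$ on the set of strictly positive elements of the Banach algebra $C^{0,\alpha}(\partial\Omega^i)$. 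The one step that deserves slightly more than the assertion you give it is the sign of the normal: you must check that the choice of sign making $\pm N[\phi](\det G[\phi])^{-1/2}$ the \emph{outward} normal to $\Omega^i[\phi]$ is locally constant in $\phi$, which requires knowing that the bounded component of $\R^n\setminus\phi(\partial\Omega^i)$ (and hence the side on which it lies) deforms stably under small $C^{1,\alpha}$ perturbations of $\phi$ — a consequence of the Jordan--Leray separation theorem already invoked in the paper. Since real analyticity is a local property of the map $\phi\mapsto\nu_{\Omega^i[\phi]}(\phi(\cdot))$, this locally constant sign suffices, and your proof goes through.
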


\section{Some preliminaries of potential theory}\label{Preliminaries}

As we have mentioned, a key point of the  Functional Analytic Approach is the reformulation of the boundary value problem in terms of an equivalent integral equation. To this aim, we exploit representation formulas for harmonic functions in terms of layer potentials. In this section, we collect some classical results of potential theory. We do not present proofs that can be found, for example, in Folland \cite[Chap. 3]{Fo95}, in Gilbarg and
Trudinger \cite[Sec. 2]{GiTr83}. 
\begin{defin}
	We denote by $S_n$ the function from $\R^n \setminus \{0\}$ to $\R$ defined by
	\begin{equation*}
	S_n(x) \equiv 
	\begin{cases}
	\frac{1}{s_n} \log |x| & \forall x \in \R^n \setminus \{0\} \qquad \mbox{if } n=2 \\
	\frac{1}{(2-n) s_n} |x|^{2-n} & \forall x \in \R^n \setminus \{0\} \qquad \mbox{if } n>2
	\end{cases}
	\end{equation*}
	where $s_n$ denotes the $(n-1)$-dimensional measure of $\partial B_n(0,1)$.
\end{defin}    

$S_n$ is well known to be a fundamental solution of the Laplace operator $\Delta=\sum_{j=1}^n\partial^2_{x_j}$.
We now assume that
\[
 \text{$\Omega$ is an open bounded subset of $\R^n$ of class $C^{1,\alpha}$}. 
\]
 In the following definition we introduce the single layer potential, which we use to transform our problems into integral equations.

\begin{defin}
	We denote by $v_{\Omega}[\mu]$ the single layer potential with density $\mu$, {i.e.} the function defined by
	\begin{equation*}
	v_{\Omega}[\mu](x) \equiv \int_{\partial \Omega}{S_n(x-y) \mu(y) \,d\sigma_y} \qquad \forall x \in \R^n \, , \forall \mu \in L^2(\partial\Omega)\, .
	\end{equation*}
\end{defin}
It is well known that if $\mu \in C^{0,\alpha}(\partial\Omega)$, then $v_{\Omega}[\mu] \in C^0(\R^n)$. We set 
\begin{equation*}
v^+_{\Omega}[\mu]\equiv v_{\Omega}[\mu]_{| \overline{\Omega}}, \qquad v^-_{\Omega}[\mu] \equiv v_{\Omega}[\mu]_{| \R^n \setminus \Omega}.
\end{equation*}

Then we define the boundary integral operators associated to the trace of  the single layer potential and its normal derivative.

\begin{defin}\label{defV-W}
	We denote by $V_{\partial\Omega}$ the operator from $L^2(\partial\Omega)$ to itself  that  takes $\mu$ to the function $V_{\partial\Omega}[\mu]$ defined in the trace sense by
	\[
	V_{\partial\Omega}[\mu]\equiv v_{\Omega}[\mu]_{|\partial\Omega}.
	\]
	We denote by $W_{\partial\Omega}$ the integral operator from $L^2(\partial \Omega)$ to itself defined by 
	\begin{equation*}
	W_{\partial\Omega}[\mu](x) \equiv - \int_{\partial \Omega}\! \!{\nu_\Omega(y) \cdot \nabla S_n(x-y) \mu(y) \,d\sigma_y} \ \  \text{for a.e. } x \in \partial \Omega, \forall \mu \in L^2(\partial\Omega).
	\end{equation*}
	We denote by $W^\ast_{\partial\Omega}$ the  integral operator from $L^2(\partial \Omega)$ to itself which is the transpose of $W_{\partial\Omega}$ and  that  is defined by
	\[
	W^\ast_{\partial\Omega}[\mu](x) \equiv \int_{\partial \Omega}\! \!{\nu_\Omega(x) \cdot \nabla S_n(x-y) \mu(y) \,d\sigma_y} \ \  \mbox{for a.e. } x \in \partial \Omega, \forall \mu \in L^2(\partial\Omega).
	\]
\end{defin}

As it is well known, since $\Omega$ is of class $C^{1,\alpha}$, $W_{\partial\Omega}$ and $W^\ast_{\partial\Omega}$ are compact operators from $L^2(\partial\Omega)$ to itself (both display a weak singularity). In particular $\left( \pm\frac{1}{2} I + W^\ast_{\partial\Omega} \right)$ are Fredholm operators of index $0$ from $L^2(\partial\Omega)$ to itself. Moreover, one verifies that $W_{\partial\Omega}: C^{1,\alpha}(\partial\Omega) \to C^{1,\alpha}(\partial\Omega)$ and $W^\ast_{\partial\Omega}: C^{0,\alpha}(\partial\Omega) \to C^{0,\alpha}(\partial\Omega)$ are transpose to one another with respect to the duality of $C^{1,\alpha}(\partial\Omega) \times C^{0,\alpha}(\partial\Omega)$ induced by the inner product of $L^2(\partial\Omega)$. We collect some well known properties of the single layer potential in the theorem below. In particular, we note that the operator of statement (iv) is an isomorphism both in the case of dimension $n=2$ and $n \geq 3$ (see, {e.g.}, \cite[Theorem 6.47]{DaLaMu21}).

\begin{teo}[Properties of the single layer potential]\label{sdp}
	The following statements hold.
	\begin{enumerate}
		\item[(i)] For all $\mu \in L^2(\partial\Omega)$, the function $v_{\Omega}[\mu]$ is harmonic in $\R^n\setminus \partial\Omega$. If $n\geq3$ or if $n=2$ and $\int_{\partial \Omega}\mu\, d\sigma=0$ then $v_{\Omega}[\mu]$ is also harmonic at infinity.
		
		\item[(ii)] If $\mu \in C^{0,\alpha}(\partial\Omega)$, then $v^+_{\Omega}[\mu] \in C^{1,\alpha}(\overline{\Omega})$ and the map from $C^{0,\alpha}(\partial\Omega)$ to $C^{1,\alpha}(\overline{\Omega})$  that  takes $\mu$ to $v^+_{\Omega}[\mu]$ is linear and continuous. Moreover, $v^-_{\Omega}[\mu] \in C^{1,\alpha}_{\mathrm{loc}}(\mathbb{R}^n \setminus \Omega)$ and the map from $C^{0,\alpha}(\partial \Omega)$ to $C^{1,\alpha}_{\mathrm{loc}}(\mathbb{R}^n \setminus \Omega)$  that  takes $\mu$ to $v^-_{\Omega}[\mu]$ is linear and continuous.
		
		\item[(iii)] If $\mu \in C^{0,\alpha}(\partial\Omega)$, then we have following jump relations
		\begin{equation*}
		\nu_\Omega(x) \cdot \nabla v^\pm_{\Omega}[\mu] (x) = \left( \mp \frac{1}{2} I + W^\ast_{\partial\Omega} \right)[\mu](x) \qquad \forall x \in \partial \Omega.
		\end{equation*}	
		
		\item[(iv)] The map from $C^{0,\alpha}(\partial\Omega)_0 \times \R$ to $C^{0,\alpha}(\partial\Omega)$  that  takes a pair $(\mu,\rho)$ to $V_{\partial\Omega}[\mu] + \rho$ is an isomorphism.
	\end{enumerate}
\end{teo}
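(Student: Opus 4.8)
The plan is to treat the four statements by the classical machinery of layer potentials, since each is a standard fact whose proof can be assembled from the references (Folland \cite{Fo95}, Gilbarg and Trudinger \cite{GiTr83}, and, for the sharp mapping and isomorphism properties, \cite{DaLaMu21}). For statement (i) I would start from the fact that $x\mapsto S_n(x-y)$ is harmonic in $\R^n\setminus\{y\}$ and that, for $x$ ranging in a compact subset $K$ of $\R^n\setminus\partial\Omega$, the kernel $S_n(x-y)$ together with all its $x$-derivatives is bounded uniformly in $y\in\partial\Omega$; this lets me differentiate under the integral sign and conclude $\Delta v_\Omega[\mu]=0$ in $\R^n\setminus\partial\Omega$. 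For the behaviour at infinity I would inspect the decay of the kernel: when $n\ge 3$ one has $|S_n(x-y)|=O(|x|^{2-n})$ uniformly for $y\in\partial\Omega$, which already yields $|v_\Omega[\mu](x)|=O(|x|^{2-n})$, i.e.\ harmonicity at infinity in the sense recalled after the definition of $C^{m,\alpha}_{\mathrm h}(\R^n\setminus\Omega)$; when $n=2$ the leading term of $S_2(x-y)=\frac{1}{s_2}\log|x-y|$ is $\frac{1}{s_2}(\log|x|)\int_{\partial\Omega}\mu\,d\sigma$, so the logarithmic growth is killed precisely by the condition $\int_{\partial\Omega}\mu\,d\sigma=0$ and the remainder decays.

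Statements (ii) and (iii) form the technical heart. For (ii) I would reduce the $C^{1,\alpha}$ regularity up to the boundary to H\"older continuity of $\nabla v_\Omega[\mu]$ on each side: the tangential components of the trace are controlled by differentiating the weakly singular kernel $S_n$ along $\partial\Omega$, while the normal component is governed by the operator $\pm\frac12 I+W^\ast_{\partial\Omega}$ of statement (iii). The hard part will be showing that these boundary integral operators map $C^{0,\alpha}(\partial\Omega)$ continuously into itself and that the two one-sided boundary limits exist and are H\"older continuous; this rests on Cauchy/Calder\'on--Zygmund type estimates for singular integrals on a hypersurface, and it is exactly the $C^{1,\alpha}$ regularity of $\Omega$ that renders the kernel $\nu_\Omega(y)\cdot\nabla S_n(x-y)$ only weakly singular. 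Statement (iii) itself I would then obtain by the classical jump computation, approximating the normal derivative from the two sides and isolating the solid-angle contribution at the singularity, which produces the $\mp\frac12 I$ term while the principal value gives $W^\ast_{\partial\Omega}[\mu]$.

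Finally, for the isomorphism in (iv) I would argue by Fredholm theory together with a uniqueness lemma. For injectivity, suppose $V_{\partial\Omega}[\mu]+\rho=0$. Then the interior single layer $v^+_\Omega[\mu]$ is harmonic in $\Omega$ with constant boundary value $-\rho$, so by interior uniqueness $v^+_\Omega[\mu]\equiv-\rho$ and $\nu_\Omega\cdot\nabla v^+_\Omega[\mu]=0$; on the exterior side $v^-_\Omega[\mu]$ is the unique harmonic function with boundary value $-\rho$ and the decay of statement (i) (where the restriction $\mu\in C^{0,\alpha}(\partial\Omega)_0$ is needed for $n=2$). Combining this with the jump relation of (iii), which gives $\mu=\nu_\Omega\cdot\nabla v^-_\Omega[\mu]-\nu_\Omega\cdot\nabla v^+_\Omega[\mu]$, and integrating over $\partial\Omega$, the zero-mean constraint on $\mu$ forces $\rho=0$ and hence $\mu=0$. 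Surjectivity would follow from the fact that $V_{\partial\Omega}$, corrected by the one-dimensional constant supplied by the $\R$ factor, is a Fredholm operator of index $0$ (the relevant compactness coming again from the weak singularity of the kernel), so that injectivity upgrades to bijectivity. The conceptual subtlety I expect to flag here is the planar case: for $n=2$ the operator $V_{\partial\Omega}$ alone need not be invertible because of logarithmic capacity, and the role of the added constant $\rho$ together with the zero-mean restriction $C^{0,\alpha}(\partial\Omega)_0$ is exactly to restore bijectivity uniformly in $n\ge 2$, which is the content of \cite[Theorem 6.47]{DaLaMu21} that I would invoke to conclude.
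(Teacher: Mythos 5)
The paper itself offers no proof of this theorem: it is presented as a collection of classical facts, with (i)--(iii) deferred to Folland \cite{Fo95} and Gilbarg--Trudinger \cite{GiTr83}, and (iv) to \cite[Theorem 6.47]{DaLaMu21}. Your sketches of (i)--(iii) follow exactly the classical route of those references (differentiation under the integral sign and kernel decay for (i); the solid-angle jump computation and the observation that the $C^{1,\alpha}$ regularity of $\partial\Omega$ makes $\nu_\Omega(y)\cdot\nabla S_n(x-y)$ only weakly singular for (ii)--(iii)), and your injectivity argument for (iv) is the standard potential-theoretic one, except that for $n\geq 3$ the step ``integrating over $\partial\Omega$ forces $\rho=0$'' silently uses the nontrivial fact that the flux of the exterior equilibrium potential (the capacity) does not vanish; an energy integral over the exterior domain is a cleaner way to close that step.

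The genuine gap is your surjectivity argument in (iv). You assert that $(\mu,\rho)\mapsto V_{\partial\Omega}[\mu]+\rho$ is Fredholm of index $0$, ``the relevant compactness coming again from the weak singularity of the kernel.'' This is backwards. The weak singularity makes $V_{\partial\Omega}$ a \emph{smoothing}, first-kind operator: by statement (ii) it maps $C^{0,\alpha}(\partial\Omega)$ into $C^{1,\alpha}(\partial\Omega)$, and since the embedding $C^{1,\alpha}(\partial\Omega)\hookrightarrow C^{0,\alpha}(\partial\Omega)$ is compact, $V_{\partial\Omega}$ is itself compact as an operator into $C^{0,\alpha}(\partial\Omega)$; adding the finite-rank term $\rho$ keeps the whole map compact, and a compact operator between infinite-dimensional Banach spaces is never Fredholm. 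Unlike $\pm\frac{1}{2}I+W^\ast_{\partial\Omega}$, there is no ``identity plus compact'' structure here, so injectivity cannot be upgraded to bijectivity this way. Indeed, surjectivity onto $C^{0,\alpha}(\partial\Omega)$ is outright false, precisely because the range is contained in $C^{1,\alpha}(\partial\Omega)$: the codomain for which the isomorphism holds --- and the one appearing in \cite[Theorem 6.47]{DaLaMu21}, which is how statement (iv) should be read --- is $C^{1,\alpha}(\partial\Omega)$. With that codomain, surjectivity requires a genuinely different argument: for instance, given $f\in C^{1,\alpha}(\partial\Omega)$, solve the interior and exterior Dirichlet problems with datum $f$ (via double layer potentials and second-kind Fredholm theory), set $\mu=\nu_\Omega\cdot\nabla u^{-}-\nu_\Omega\cdot\nabla u^{+}$, identify $v_\Omega[\mu]$ with the glued solution by a transmission-uniqueness argument, and correct by the equilibrium density so that $\int_{\partial\Omega}\mu\,d\sigma=0$. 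Your closing fallback of invoking \cite[Theorem 6.47]{DaLaMu21} is legitimate --- it is what the paper does --- but then the Fredholm discussion must be removed, since as written it asserts something false.
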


Since $\Omega$ is of class $C^{1,\alpha}$, the following classical compactness result holds (cf.~Schauder \cite{Sc31,Sc32}).

\begin{teo}\label{Schaudercompact}
	The map  that  takes $\mu$ to $W^\ast_{\partial\Omega}[\mu]$ is compact from $C^{0,\alpha}(\partial\Omega)$ to itself.
\end{teo}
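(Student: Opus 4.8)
The plan is to exploit that $W^\ast_{\partial\Omega}$ is an integral operator with \emph{weakly singular} kernel and to realize it as an operator-norm limit of operators with continuous kernel, each of which is compact on $C^{0,\alpha}(\partial\Omega)$. Writing $K(x,y)\equiv\nu_\Omega(x)\cdot\nabla S_n(x-y)=\frac{1}{s_n}\frac{\nu_\Omega(x)\cdot(x-y)}{|x-y|^n}$, the decisive input is the geometric estimate coming from the $C^{1,\alpha}$ regularity of $\partial\Omega$, namely $|\nu_\Omega(x)\cdot(x-y)|\le C|x-y|^{1+\alpha}$ for all $x,y\in\partial\Omega$; hence $|K(x,y)|\le C'|x-y|^{-(n-1)+\alpha}$, so that $K$ is weakly singular on the $(n-1)$-dimensional manifold $\partial\Omega$ (the singular exponent $n-1-\alpha$ being strictly less than $n-1$). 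I note that the mere boundedness of $W^\ast_{\partial\Omega}$ on $C^{0,\alpha}(\partial\Omega)$ can already be read off from Theorem \ref{sdp} (ii)--(iii), but that route produces no gain and hence no compactness, so a genuine analysis of the kernel is unavoidable.

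First I would record that the inclusion $C^{0,\alpha}(\partial\Omega)\hookrightarrow C^0(\partial\Omega)$ is compact, which holds by the Ascoli--Arzel\`a theorem since $\partial\Omega$ is compact. Next, for $\epsilon>0$ I would split $K=K_\epsilon^{\mathrm{reg}}+K_\epsilon^{\mathrm{sing}}$ by means of a cut-off vanishing on $\{|x-y|<\epsilon\}$, and denote by $W^\ast_\epsilon$ and $R_\epsilon$ the corresponding operators. The kernel $K_\epsilon^{\mathrm{reg}}(\cdot,y)$ is continuous and, for fixed $y$, uniformly $\alpha$-H\"older in $x$ (its only non-smooth factor is $\nu_\Omega(\cdot)\in C^{0,\alpha}$, the denominators being bounded below on the support). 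A direct estimate then shows that $W^\ast_\epsilon$ maps $C^0(\partial\Omega)$ boundedly into $C^{0,\alpha}(\partial\Omega)$; composing the compact inclusion $C^{0,\alpha}(\partial\Omega)\hookrightarrow C^0(\partial\Omega)$ with the bounded map $W^\ast_\epsilon\colon C^0(\partial\Omega)\to C^{0,\alpha}(\partial\Omega)$ shows that each $W^\ast_\epsilon$ is compact from $C^{0,\alpha}(\partial\Omega)$ into itself.

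It then remains to prove that $\|R_\epsilon\|_{\mathcal{L}(C^{0,\alpha}(\partial\Omega))}\to 0$ as $\epsilon\to0^+$, and this borderline estimate is where I expect the main difficulty to lie, precisely because the gain $\alpha$ in the kernel equals the H\"older exponent in which we work. The sup-norm of $R_\epsilon[\mu]$ is easily bounded by $C\epsilon^\alpha\|\mu\|_{C^0(\partial\Omega)}$, since $\int_{|x-y|<\epsilon}|x-y|^{-(n-1)+\alpha}\,d\sigma_y\le C\epsilon^\alpha$. The delicate point is the H\"older seminorm of $R_\epsilon[\mu]$: here I would write $R_\epsilon[\mu](x)=\mu(x)\int_{|x-y|<\epsilon}K(x,y)\,d\sigma_y+\int_{|x-y|<\epsilon}K(x,y)\bigl(\mu(y)-\mu(x)\bigr)\,d\sigma_y$ in order to exploit the cancellation $|\mu(y)-\mu(x)|\le |\mu:\partial\Omega|_\alpha\,|x-y|^\alpha$, which renders the second integrand of order $|x-y|^{-(n-1)+2\alpha}$ and hence even more integrable, and then estimate the two pieces for a pair $x,x'$ by splitting $\partial\Omega$ into the near region $\{|x-y|<2|x-x'|\}$ and its complement, using on the latter the H\"older-in-$x$ regularity of $K$. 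Each contribution carries a positive power of $\epsilon$, yielding a bound $\|R_\epsilon[\mu]\|_{C^{0,\alpha}(\partial\Omega)}\le C\,\eta(\epsilon)\,\|\mu\|_{C^{0,\alpha}(\partial\Omega)}$ with $\eta(\epsilon)\to0$.

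Granted this, $W^\ast_{\partial\Omega}=\lim_{\epsilon\to0^+}W^\ast_\epsilon$ in the operator norm of $\mathcal{L}(C^{0,\alpha}(\partial\Omega))$; since each $W^\ast_\epsilon$ is compact and the compact operators form a closed subspace, $W^\ast_{\partial\Omega}$ is compact as well. This is the classical line of Schauder \cite{Sc31,Sc32}, the underlying kernel bounds being of the type collected in Folland \cite{Fo95} and in Gilbarg and Trudinger \cite{GiTr83}.
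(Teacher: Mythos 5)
The paper does not actually prove Theorem \ref{Schaudercompact}: it quotes it as a classical result of Schauder \cite{Sc31,Sc32}. Your attempt to supply a real proof by kernel truncation is therefore a genuinely different route, but it has a gap at exactly the step you flag as delicate, and the gap is not repairable: the claimed convergence $\|R_\epsilon\|_{\mathcal{L}(C^{0,\alpha}(\partial\Omega))}\to 0$ is in general \emph{false} when $\partial\Omega$ is of class $C^{1,\alpha}$ and no better. To see this, test the constant density $\mu\equiv 1$: then $R_\epsilon[1](x)=\int_{\partial\Omega}K(x,y)\bigl(1-\eta(|x-y|/\epsilon)\bigr)\,d\sigma_y$ is the near-diagonal part of $W^\ast_{\partial\Omega}[1]$, and your decomposition offers no cancellation whatsoever for it, since the factor $\mu(y)-\mu(x)$ vanishes identically and the whole expression is of the first, ``bad'' type. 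Its sup-norm is indeed $O(\epsilon^\alpha)$, but its $\alpha$-H\"older seminorm is generically of order $1$: on a boundary that is genuinely $C^{1,\alpha}$ the near-tangency bound $|\nu_\Omega(x)\cdot(x-y)|\le C|x-y|^{1+\alpha}$ is saturated, so $R_\epsilon[1]$ has amplitude comparable to $\epsilon^\alpha$ while varying on the length scale $\epsilon$, whence H\"older quotients at distance $|x-x'|\approx\epsilon$ are of size $\epsilon^\alpha/\epsilon^\alpha\approx 1$. For instance, for $n=2$ and $\partial\Omega$ locally the graph of $f(t)=|t|^{1+\alpha}$, one computes $R_\epsilon[1]\approx-\epsilon^\alpha/(\pi\alpha)$ at the point $(0,0)$ and $\approx c_\alpha\,\epsilon^\alpha$ with a different constant $c_\alpha$ at the point $(\epsilon,f(\epsilon))$, so the quotient over this pair stays bounded away from zero as $\epsilon\to 0^+$; consequently $\|W^\ast_{\partial\Omega}-W^\ast_\epsilon\|_{\mathcal{L}(C^{0,\alpha}(\partial\Omega))}\not\to 0$ and compactness of the approximants cannot be passed to the limit. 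Your own far-region estimate already signals this: the only kernel regularity available on a $C^{1,\alpha}$ boundary is $|K(x,y)-K(x',y)|\le C|x-x'|^\alpha|x-y|^{-(n-1)}$, whose weight $|x-y|^{-(n-1)}$ is the critical, non-integrable power, so without the factor $\mu(y)-\mu(x)$ the far integral produces $\log\bigl(\epsilon/|x-x'|\bigr)$ --- neither small nor even bounded --- while the near-region contribution to the term $\mu(x)\int_{|x-y|<\epsilon}K(x,y)\,d\sigma_y$ is $C|x-x'|^\alpha$ with $C$ independent of $\epsilon$.

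The obstruction is structural, which is precisely why the theorem is delicate. Your scheme does work when the kernel gain strictly exceeds the H\"older exponent (e.g.\ $\partial\Omega$ of class $C^{1,\beta}$ with $\beta>\alpha$, or $C^{2}$): there $\|R_\epsilon\|\le C\epsilon^{\beta-\alpha}\to0$. At the critical exponent $\beta=\alpha$, however, the term $\mu(x)\int_{|x-y|<\epsilon}K(x,y)\,d\sigma_y$ requires a cancellation that $W^\ast_{\partial\Omega}$ simply does not possess: unlike the double layer $W_{\partial\Omega}$, for which the Gauss identity makes $\int_{\partial\Omega}\nu_\Omega(y)\cdot\nabla S_n(x-y)\,d\sigma_y$ constant in $x$, there is no identity for $W^\ast_{\partial\Omega}[1]$. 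A correct proof must use a different mechanism: either Schauder's original potential-theoretic estimates \cite{Sc31,Sc32}, or, for example, one first exploits the Gauss cancellation to show that $W_{\partial\Omega}$ maps $C^{0,\beta}(\partial\Omega)$ boundedly into $C^{0,\alpha}(\partial\Omega)$ for $0<\beta<\alpha$, obtains compactness of $W_{\partial\Omega}$ on $C^{0,\alpha}(\partial\Omega)$ by factoring through the compact embedding $C^{0,\alpha}(\partial\Omega)\hookrightarrow C^{0,\beta}(\partial\Omega)$, and then transfers the property to $W^\ast_{\partial\Omega}$ via the $C^{1,\alpha}(\partial\Omega)\times C^{0,\alpha}(\partial\Omega)$ transposition structure recalled after Definition \ref{defV-W} (cf.\ Miranda \cite{Mi70} and \cite{DaLaMu21}) --- in any case, not by operator-norm truncation of the kernel.
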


Theorem \ref{Schaudercompact} implies that $\left( \pm\frac{1}{2} I + W^\ast_{\partial\Omega} \right)$ are Fredholm operators of index $0$ from $C^{0,\alpha}(\partial\Omega)$ into itself.  We now collect some regularity results for integral operators. We first introduce the following (see Folland \cite[Chap. 3 \S B]{Fo95}).
\begin{defin}\label{kerneldef}
	Let $K$ be a measurable function from $\partial\Omega \times \partial\Omega$ to $\R$ and let $0 \leq \beta < n-1$. We say that $K$ is a continuous kernel of order $\beta$ if
	\begin{equation*}
	K(x,y) = k(x,y) |x-y|^{-\beta} \quad \forall(x,y)\in \partial\Omega \times \partial\Omega,
	\end{equation*}
	for some continuous function $k$ on $\partial\Omega \times \partial\Omega$.
	\\
	If $K$ is a continuous kernel of order $\beta$, we denote by  $\mathcal{K}_K$ the integral operator from $L^2(\partial\Omega)$ to itself defined by
	\begin{equation*}
	\mathcal{K}_K [\mu] (x) \equiv \int_{\partial\Omega} K(x,y) \mu (y) \,d\sigma_y \qquad \text{for a.e. } x \in \partial \Omega\, , \forall \mu \in L^2(\partial\Omega)\, .
	\end{equation*}
\end{defin}

We observe that the functions $K_1(x,y) \equiv S_n(x-y)$ and $K_2(x,y) \equiv \nu_\Omega(y) \cdot \nabla S_n(x-y)$ of $(x,y) \in \partial\Omega\times \partial \Omega$, $x \neq y$, are continuous kernels of order $n-2$ (cf. Folland \cite[Prop. 3.17]{Fo95}). Clearly, we can extend the notion of integral operator with a continuous kernel to the vectorial case  just applying the definition above component-wise. Then we present a vectorial version of a classical regularity result (see, for example, Folland \cite[Prop. 3.13]{Fo95}).

\begin{teo}\label{regularityvecttheorem}
	Let $0\leq \beta < n-1$. Let $K_i^j$ with $i,j \in \{1,2\}$ be continuous kernels of order $\beta$. Let $\mathcal{K}=(\mathcal{K}_1,\mathcal{K}_2)$ be the operator from $(L^2(\partial\Omega))^2$ to itself  defined by
\[
	\mathcal{K}_1 [\mu_1,\mu_2] = \mathcal{K}_{K_1^1}[\mu_1] +  \mathcal{K}_{K_2^1}[\mu_2],\qquad
	\mathcal{K}_2 [\mu_1,\mu_2] = \mathcal{K}_{K_1^2}[\mu_1] +  \mathcal{K}_{K_2^2}[\mu_2], 
\] 
	for all $(\mu_1,\mu_2) \in (L^2(\partial\Omega))^2$. If $(I + \mathcal{K})[\mu_1,\mu_2]  \in (C^{0}(\partial\Omega))^2$,	then $(\mu_1,\mu_2)  \in (C^{0}(\partial\Omega))^2$.
\end{teo}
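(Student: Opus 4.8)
The plan is to prove the statement by a regularity bootstrap on the \emph{integrability} of the densities, exploiting the smoothing properties of weakly singular integral operators together with the very equation $(I+\mathcal{K})[\mu_1,\mu_2]=(f_1,f_2)$, where I set $(f_1,f_2) \equiv (I+\mathcal{K})[\mu_1,\mu_2] \in (C^0(\partial\Omega))^2$. The guiding idea is that each application of one of the operators $\mathcal{K}_{K_i^j}$ produces a fixed gain in integrability; since $f_1,f_2$ are already continuous (hence lie in every $L^p$), solving for $\mu_i$ in terms of $f_i$ and of the operator part lets me feed this gain back into $(\mu_1,\mu_2)$ and iterate until the operator part is forced to be continuous. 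This is precisely the mechanism behind the scalar version of the result in Folland \cite[Prop.~3.13]{Fo95}, run here on the coupled $2\times 2$ system; note that the coupling prevents a naive decoupling of the two equations, since the cross terms $\mathcal{K}_{K_2^1}[\mu_2]$ and $\mathcal{K}_{K_1^2}[\mu_1]$ are only known to be in $L^2$ at the outset.

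To make this precise I would first record the classical mapping properties of an operator $\mathcal{K}_K$ associated with a continuous kernel $K$ of order $\beta$, all contained in the material of Folland \cite[Chap.~3 \S B]{Fo95}. Set $d \equiv n-1$, the dimension of $\partial\Omega$, and $\gamma \equiv (d-\beta)/d$, which is strictly positive because $0 \le \beta < n-1 = d$. Since $|x-y|^{-\beta}$ is integrable on $\partial\Omega$ in each variable and $k$ is bounded, Schur's test gives that $\mathcal{K}_K$ is bounded on $L^p(\partial\Omega)$ for every $p \in [1,\infty]$; the fractional-integration (Hardy--Littlewood--Sobolev) estimate for the Riesz-type kernel $|x-y|^{-\beta}$ on the $d$-dimensional manifold $\partial\Omega$ yields the gain: $\mathcal{K}_K$ maps $L^p(\partial\Omega)$ boundedly into $L^q(\partial\Omega)$ whenever $\frac1q = \frac1p - \gamma > 0$. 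Finally, as soon as $\frac1p < \gamma$, so that $|x-y|^{-\beta}$ lies in $L^{p'}$ in the $y$ variable, H\"older's inequality bounds $\mathcal{K}_K[\mu]$ in $L^\infty$, and the continuity of $k$ together with a dominated-convergence argument upgrades this to $\mathcal{K}_K[\mu] \in C^0(\partial\Omega)$. The same statements hold for each of the four entries $K_i^j$, since all of them have the same order $\beta$.

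With these tools the bootstrap is immediate. I would start from $(\mu_1,\mu_2) \in (L^2(\partial\Omega))^2$, set $\frac{1}{p_0} = \frac12$, and use the two identities $\mu_i = f_i - \mathcal{K}_{K_1^i}[\mu_1] - \mathcal{K}_{K_2^i}[\mu_2]$ for $i=1,2$. Since $f_i \in C^0 \subset L^\infty$ and each operator gains $\gamma$ in integrability, I obtain $(\mu_1,\mu_2) \in (L^{p_1})^2$ with $\frac{1}{p_1} = \frac{1}{p_0} - \gamma$, interpreting this as ``every finite $p$'' once the right-hand side becomes non-positive. Iterating, after $m$ steps one has $\frac{1}{p_m} = \frac12 - m\gamma$, which drops below $\gamma$ after finitely many steps because $\gamma$ is a fixed positive number. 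At that stage the last mapping property forces $\mathcal{K}_{K_1^i}[\mu_1] + \mathcal{K}_{K_2^i}[\mu_2] \in C^0(\partial\Omega)$, whence $\mu_i = f_i - \mathcal{K}_{K_1^i}[\mu_1] - \mathcal{K}_{K_2^i}[\mu_2] \in C^0(\partial\Omega)$ for $i=1,2$, which is the assertion.

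The routine part is the arithmetic of the exponents; the genuinely delicate point, and the one I would treat with care, is the final step, namely passing from an $L^\infty$ bound on the operator part to actual \emph{continuity}. Boundedness follows at once from H\"older's inequality, but membership in $C^0(\partial\Omega)$ is not automatic for an $L^\infty$ density: it must be extracted from the structure $K(x,y) = k(x,y)|x-y|^{-\beta}$ with $k$ continuous, by splitting off a small surface ball around the singularity and combining the uniform continuity of $k$ off the diagonal with the uniform integrability of $|x-y|^{-\beta}$ to control the near-diagonal contribution. One also has to handle the borderline exponent $\frac{1}{p_m}=0$ in the Hardy--Littlewood--Sobolev step, which is done by taking $p_m$ large but finite and performing one additional application of the operator to land in $C^0(\partial\Omega)$.
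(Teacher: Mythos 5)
Your proof is correct, but it takes a genuinely different route from the paper's. The paper does not prove Theorem \ref{regularityvecttheorem} directly: it presents it as a vectorial version of a classical scalar result and disposes of it by citation to Folland \cite[Prop.~3.13]{Fo95}, whose argument --- the same ``standard argument on iterated kernels'' invoked for Theorem \ref{regularitytheorem} via Dalla Riva and Mishuris \cite[Lem.~3.3]{DaMi15} --- runs as follows: from $(\mu_1,\mu_2) = (f_1,f_2) - \mathcal{K}[\mu_1,\mu_2]$ one iterates to get $(\mu_1,\mu_2) = \sum_{j=0}^{N-1}(-\mathcal{K})^j[f_1,f_2] + (-\mathcal{K})^N[\mu_1,\mu_2]$; each composition of (matrix) kernels of order $\beta$ lowers the order of the iterated kernel by a fixed amount, so for $N$ large $\mathcal{K}^N$ has a continuous kernel and maps $(L^2(\partial\Omega))^2$ into $(C^0(\partial\Omega))^2$, while $\mathcal{K}$ maps $(C^0(\partial\Omega))^2$ into itself, and continuity of $(\mu_1,\mu_2)$ follows; the vector-valued case needs nothing beyond running the composition estimates on $2\times 2$ matrix kernels. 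Your argument instead bootstraps the integrability of the densities themselves through the fractional-integration gain $L^p \to L^q$, $1/q = 1/p - (d-\beta)/d$ with $d = n-1$, and closes with the H\"older-plus-uniform-integrability argument upgrading boundedness to continuity. Both proofs are sound, and you correctly isolate the two delicate points of yours (the continuity upgrade and the borderline exponent). What each buys: the iterated-kernel route stays entirely within the $L^2$--$C^0$ framework, needs no $L^p$ theory, and is why the paper can settle the matter by reference; your route requires the Hardy--Littlewood--Sobolev estimate on the compact $C^{1,\alpha}$ hypersurface $\partial\Omega$ --- a standard but not entirely free fact, usually justified in local charts or via spaces of homogeneous type, and which you should at least flag as requiring transfer from $\R^d$ --- but in exchange it handles the $2\times 2$ coupling completely transparently (both components improve in lockstep, no matrix-kernel composition lemma needed) and yields the stronger intermediate information that the densities lie in every finite $L^p$.
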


Finally, we present in Theorem \ref{regularitytheorem}  a regularity result that will be widely used in what follows. The proof exploits a standard argument on iterated kernels and can be found, {e.g.},  in Dalla Riva and Mishuris \cite[Lem.~3.3]{DaMi15}.

\begin{teo}\label{regularitytheorem}
	Let $\mu \in L^2(\partial\Omega)$. Let $\beta \in [0,\alpha]$. If $\left( \frac{1}{2} I + W^\ast_{\partial\Omega} \right)[\mu]$ or $\left( -\frac{1}{2} I + W^\ast_{\partial\Omega} \right)[\mu]$ belongs to $C^{0,\beta}(\partial\Omega)$, then $\mu \in C^{0,\beta}(\partial\Omega)$.
\end{teo}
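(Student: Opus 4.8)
The plan is to reduce the statement to a bootstrap argument built on the iterated-kernel technique for the weakly singular operator $W^\ast_{\partial\Omega}$. Since the two cases $\pm\frac{1}{2}I + W^\ast_{\partial\Omega}$ are handled identically (only the signs change), I would fix the sign and set $g \equiv \left(\pm\frac{1}{2}I + W^\ast_{\partial\Omega}\right)[\mu] \in C^{0,\beta}(\partial\Omega)$. Rearranging gives the fixed-point identity $\mu = h + T[\mu]$, where $h \equiv \pm 2g \in C^{0,\beta}(\partial\Omega)$ and $T \equiv \mp 2\,W^\ast_{\partial\Omega}$. Iterating this identity $N$ times yields the telescoping formula
\[
\mu = \sum_{j=0}^{N-1} T^j[h] + T^N[\mu],
\]
and the goal is to choose $N$ so large that both the finite sum and the remainder lie in $C^{0,\beta}(\partial\Omega)$.

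For the finite sum I would use that $W^\ast_{\partial\Omega}$ maps $C^{0,\beta}(\partial\Omega)$ into itself for every $\beta \in [0,\alpha]$: the case $\beta=\alpha$ is the mapping property recalled in Section \ref{Preliminaries}, and the argument for smaller exponents is analogous since $\partial\Omega$ is of class $C^{1,\alpha}$ and hence $\nu_\Omega \in C^{0,\alpha}(\partial\Omega) \subset C^{0,\beta}(\partial\Omega)$. Consequently each $T^j[h]$ belongs to $C^{0,\beta}(\partial\Omega)$, and so does the partial sum $\sum_{j=0}^{N-1} T^j[h]$, for every $N$.

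The heart of the matter is the remainder $T^N[\mu] = (\mp 2)^N (W^\ast_{\partial\Omega})^N[\mu]$. Here I would invoke the composition rule for continuous kernels: if $K'$ and $K''$ are continuous kernels of orders $\beta'$ and $\beta''$, then $\mathcal{K}_{K'}\circ \mathcal{K}_{K''}$ has a kernel that is continuous of order $\beta'+\beta''-(n-1)$ when this quantity is positive, and which is bounded (indeed continuous) otherwise. Since the kernel $\nu_\Omega(x)\cdot\nabla S_n(x-y)$ of $W^\ast_{\partial\Omega}$ is weakly singular of order strictly less than $n-1$ (like the kernel $K_2$ of Section \ref{Preliminaries}), each composition with it lowers the order by a fixed positive amount; hence for $N$ large enough the kernel of $(W^\ast_{\partial\Omega})^N$ becomes continuous and, crucially, Hölder continuous of exponent $\beta$ in its first variable, uniformly in the second. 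An operator with such a kernel maps $L^2(\partial\Omega)$ (in fact $L^1(\partial\Omega)$) continuously into $C^{0,\beta}(\partial\Omega)$, because
\[
\bigl|(W^\ast_{\partial\Omega})^N[\mu](x) - (W^\ast_{\partial\Omega})^N[\mu](x')\bigr| \le C\,|x-x'|^\beta\,\|\mu\|_{L^2(\partial\Omega)}.
\]
Thus $T^N[\mu] \in C^{0,\beta}(\partial\Omega)$, and combining with the previous step gives $\mu \in C^{0,\beta}(\partial\Omega)$, as desired.

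The main obstacle I expect is precisely the quantitative control of the iterated kernels. One must verify both that sufficiently many compositions push the singularity order below zero, which fixes how large $N$ has to be in terms of $n$ and $\alpha$, and that the resulting kernel inherits $\beta$-Hölder continuity in its first variable from the $C^{1,\alpha}$ regularity of $\partial\Omega$ together with the smoothness of $\nabla S_n$ away from the diagonal. Tracking the Hölder seminorm, and not merely the pointwise size, through each composition is the delicate bookkeeping: mere continuity of the output would already follow from the weaker Theorem \ref{regularityvecttheorem}, whereas the upgrade to $C^{0,\beta}(\partial\Omega)$ is exactly what forces the refined kernel estimates.
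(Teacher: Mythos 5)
Your skeleton---the fixed-point identity $\mu=h+T[\mu]$, its $N$-fold iteration, and the splitting into a finite sum plus a smoothing remainder $T^N[\mu]$---is exactly the ``standard argument on iterated kernels'' that the paper itself delegates to Dalla Riva and Mishuris \cite[Lem.~3.3]{DaMi15}, and the finite-sum step is fine modulo the classical mapping property $W^\ast_{\partial\Omega}:C^{0,\beta}(\partial\Omega)\to C^{0,\beta}(\partial\Omega)$ (Theorem \ref{Schaudercompact} at $\beta=\alpha$, easy estimates for $\beta<\alpha$). The genuine gap is in the remainder, and it sits precisely at the endpoint $\beta=\alpha$. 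You claim that for $N$ large the iterated kernel $K^{(N)}$ of $(W^\ast_{\partial\Omega})^N$ satisfies $\sup_{y}|K^{(N)}(x,y)-K^{(N)}(x',y)|\le C|x-x'|^{\beta}$, and that this follows by ``tracking the H\"older seminorm through each composition,'' i.e.\ by absolute-value bounds on kernel differences. That bookkeeping cannot reach $\beta=\alpha$. Writing $K(x,z)=\nu_\Omega(x)\cdot\nabla S_n(x-z)$, the difference $K(x,z)-K(x',z)$ contains the term $s_n^{-1}(\nu_\Omega(x)-\nu_\Omega(x'))\cdot(x-z)\,|x-z|^{-n}$, which on a $C^{1,\alpha}$ boundary is only bounded by $C|x-x'|^{\alpha}|x-z|^{1-n}$, and
\[
\int_{\{z\in\partial\Omega:\;|x-z|>2|x-x'|\}}|x-z|^{1-n}\,d\sigma_z\;\approx\;\log\frac{1}{|x-x'|},
\]
since the integrand is positive this logarithm is not an artifact of crude estimation. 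Pure seminorm tracking therefore yields $|x-x'|^{\alpha}\log(1/|x-x'|)$, i.e.\ only exponents $\beta<\alpha$, uniformly in $y$. To remove the logarithm one needs a cancellation device: subtract $K^{(N-1)}(x,y)$ from $K^{(N-1)}(z,y)$ (using an already established uniform $\gamma$-H\"older bound, $\gamma\in\,]0,\alpha[$, on the previous iterate) and observe that
\[
\int_{\partial\Omega}\bigl(K(x,z)-K(x',z)\bigr)\,d\sigma_z=W^\ast_{\partial\Omega}[1](x)-W^\ast_{\partial\Omega}[1](x'),
\]
where $W^\ast_{\partial\Omega}[1]\in C^{0,\alpha}(\partial\Omega)$ by the jump relations and $v^{+}_{\Omega}[1]\in C^{1,\alpha}(\overline{\Omega})$ (Theorem \ref{sdp} (ii)--(iii)). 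This is the Schauder-type argument underlying Theorem \ref{Schaudercompact}; it is an additional structural idea, not bookkeeping, and your proposal does not contain it.

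This endpoint is not a dispensable technicality for the paper: Theorem \ref{regularitytheorem} is invoked with $\beta=\alpha$ in the proof of Proposition \ref{J_A} to upgrade the densities from $C^{0}$ to $C^{0,\alpha}$, which is what ultimately makes the solution of \eqref{princeq} classical. So as written your argument proves the statement only for $\beta\in[0,\alpha[$. The repair is localized: run your iteration with naive estimates to get that, for $N$ large, $K^{(N)}$ is bounded and uniformly $\gamma$-H\"older in $x$ for some $\gamma\in\,]0,\alpha[$, and then perform one more composition using the cancellation identity above to upgrade $K^{(N+1)}$ to uniform $\alpha$-H\"older continuity in $x$; with that lemma in hand, the rest of your proof goes through and coincides in substance with the cited proof in \cite{DaMi15}.
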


\section{Existence result for problem (\ref{princeq})}\label{sec princeq}

The aim of this section is to prove an existence result for problem \eqref{princeq}. We start with the following representation result for harmonic functions in $\Omega^o \setminus \overline{\Omega}$ and in $\Omega$ in terms of single layer potentials plus constant functions. The set $\Omega$ in the Lemma \ref{rapprharm} will be later replaced by the set $\Omega^i$ and by the perturbed set $\Omega^i[\phi]$.

\begin{lemma}\label{rapprharm}
	Let $\Omega$ be an open bounded connected subset of $\R^n$ of class $C^{1,\alpha}$, such that $\mathbb{R}^n\setminus \overline{\Omega}$ is connected and $\overline{\Omega}\subset \Omega^o$. Then the map from $C^{0,\alpha}(\partial\Omega^o)_0 \times C^{0,\alpha}(\partial\Omega) \times C^{0,\alpha}(\partial\Omega)_0 \times \R^2$
	to $C^{1,\alpha}_{\mathrm{h}}(\overline{\Omega^o} \setminus \Omega) \times C^{1,\alpha}_{\mathrm{h}}(\overline{\Omega})$   that  takes a quintuple $(\mu^o,\mu^i,\eta^i,\rho^o,\rho^i)$ to the pair of functions $(U^o_\Omega[\mu^o,\mu^i,\eta^i,\rho^o,\rho^i], U^i_\Omega[\mu^o,\mu^i,\eta^i,\rho^o,\rho^i])$ defined by
	\begin{equation}\label{U^o,U^i}
	\begin{aligned}
	& U^o_\Omega[\mu^o,\mu^i,\eta^i,\rho^o,\rho^i] \equiv (v^+_{\Omega^o} [\mu^o] + v^-_{\Omega}[\mu^i] + \rho^o)_{| \overline{\Omega^o} \setminus \Omega}
	\\
	& U^i_\Omega[\mu^o,\mu^i,\eta^i,\rho^o,\rho^i] \equiv v^+_{\Omega}[\eta^i] + \rho^i
	\end{aligned}
	\end{equation}
	is bijective.
\end{lemma}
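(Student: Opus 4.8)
The plan is to show the map is well defined, split it into two independent blocks, and treat each. Since $\overline{\Omega}\subset\Omega^o$ the supports $\partial\Omega^o$ and $\partial\Omega$ are disjoint, so by statements (i)--(ii) of Theorem \ref{sdp} the functions $v^+_{\Omega^o}[\mu^o]$ and $v^-_{\Omega}[\mu^i]$ are harmonic in $\Omega^o\setminus\overline{\Omega}$ and of class $C^{1,\alpha}$ up to its boundary, while $v^+_{\Omega}[\eta^i]$ is harmonic in $\Omega$ and of class $C^{1,\alpha}(\overline{\Omega})$; hence the pair lands in $C^{1,\alpha}_{\mathrm{h}}(\overline{\Omega^o}\setminus\Omega)\times C^{1,\alpha}_{\mathrm{h}}(\overline{\Omega})$. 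As $U^i_\Omega$ depends only on $(\eta^i,\rho^i)$ and $U^o_\Omega$ only on $(\mu^o,\mu^i,\rho^o)$, bijectivity of the whole map is equivalent to bijectivity of the \emph{inner block} $(\eta^i,\rho^i)\mapsto U^i_\Omega$ and of the \emph{outer block} $(\mu^o,\mu^i,\rho^o)\mapsto U^o_\Omega$ separately. The basic building block I would isolate is: for every bounded connected open set $D$ of class $C^{1,\alpha}$ with connected exterior, the map $(\mu,\rho)\mapsto v^+_D[\mu]+\rho$ is a bijection of $C^{0,\alpha}(\partial D)_0\times\R$ onto $C^{1,\alpha}_{\mathrm{h}}(\overline{D})$; injectivity follows by taking traces and invoking the isomorphism in statement (iv) of Theorem \ref{sdp}, and surjectivity by using (iv) to match the Dirichlet trace of a given $h$ and then invoking the maximum principle. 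Applying this with $D=\Omega$ settles the inner block at once, and it already explains the structure of the data: the zero-average densities $\eta^i,\mu^o$ and the constants $\rho^i,\rho^o$ parametrise interior-harmonic functions via (iv), whereas $\mu^i$ is left free because it will serve as a transmission/jump density.

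For the injectivity of the outer block I would argue globally. Assume $U^o_\Omega[\mu^o,\mu^i,\eta^i,\rho^o,\rho^i]=0$ and set $\Psi\equiv v_{\Omega^o}[\mu^o]+v_{\Omega}[\mu^i]+\rho^o$ on $\R^n$, which is continuous since the densities are H\"older continuous. By hypothesis $\Psi\equiv 0$ on $\overline{\Omega^o}\setminus\Omega$. As $\Psi$ is harmonic in $\Omega$ and vanishes on $\partial\Omega$ by continuity, the maximum principle gives $\Psi\equiv 0$ on $\overline{\Omega}$, hence on all of $\overline{\Omega^o}$. Both one-sided normal derivatives of $\Psi$ on $\partial\Omega$ therefore vanish; since $v_{\Omega^o}[\mu^o]$ and the constant are of class $C^1$ across $\partial\Omega$, the jump relations of statement (iii) of Theorem \ref{sdp} identify the jump of $\nu_{\Omega}\cdot\nabla\Psi$ across $\partial\Omega$ with $\mu^i$, whence $\mu^i=0$. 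Then $\Psi=v_{\Omega^o}[\mu^o]+\rho^o\equiv 0$ on $\overline{\Omega^o}$, so $V_{\partial\Omega^o}[\mu^o]+\rho^o=0$, and (iv) applied to $\Omega^o$ forces $\mu^o=0$ and $\rho^o=0$.

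Surjectivity of the outer block is the step I expect to require the most care, and I would prove it constructively, by a gluing-and-correction argument, rather than through a Fredholm/index count. Given $w\in C^{1,\alpha}_{\mathrm{h}}(\overline{\Omega^o}\setminus\Omega)$, first solve the interior Dirichlet problem in $\Omega$ with datum $w_{|\partial\Omega}$ (equivalently, apply the building block with $D=\Omega$) to obtain $W_\Omega\in C^{1,\alpha}_{\mathrm{h}}(\overline{\Omega})$, and glue it to $w$ into a function $\tilde w$ on $\overline{\Omega^o}$ that is continuous across $\partial\Omega$ and harmonic on each side. Its normal derivative generally jumps across $\partial\Omega$ by some $j\in C^{0,\alpha}(\partial\Omega)$; choosing $\mu^i\equiv j$ and using that $v_{\Omega}[\mu^i]$ is continuous across $\partial\Omega$ with normal-derivative jump exactly $\mu^i$ (statement (iii)), the function $h\equiv\tilde w-v_{\Omega}[\mu^i]$ has matching Dirichlet and normal-derivative traces across $\partial\Omega$. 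Hence $h$ is of class $C^1$ across $\partial\Omega$ and harmonic off it, so by Weyl's lemma it is harmonic in all of $\Omega^o$, i.e. $h\in C^{1,\alpha}_{\mathrm{h}}(\overline{\Omega^o})$. Applying the building block with $D=\Omega^o$ yields $(\mu^o,\rho^o)\in C^{0,\alpha}(\partial\Omega^o)_0\times\R$ with $h=v^+_{\Omega^o}[\mu^o]+\rho^o$. Restricting to $\overline{\Omega^o}\setminus\Omega$, where $v_{\Omega}[\mu^i]=v^-_{\Omega}[\mu^i]$, gives $w=v^+_{\Omega^o}[\mu^o]+v^-_{\Omega}[\mu^i]+\rho^o$, which is exactly the outer component produced by $(\mu^o,\mu^i,\rho^o)$.

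The crux is therefore the justification that the corrected function $h$ is genuinely harmonic across the interface $\partial\Omega$ with the claimed $C^{1,\alpha}$ regularity up to $\partial\Omega^o$: this rests on the cancellation of the normal-derivative jump through the choice of $\mu^i$, on the Dirichlet continuity of the single layer, and on the harmonic-continuation (Weyl) step. I would stress that this route uses only the isomorphism (iv), the jump relations (iii), and the maximum principle, all valid for every $n\ge 2$; in particular it sidesteps the delicate invertibility of the single-layer operator and of $\tfrac12 I+W^\ast_{\partial\Omega}$ in the borderline case $n=2$, which would otherwise be the main obstruction in a purely integral-equation treatment.
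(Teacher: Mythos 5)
Your proof is correct, and its first half coincides in substance with the paper's: the well-definedness, the inner block (Dirichlet uniqueness plus Theorem \ref{sdp} (iv)), and the injectivity of the outer block (recovering $\mu^i$ as the normal-derivative jump of the identically vanishing function $\Psi$, then killing $(\mu^o,\rho^o)$ by (iv)) all appear there in nearly identical form. Where you genuinely diverge is the surjectivity of the outer block. The paper reformulates $(v^+_{\Omega^o}[\mu^o]+v^-_{\Omega}[\mu^i]+\rho^o)_{|\overline{\Omega^o}\setminus\Omega}=h^o$ as the integral system \eqref{sysinteq h^o}, shows that the associated operator is a compact perturbation of an isomorphism --- using Theorem \ref{Schaudercompact} and the mapping properties of integral operators with real analytic kernels of \cite[Prop.~4.1]{LaMu13} --- hence is Fredholm of index $0$, and then upgrades injectivity to bijectivity. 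You instead construct the densities by hand: solve the interior Dirichlet problem, take $\mu^i$ equal to the Neumann jump of the glued function, correct by $v_{\Omega}[\mu^i]$ so that both Cauchy data match across $\partial\Omega$, conclude via Green's identity/Weyl that the corrected function is harmonic on all of $\Omega^o$, and finally represent it through (iv) applied to $\Omega^o$. Your route is more elementary and constructive: it needs only (iii), (iv) and Dirichlet uniqueness, isolates a reusable building block (the bijectivity of $(\mu,\rho)\mapsto v^+_D[\mu]+\rho$ onto $C^{1,\alpha}_{\mathrm{h}}(\overline{D})$), and bypasses Fredholm theory and the compactness theorems entirely. What the paper's route buys is that the operator it analyses is structurally the same as $J_A$ of Proposition \ref{J_A}, so the compact-perturbation machinery is set up once here and reused there, and it applies even when the system does not decouple into a triangular cascade as yours does. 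Two minor remarks: in the Weyl step you should state explicitly that matching Dirichlet traces and matching normal derivatives make the interface terms in Green's second identity cancel, which is the actual content of the harmonic continuation; and your closing claim overstates the contrast, since the paper's proof also never inverts $\tfrac12 I+W^\ast_{\partial\Omega}$ (it uses only Fredholmness plus injectivity), and both arguments rest on the same isomorphism (iv), which holds for $n=2$ as well.
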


\begin{proof} \ 
	The map is well defined. Indeed, by the harmonicity and regularity properties of single layer potentials (cf.~Theorem \ref{sdp} (i)-(ii)), we know that
	\begin{align*}
	&\Delta U^o_\Omega[\mu^o,\mu^i,\eta^i,\rho^o,\rho^i]=0 \quad \text{on } \Omega^o \setminus \overline{\Omega},
	\\
	&\Delta U^i_\Omega[\mu^o,\mu^i,\eta^i,\rho^o,\rho^i]=0 \quad \text{on } \Omega,
	\\
	&(U^o_\Omega[\mu^o,\mu^i,\eta^i,\rho^o,\rho^i],U^i_\Omega[\mu^o,\mu^i,\eta^i,\rho^o,\rho^i]) \in C^{1,\alpha}(\overline{\Omega^o} \setminus \Omega) \times C^{1,\alpha}(\overline{\Omega}),
	\end{align*}
	for all $(\mu^o,\mu^i,\eta^i,\rho^o,\rho^i) \in C^{0,\alpha}(\partial\Omega^o)_0 \times C^{0,\alpha}(\partial\Omega) \times C^{0,\alpha}(\partial\Omega)_0 \times \R^2$. We now show that it is bijective. So, we take a pair of functions $(h^o,h^i) \in C^{1,\alpha}_{\mathrm{h}}(\overline{\Omega^o} \setminus \Omega) \times C^{1,\alpha}_{\mathrm{h}}(\overline{\Omega})$ and we prove that there exists a unique quintuple $(\mu^o,\mu^i,\eta^i,\rho^o,\rho^i) \in C^{0,\alpha}(\partial\Omega^o)_0 \times C^{0,\alpha}(\partial\Omega) \times C^{0,\alpha}(\partial\Omega)_0 \times \R^2$ such that 
	\begin{equation}\label{h^o,h^i}
	(U^o_\Omega[\mu^o,\mu^i,\eta^i,\rho^o,\rho^i],U^i_\Omega[\mu^o,\mu^i,\eta^i,\rho^o,\rho^i])=(h^o,h^i).
	\end{equation}
	By the uniqueness of the classical solution of the Dirichlet boundary value problem, the second equation in \eqref{h^o,h^i} is equivalent to
	\begin{equation}\label{h^i}
	V_{\partial\Omega}[\eta^i] + \rho^i = h^i_{| \partial \Omega}
	\end{equation}
	(notice that, since $h^i$ is an element of $C^{1,\alpha}_{\mathrm{h}}(\overline{\Omega})$, we have $h^i_{|\partial\Omega} \in C^{1,\alpha} (\partial\Omega) \subseteq C^{0,\alpha} (\partial\Omega)$).
	By Theorem \ref{sdp} (iv), there exists a unique pair $(\eta^i,\rho^i) \in C^{0,\alpha}(\partial\Omega)_0 \times \R$ such that \eqref{h^i} holds.
	Then it remains to show that there exists a unique triple $(\mu^o,\mu^i,\rho^o) \in C^{0,\alpha}(\partial\Omega^o)_0 \times C^{0,\alpha}(\partial\Omega) \times \R$ such that 
	\begin{equation}\label{h^o}
		(v^+_{\Omega^o} [\mu^o] + v^-_{\Omega^i}[\mu^i] + \rho^o)_{| \overline{\Omega^o} \setminus \Omega} = h^o.
	\end{equation}
	By the jump relations for the single layer potential (cf.~Theorem \ref{sdp} (iii)) and by the uniqueness of the classical solution of the Neumann-Dirichlet mixed boundary value problem, 
	 equation \eqref{h^o} is equivalent to the following system of integral equations:
	\begin{equation}\label{sysinteq h^o}
	\begin{aligned}
	& V_{\partial\Omega^o} [\mu^o] + v^-_{\Omega}[\mu^i]_{|\partial\Omega^o} + \rho^o = h^o_{|\partial\Omega^o},
	\\
	& \left( \frac{1}{2} I + W^\ast_{\partial\Omega} \right) [\mu^i] + \nu_{\Omega} \cdot \nabla v^+_{\Omega^o}[\mu^o]_{|\partial\Omega} = \nu_{\Omega} \cdot \nabla h^o_{|\partial\Omega}
	\end{aligned}
	\end{equation}
	(notice that, by $h^o \in C^{1,\alpha}_{\mathrm{h}}(\overline{\Omega})$, we get $h^o_{|\partial\Omega} \in C^{1,\alpha} (\partial\Omega) \subseteq C^{0,\alpha} (\partial\Omega)$ and $\nu_{\Omega} \cdot \nabla h^o_{|\partial\Omega} \in C^{0,\alpha} (\partial\Omega)$).
	Then we observe that by Theorem \ref{sdp} (iv), the map from $C^{0,\alpha}(\partial\Omega^o)_0 \times C^{0,\alpha}(\partial\Omega) \times \R$ to $C^{0,\alpha}(\partial\Omega^o) \times C^{0,\alpha}(\partial\Omega)$  that  takes a triple $(\mu^o,\mu^i,\rho^o)$ to the pair of functions 
	 $\left(V_{\partial\Omega^o} [\mu^o] + \rho^o, \frac{1}{2} \mu^i \right)$	is an isomorphism. Moreover, by the properties of integral operators with real analytic kernel and no singularities (cf. Lanza de Cristoforis and Musolino \cite[Prop. 4.1]{LaMu13}) and by Theorem \ref{Schaudercompact}, the map from $C^{0,\alpha}(\partial\Omega^o)_0 \times C^{0,\alpha}(\partial\Omega) \times \R$ to $C^{0,\alpha}(\partial\Omega^o) \times C^{0,\alpha}(\partial\Omega)$  that  takes a triple $(\mu^o,\mu^i,\rho^o)$ to the pair of functions $(v^-_{\Omega}[\mu^i]_{|\partial\Omega^o},
	W^\ast_{\partial\Omega}[\mu^i] + \nu_{\Omega} \cdot \nabla v^+_{\Omega^o}[\mu^o]_{|\partial\Omega})$
	is compact. Hence, the map from $C^{0,\alpha}(\partial\Omega^o)_0 \times C^{0,\alpha}(\partial\Omega) \times \R$ to $C^{0,\alpha}(\partial\Omega^o) \times C^{0,\alpha}(\partial\Omega)$  that  takes a triple $(\mu^o,\mu^i,\rho^o)$ to the pair of functions
	\begin{equation*}
	\left(V_{\partial\Omega^o} [\mu^o] + v^-_{\Omega}[\mu^i]_{|\partial\Omega^o} + \rho^o, \left( \frac{1}{2} I + W^\ast_{\partial\Omega} \right) [\mu^i] + \nu_{\Omega} \cdot \nabla v^+_{\Omega^o}[\mu^o]_{|\partial\Omega} \right)
	\end{equation*}
	is a compact perturbation of an isomorphism and therefore it is a Fredholm operator of index 0. Thus, to complete the proof, it suffices to show that \eqref{sysinteq h^o} with $(h^o_{|\partial\Omega^o}, \nu_{\Omega} \cdot \nabla h^o_{|\partial\Omega}) = (0,0)$ implies $(\mu^o,\mu^i,\rho^o)=(0,0,0)$. If 
	\begin{equation}\label{sysinteq h^o=0}
	\left(V_{\partial\Omega^o} [\mu^o] + v^-_{\Omega}[\mu^i]_{|\partial\Omega^o} + \rho^o, \left( \frac{1}{2} I + W^\ast_{\partial\Omega} \right) [\mu^i] + \nu_{\Omega} \cdot \nabla v^+_{\Omega^o}[\mu^o]_{|\partial\Omega} \right) =(0,0),
	\end{equation}
	then by the jump relations for the single layer potential (cf.~Theorem \ref{sdp} (iii)) and by the uniqueness of the classical solution of Neumann-Dirichlet mixed boundary value problem, one deduces that $(v^+_{\Omega^o} [\mu^o] + v^-_{\Omega}[\mu^i] + \rho^o)_{| \overline{\Omega^o} \setminus \Omega}=0$. Moreover, by the continuity of $v_{\Omega}[\mu^i]$ in $\R^n$, we have that $(v^+_{\Omega^o} [\mu^o] + v^-_{\Omega}[\mu^i] + \rho^o)_{| \partial\Omega} = (v^+_{\Omega^o} [\mu^o] + v^+_{\Omega}[\mu^i] + \rho^o)_{|\partial\Omega} = 0$. Then by the uniqueness of the classical solution of Dirichlet boundary value problem in $\Omega$ we deduce that
	\begin{equation}\label{mu^o mu^i rho^o}
	(v^+_{\Omega^o} [\mu^o] + v^+_{\Omega}[\mu^i] + \rho^o)_{| \overline{\Omega} }=0.
	\end{equation} 
	By the jump relations for the single layer potential (cf.~Theorem \ref{sdp} (iii)), adding and subtracting the term $\nu_{\Omega} \cdot \nabla ( v^+_{\Omega^o}[\mu^o] +\rho^o)_{|\partial\Omega}$ and taking into account \eqref{mu^o mu^i rho^o}, we get 
	\begin{equation*}
	\begin{split}
	\mu^i &= \nu_{\Omega} \cdot \nabla v^-_{\Omega}[\mu^i]_{|\partial\Omega} - \nu_{\Omega} \cdot \nabla v^+_{\Omega}[\mu^i]_{|\partial\Omega}
	\\
	&= \nu_{\Omega} \cdot \nabla ( v^+_{\Omega^o}[\mu^o] + v^-_{\Omega}[\mu^i] +\rho^o)_{|\partial\Omega} -  \nu_{\Omega} \cdot \nabla ( v^+_{\Omega^o}[\mu^o] + v^+_{\Omega}[\mu^i] +\rho^o)_{|\partial\Omega} = 0.
	\end{split}
	\end{equation*}
	Thus, by \eqref{sysinteq h^o=0}, we obtain $V_{\Omega^o} [\mu^o] + \rho^o = 0$ on $\partial \Omega^o$, which implies $(\mu^o,\rho^o)=(0,0)$ (cf.~Theorem \ref{sdp} (iv)). Hence $(\mu^o,\mu^i,\rho^o)=(0,0,0)$ and the proof is complete.
\end{proof}

To represent the boundary conditions of a linearised version of problem \eqref{princeq}, we find convenient to introduce a matrix function 
\[
A(\cdot) = \begin{pmatrix}
A_{11}(\cdot) & A_{12}(\cdot)
\\
A_{21}(\cdot) & A_{22}(\cdot)
\end{pmatrix} : \partial \Omega^i \to M_2(\R).
\] 
Here above, the symbol $M_2(\R)$ denotes the set of $2\times 2$ matrices with real entries. We set
\[
\tilde{A}(\cdot)\equiv \begin{pmatrix}
A_{11} (\cdot)& A_{12}(\cdot) \\
-A_{21} (\cdot)& -A_{22} (\cdot)
\end{pmatrix}\, .
\] 
We will assume the following conditions on the matrix $A$:
\begin{equation}\label{Acondition}
\begin{split}
&\bullet \, A_{j,k} \in C^{0,\alpha} (\partial\Omega^i) \mbox{ for all } j,k \in \{1,2\};
\\
&\bullet \,\mbox{For every } (\xi_1,\xi_2) \in \R^2, (\xi_1,\xi_2) \tilde{A} (\xi_1,\xi_2)^T \geq 0 \mbox{ on } \partial\Omega^i;
\\
&\bullet \, \mbox{If } (c_1,c_2) \in \R^2 \mbox{ and } A(x)(c_1,c_2)^T = 0 \mbox{ for all } x \in \partial \Omega^i, \mbox{ then } (c_1,c_2)=(0,0). 
\end{split}
\end{equation}
We remark that in literature the third condition in \eqref{Acondition} is often replaced by a condition on the invertibility of the matrix $A$, namely
\begin{equation}\label{A*condition}
\bullet \mbox{There exists a point } x \in \partial\Omega^i \mbox{ such that } A(x) \mbox{ is invertible}.
\end{equation}
We point out that, for instance, the matrix $A(x) = \begin{pmatrix}
x_1^2 &  x_1\\
-x_1 & -1 
\end{pmatrix}$ with $x = (x_1,\dots,x_n) \in \partial\Omega^i$ satisfies the third condition in \eqref{Acondition} but not condition \eqref{A*condition}. Then by a standard energy argument we deduce the following result on the uniqueness of the solution of a transmission problem.

\begin{lemma}\label{Alemma}
	Let $A$ be as in \eqref{Acondition}. Then the unique solution in $C^{1,\alpha}(\overline{\Omega^o} \setminus \Omega^i) \times C^{1,\alpha}(\overline{\Omega^i})$ of problem
	\begin{equation}\label{Aproblem}
	\begin{cases}
	\Delta u^o = 0 & \quad \mbox{in } \Omega^o \setminus \overline{\Omega^i}, 
	\\
	\Delta u^i = 0 & \quad \mbox{in } \Omega^i, 
	\\
	\nu_{\Omega^o}(x) \cdot \nabla u^o(x)= 0 & \quad\forall x \in \partial \Omega^o, 
	\\
	\nu_{\Omega^i}(x) \cdot \nabla u^o (x) - A_{11}(x) u^o(x) - A_{12}(x) u^i(x) = 0 & \quad\forall x \in \partial \Omega^i,
	\\
	\nu_{\Omega^i}(x) \cdot \nabla u^i (x) - A_{21}(x) u^o(x) - A_{22}(x) u^i(x) = 0 & \quad \forall x \in \partial \Omega^i,
	\end{cases}
	\end{equation}
	is $(u^o,u^i)=(0,0)$.
\end{lemma}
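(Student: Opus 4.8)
The plan is to use a standard energy (Green's first identity) argument, exploiting the sign structure encoded in the auxiliary matrix $\tilde A$. Let $(u^o,u^i)$ be a solution of \eqref{Aproblem}. Since $u^o$ and $u^i$ are harmonic and of class $C^{1,\alpha}$ up to the boundary, Green's first identity applies on each of the two subdomains. On the annular region $\Omega^o \setminus \overline{\Omega^i}$, whose boundary is $\partial\Omega^o \cup \partial\Omega^i$ with outward unit normal equal to $\nu_{\Omega^o}$ on $\partial\Omega^o$ and to $-\nu_{\Omega^i}$ on $\partial\Omega^i$, I would write
\[
\int_{\Omega^o \setminus \overline{\Omega^i}} |\nabla u^o|^2 \, dx = \int_{\partial\Omega^o} u^o\, (\nu_{\Omega^o}\cdot\nabla u^o)\,d\sigma - \int_{\partial\Omega^i} u^o\,(\nu_{\Omega^i}\cdot\nabla u^o)\,d\sigma .
\]
The homogeneous Neumann condition on $\partial\Omega^o$ kills the first term, and on $\Omega^i$ Green's identity gives $\int_{\Omega^i}|\nabla u^i|^2\,dx = \int_{\partial\Omega^i} u^i\,(\nu_{\Omega^i}\cdot\nabla u^i)\,d\sigma$.

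Next I would insert the two transmission conditions on $\partial\Omega^i$, namely $\nu_{\Omega^i}\cdot\nabla u^o = A_{11}u^o + A_{12}u^i$ and $\nu_{\Omega^i}\cdot\nabla u^i = A_{21}u^o + A_{22}u^i$, and add the two resulting identities. A direct computation shows that the combined boundary integrand is exactly $-(u^o,u^i)\,\tilde A\,(u^o,u^i)^T$, which is precisely the purpose of the sign flip in the definition of $\tilde A$. Thus
\[
\int_{\Omega^o \setminus \overline{\Omega^i}}|\nabla u^o|^2\,dx + \int_{\Omega^i}|\nabla u^i|^2\,dx = -\int_{\partial\Omega^i}(u^o,u^i)\,\tilde A\,(u^o,u^i)^T\,d\sigma .
\]
The left-hand side is nonnegative, while by the second condition in \eqref{Acondition} the right-hand side is nonpositive; hence both vanish. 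In particular $\nabla u^o = 0$ on $\Omega^o \setminus \overline{\Omega^i}$ and $\nabla u^i = 0$ on $\Omega^i$, so by connectivity of the two subdomains $u^o \equiv c_1$ and $u^i \equiv c_2$ for some constants $c_1, c_2 \in \R$.

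Finally, since the gradients vanish, the normal derivatives $\nu_{\Omega^i}\cdot\nabla u^o$ and $\nu_{\Omega^i}\cdot\nabla u^i$ are identically zero on $\partial\Omega^i$; feeding $u^o \equiv c_1$ and $u^i \equiv c_2$ back into the transmission conditions yields $A(x)(c_1,c_2)^T = 0$ for every $x \in \partial\Omega^i$. By the third condition in \eqref{Acondition} this forces $(c_1,c_2)=(0,0)$, i.e.\ $(u^o,u^i)=(0,0)$. I expect the only delicate points to be the bookkeeping of the outward normals (in particular the sign $-\nu_{\Omega^i}$ on the inner boundary of the annular region, which is exactly what makes $\tilde A$ rather than $A$ appear in the energy balance) and the justification that Green's identity is legitimate at the $C^{1,\alpha}$ regularity available; the connectivity of $\Omega^o \setminus \overline{\Omega^i}$, guaranteed by \eqref{introsetconditions}, is what permits the reduction to constants.
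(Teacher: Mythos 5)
Your proof is correct and is precisely the ``standard energy argument'' that the paper invokes (without writing out) to justify Lemma \ref{Alemma}: the paper's presentation consists exactly of this remark, and your Green's identity computation, the sign bookkeeping on $\partial\Omega^i$ that produces $\tilde{A}$, and the final use of the second and third conditions in \eqref{Acondition} fill in that argument correctly.
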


In the following proposition, we investigate the properties of an auxiliary boundary operator, $J_A$, which we will exploit in the integral formulation of our problem, in order to recast a fixed point equation. More precisely, we prove that $J_A$ is an isomorphism in $L^2$, in $C^0$, and in $C^{0,\alpha}$. All those three frameworks  will be important: the first setting is suitable to use Fredholm theory and to directly prove the isomorphic property of $J_A$, the second setting will be used in order to apply Leray-Shauder Theorem to the aforementioned obtained fixed point equation (see Propositions \ref{Tcontcomp} and \ref{prop mu_0} below) and the third setting will be central to deduce that the solution of problem \eqref{princeq} we built is actually a classical solution, in particular of class $C^{1,\alpha}$ (cf. Propositions \ref{propintsys} and \ref{prop u^o_0,u^i_0})

\begin{prop}\label{J_A}
	Let $A$ be as in \eqref{Acondition}. Let $J_A$ be the map from $L^2(\partial\Omega^o)_0 \times L^2(\partial\Omega^i) \times L^2(\partial\Omega^i)_0 \times \R^2$ to $L^2(\partial\Omega^o) \times (L^2(\partial\Omega^i))^2$  that  takes a quintuple $(\mu^o,\mu^i,\eta^i,\rho^o,\rho^i)$ to the triple $J_A[\mu^o,\mu^i,\eta^i,\rho^o,\rho^i]$ defined by
	\begin{equation}\label{J_A eq}
	\begin{aligned}
	J_{A,1}[\mu^o,\mu^i,\eta^i,\rho^o,\rho^i] &\equiv \left( -\frac{1}{2} I + W^\ast_{\partial\Omega^o} \right) [\mu^o] + \nu_{\Omega^o} \cdot \nabla  v^-_{\Omega^i}[\mu^i]_{|\partial\Omega^o} \qquad \mbox{on } \partial\Omega^o,
	\\
	J_{A,2}[\mu^o,\mu^i,\eta^i,\rho^o,\rho^i] &\equiv \left( \frac{1}{2} I + W^\ast_{\partial\Omega^i} \right) [\mu^i] + \nu_{\Omega^i} \cdot \nabla  v^+_{\Omega^o}[\mu^o]_{|\partial\Omega^i} 
	\\
	 \quad - (A_{11},A_{12}) &\cdot (v^+_{\Omega^o}[\mu^o]_{|\partial\Omega^i} + V_{\partial\Omega^i}[\mu^i] + \rho^o ,V_{\partial\Omega^i}[\eta^i] + \rho^i  ) \qquad\mbox{on } \partial\Omega^i,
	\\
	J_{A,3}[\mu^o,\mu^i,\eta^i,\rho^o,\rho^i] &\equiv \left( -\frac{1}{2} I + W^\ast_{\partial\Omega^i} \right) [\eta^i] 
	\\
 \quad - (A_{21},A_{22}) &\cdot (v^+_{\Omega^o}[\mu^o]_{|\partial\Omega^i} + V_{\partial\Omega^i}[\mu^i] + \rho^o ,V_{\partial\Omega^i}[\eta^i] + \rho^i  )  \qquad \mbox{on } \partial\Omega^i.
	\end{aligned}
	\end{equation}
	Then the following statements hold.
	\begin{enumerate}
		\item[(i)] $J_A$ is a linear isomorphism from $L^2(\partial\Omega^o)_0 \times L^2(\partial\Omega^i) \times L^2(\partial\Omega^i)_0 \times \R^2$ to $L^2(\partial\Omega^o) \times (L^2(\partial\Omega^i))^2$.
		
		\item[(ii)] $J_A$ is a linear isomorphism from $C^0(\partial\Omega^o)_0 \times C^0(\partial\Omega^i) \times C^0(\partial\Omega^i)_0 \times \R^2$ to $C^0(\partial\Omega^o) \times (C^0(\partial\Omega^i))^2$.
		
		\item[(iii)] $J_A$ is a linear isomorphism from $C^{0,\alpha}(\partial\Omega^o)_0 \times C^{0,\alpha}(\partial\Omega^i) \times C^{0,\alpha}(\partial\Omega^i)_0 \times \R^2$ to $C^{0,\alpha}(\partial\Omega^o) \times (C^{0,\alpha}(\partial\Omega^i))^2$.
	\end{enumerate}
\end{prop}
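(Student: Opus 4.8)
The plan is to prove the three statements in sequence: first the $L^2$ case (i) directly, by combining Fredholm theory with an injectivity argument, and then to deduce (ii) and (iii) from (i) by a regularity bootstrap resting on Theorems~\ref{regularityvecttheorem} and~\ref{regularitytheorem}.

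\textbf{Fredholmness in $L^2$.} I would first show that $J_A$ is a Fredholm operator of index $0$ by writing it as the sum of an isomorphism and a compact operator. As reference isomorphism I take the map $\mathcal{L}_0$ sending $(\mu^o,\mu^i,\eta^i,\rho^o,\rho^i)$ to $(-\tfrac{1}{2}\mu^o+\rho^o,\ \tfrac{1}{2}\mu^i,\ -\tfrac{1}{2}\eta^i+\rho^i)$, where $\rho^o$ and $\rho^i$ are read as constant functions on $\partial\Omega^o$ and $\partial\Omega^i$. Using the splitting $L^2(\partial\Omega^o)=L^2(\partial\Omega^o)_0\oplus\R$ (and likewise on $\partial\Omega^i$), one checks at once that $\mathcal{L}_0$ is an isomorphism onto $L^2(\partial\Omega^o)\times(L^2(\partial\Omega^i))^2$. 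It then remains to observe that $J_A-\mathcal{L}_0$ is compact: indeed $W^{\ast}_{\partial\Omega^o}$ and $W^{\ast}_{\partial\Omega^i}$ are compact on $L^2$, the cross terms $\mu^i\mapsto\nu_{\Omega^o}\cdot\nabla v^-_{\Omega^i}[\mu^i]_{|\partial\Omega^o}$ and $\mu^o\mapsto\nu_{\Omega^i}\cdot\nabla v^+_{\Omega^o}[\mu^o]_{|\partial\Omega^i}$ have real analytic nonsingular kernels (the two boundaries being disjoint) and are thus compact, $V_{\partial\Omega^i}$ is compact (weak singularity), multiplication by the $C^{0,\alpha}$ entries $A_{jk}$ is bounded, and the terms carrying $\rho^o,\rho^i$ through the $A$-coupling are finite rank. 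Hence $J_A$ is a compact perturbation of $\mathcal{L}_0$ and is Fredholm of index $0$.

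\textbf{Injectivity in $L^2$.} Suppose $J_A[\mu^o,\mu^i,\eta^i,\rho^o,\rho^i]=0$ with $L^2$ densities; the crux is to bootstrap these to $C^{0,\alpha}$. From $J_{A,1}=0$ the term $\left(-\tfrac{1}{2}I+W^{\ast}_{\partial\Omega^o}\right)[\mu^o]$ equals the smooth cross term, so Theorem~\ref{regularitytheorem} gives $\mu^o\in C^{0,\alpha}(\partial\Omega^o)$. Next, rewriting $2J_{A,2}=0$ and $-2J_{A,3}=0$ as $(I+\mathcal{K})[\mu^i,\eta^i]=g$, where $\mathcal{K}$ collects $2W^{\ast}_{\partial\Omega^i}$ and the operators $A_{jk}V_{\partial\Omega^i}$, all with continuous kernels of order $n-2$, and $g$ gathers the (now $C^{0,\alpha}$) contributions of $\mu^o$ and of the constants, Theorem~\ref{regularityvecttheorem} yields $(\mu^i,\eta^i)\in(C^0(\partial\Omega^i))^2$; a second pass through Theorem~\ref{regularitytheorem}, using that $V_{\partial\Omega^i}$ maps $C^0$ into $C^{0,\alpha}$, upgrades these to $C^{0,\alpha}$. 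With $C^{0,\alpha}$ densities in hand I apply Lemma~\ref{rapprharm} with $\Omega=\Omega^i$ to form the harmonic pair $(U^o,U^i)\equiv(U^o_{\Omega^i}[\ldots],U^i_{\Omega^i}[\ldots])$; the jump relations of Theorem~\ref{sdp}(iii) translate the three identities $J_{A,k}=0$ exactly into the five conditions of problem \eqref{Aproblem} (with $f^o=0$). Lemma~\ref{Alemma} then forces $(U^o,U^i)=(0,0)$, and the injectivity of the map in Lemma~\ref{rapprharm} gives $(\mu^o,\mu^i,\eta^i,\rho^o,\rho^i)=0$. Being Fredholm of index $0$ and injective, $J_A$ is an isomorphism, which proves (i).

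\textbf{The continuous and H\"older settings.} For (ii) and (iii) I would use that $J_A$ is also a bounded operator between the corresponding $C^0$ and $C^{0,\alpha}$ spaces (boundedness of $W^{\ast}_{\partial\Omega}$, of $V_{\partial\Omega^i}$, and of multiplication by the $A_{jk}$ on these spaces). Injectivity is inherited from (i), since $C^0$ and $C^{0,\alpha}$ densities lie in $L^2$. For surjectivity, given data in $C^0(\partial\Omega^o)\times(C^0(\partial\Omega^i))^2$ (resp.\ in the $C^{0,\alpha}$ target), I solve $J_A[\cdot]=\text{data}$ in $L^2$ by (i) and run the very same bootstrap as above, now with nonzero right-hand side, to show that the unique $L^2$ preimage in fact has $C^0$ (resp.\ $C^{0,\alpha}$) components lying in the prescribed constrained spaces. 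A bounded linear bijection between Banach spaces has bounded inverse, so $J_A$ is an isomorphism in both settings. The main obstacle I anticipate is precisely the regularity bootstrap for the coupled pair $(\mu^i,\eta^i)$: one must cast $J_{A,2},J_{A,3}$ in the exact form $(I+\mathcal{K})[\mu^i,\eta^i]=g$ demanded by Theorem~\ref{regularityvecttheorem}, check that every kernel entering $\mathcal{K}$ is continuous of order $n-2<n-1$, and carry out the extra $C^0\to C^{0,\alpha}$ step; the careful bookkeeping of the jump relations needed to identify \eqref{Aproblem} is the other delicate point.
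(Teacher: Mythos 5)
Your proof is correct, and its overall architecture --- Fredholm of index $0$ in $L^2$, injectivity via a regularity bootstrap combined with Lemma~\ref{rapprharm}, the jump relations, and the uniqueness Lemma~\ref{Alemma}, and then deduction of (ii) and (iii) from (i) through the same bootstrap applied to a nonzero right-hand side plus the Open Mapping Theorem --- coincides with the paper's. The one genuine difference is how Fredholmness is established. The paper factors $J_A = \tilde{J}^{+}_A \circ \tilde{J}_A \circ \tilde{J}^{-}_A$, where $\tilde{J}^{-}_A$ is the inclusion of the constrained space into $L^2(\partial\Omega^o)\times(L^2(\partial\Omega^i))^2\times\R^2$ (index $-2$), $\tilde{J}_A$ is a compact perturbation of the isomorphism $(\mu^o,\mu^i,\eta^i,\rho^o,\rho^i)\mapsto(-\tfrac{1}{2}\mu^o,\tfrac{1}{2}\mu^i,-\tfrac{1}{2}\eta^i,\rho^o,\rho^i)$ of that larger space (index $0$), and $\tilde{J}^{+}_A$ folds the constants into the $A$-coupling terms (index $+2$), so that the index of the composition is $-2+0+2=0$. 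You instead absorb the constants directly into the reference operator $\mathcal{L}_0[\mu^o,\mu^i,\eta^i,\rho^o,\rho^i]=\left(-\tfrac{1}{2}\mu^o+\rho^o,\ \tfrac{1}{2}\mu^i,\ -\tfrac{1}{2}\eta^i+\rho^i\right)$, which the splittings $L^2=L^2_0\oplus\R$ show to be an isomorphism from the constrained domain onto the target, and you check that $J_A-\mathcal{L}_0$ is compact (weakly singular operators, analytic-kernel cross terms since $\partial\Omega^o$ and $\partial\Omega^i$ are disjoint, compactness of $V_{\partial\Omega^i}$ on $L^2$, and finite-rank terms carrying $\rho^o,\rho^i$). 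This buys a shorter argument: no auxiliary three-factor decomposition, no computation of kernels and cokernels, and no appeal to additivity of the index under composition; the only extra bookkeeping is that the bare rank-one terms $-\rho^o$ and $-\rho^i$ left over in the difference are compact, which is immediate. Everything else --- the two-stage bootstrap via Theorems~\ref{regularityvecttheorem} and~\ref{regularitytheorem} (including the normalization by the factors $\pm 2$ needed to reach the form $I+\mathcal{K}$), the identification of the homogeneous integral system with problem \eqref{Aproblem}, and the surjectivity-by-regularity argument in the $C^0$ and $C^{0,\alpha}$ settings --- matches the paper's proof step for step.
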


\begin{proof} \ 
	We first prove (i). We write $J_A$ in the form $J_A = \tilde{J}^{+}_A \circ \tilde{J}_A \circ \tilde{J}^{-}_A$, 	where $\tilde{J}^{-}_A$ is the inclusion of $L^2(\partial\Omega^o)_0 \times L^2(\partial\Omega^i) \times L^2(\partial\Omega^i)_0 \times \R^2$ into $L^2(\partial\Omega^o) \times (L^2(\partial\Omega^i))^2 \times \R^2$, $\tilde{J}_A$ is the map from $L^2(\partial\Omega^o) \times (L^2(\partial\Omega^i))^2 \times \R^2$ into itself  that  takes $(\mu^o,\mu^i,\eta^i,\rho^o,\rho^i)$ to the quintuple $\tilde{J}_A[\mu^o,\mu^i,\eta^i,\rho^o,\rho^i]$ defined by
	\begin{align*}
	\tilde{J}_{A,1}[\mu^o,\mu^i,\eta^i,\rho^o,\rho^i] &\equiv \left( -\frac{1}{2} I + W^\ast_{\partial\Omega^o} \right) [\mu^o] + \nu_{\Omega^o} \cdot \nabla  v^-_{\Omega^i}[\mu^i]_{|\partial\Omega^o} \qquad \mbox{on } \partial\Omega^o,
	\\
	\tilde{J}_{A,2}[\mu^o,\mu^i,\eta^i,\rho^o,\rho^i] &\equiv \left( \frac{1}{2} I + W^\ast_{\partial\Omega^i} \right) [\mu^i] + \nu_{\Omega^i} \cdot \nabla  v^+_{\Omega^o}[\mu^o]_{|\partial\Omega^i} 
	\\
	 - (A_{11},A_{12}) &\cdot (v^+_{\Omega^o}[\mu^o]_{|\partial\Omega^i} + V_{\partial\Omega^i}[\mu^i] ,V_{\partial\Omega^i}[\eta^i] ) \qquad \mbox{on } \partial\Omega^i,
	\\
    \tilde{J}_{A,3}[\mu^o,\mu^i,\eta^i,\rho^o,\rho^i] &\equiv \left( -\frac{1}{2} I + W^\ast_{\partial\Omega^i} \right) [\eta^i]\\ -  (A_{21},A_{22})& \cdot (v^+_{\Omega^o}[\mu^o]_{|\partial\Omega^i} + V_{\partial\Omega^i}[\mu^i],V_{\partial\Omega^i}[\eta^i]) \qquad \mbox{on } \partial\Omega^i,
    \\
    \tilde{J}_{A,4}[\mu^o,\mu^i,\eta^i,\rho^o,\rho^i] &\equiv \rho^o,
    \\
    \tilde{J}_{A,5}[\mu^o,\mu^i,\eta^i,\rho^o,\rho^i] &\equiv \rho^i,
	\end{align*} 
	and $\tilde{J}^{+}_A$ is the map from $L^2(\partial\Omega^o) \times (L^2(\partial\Omega^i))^2 \times \R^2$ into $L^2(\partial\Omega^o) \times (L^2(\partial\Omega^i))^2$  that  takes a quintuple $(f,g_1,g_2,c_1,c_2)$ to the triple $\tilde{J}^{+}_A[f,g_1,g_2,c_1,c_2]$ defined by
	\begin{equation*}
	\tilde{J}^{+}_A[f,g_1,g_2,c_1,c_2] \equiv 
	(f, g_1 - (A_{11},A_{12}) \cdot (c_1,c_2),g_2 - (A_{21},A_{22}) \cdot (c_1,c_2)  ) .
	\end{equation*}
	Then we observe that $\tilde{J}^{+}_A$ is a Fredholm operator of index $2$, because $\mathrm{Coker}\, \tilde{J}^{+}_A = \{0\}$ and $\mathrm{Ker}\, \tilde{J}^{+}_A = \mathrm{Span}\, \{(0,A_{11},A_{21},1,0), (0,A_{12},A_{22},0,1)\}$, and that $\tilde{J}^{-}_A$ is Fredholm of index $-2$, because $\mathrm{Ker}\, \tilde{J}^{-}_A = \{0\}$ and $\mathrm{Coker}\, \tilde{J}^{-}_A = \mathrm{Span}\, \{(1,0,0,0,0), (0,0,1,0,0)\}$. Next, we observe that the map from $L^2(\partial\Omega^o) \times (L^2(\partial\Omega^i))^2 \times \R^2$ into itself  that  takes a quintuple $(\mu^o,\mu^i,\eta^i,\rho^o,\rho^i)$ to the quintuple $(-\frac{1}{2}\mu^o,\frac{1}{2}\mu^i,-\frac{1}{2}\eta^i,\rho^o,\rho^i)$ is  a linear isomorphism. Moreover, by the mapping properties of the integral operators with real analytic kernel and no singularity (cf. Lanza de Cristoforis and Musolino \cite[Prop. 4.1]{LaMu13}), by the compactness of the operators $W^\ast_{\partial\Omega^o}$ and $W^\ast_{\partial\Omega^i}$ from $L^2(\partial\Omega^o)$ to itself and from $L^2(\partial\Omega^i)$ to itself, respectively (see comments below Definition \ref{defV-W}), by the compactness of the operator $V_{\partial\Omega^i}$ from $L^2(\partial\Omega^i)$ into itself (see Costabel \cite[Thm. 1]{Co88}), and by the bilinearity and continuity of the product from $C^{0,\alpha}(\partial\Omega^i) \times L^2(\partial\Omega^i)$ to $L^2(\partial\Omega^i)$, we deduce that the map from $L^2(\partial\Omega^o) \times (L^2(\partial\Omega^i))^2 \times \R^2$ into itself  that  takes a quintuple $(\mu^o,\mu^i,\eta^i,\rho^o,\rho^i)$ to  the quintuple $\tilde{J}^C_A[\mu^o,\mu^i,\eta^i,\rho^o,\rho^i]$ defined by
	\begin{align*}
	\tilde{J}^C_{A,1}[\mu^o,\mu^i,\eta^i,\rho^o,\rho^i] &= W^\ast_{\partial\Omega^o} [\mu^o] + \nu_{\Omega^o} \cdot \nabla  v^-_{\Omega^i}[\mu^i]_{|\partial\Omega^o} \qquad \mbox{on } \partial\Omega^o,
	\\
	\tilde{J}^C_{A,2}[\mu^o,\mu^i,\eta^i,\rho^o,\rho^i] & = W^\ast_{\partial\Omega^i} [\mu^i] + \nu_{\Omega^i} \cdot \nabla  v^+_{\Omega^o}[\mu^o]_{|\partial\Omega^i} 
	\\
	 - (A_{11},A_{12}) & \cdot (v^+_{\Omega^o}[\mu^o]_{|\partial\Omega^i} + V_{\partial\Omega^i}[\mu^i] ,V_{\partial\Omega^i}[\eta^i] )
	\qquad \mbox{on } \partial\Omega^i,
	\\
	\tilde{J}^C_{A,3}[\mu^o,\mu^i,\eta^i,\rho^o,\rho^i] & = W^\ast_{\partial\Omega^i} [\eta^i] \\-  (A_{21},A_{22})&  \cdot (v^+_{\Omega^o}[\mu^o]_{|\partial\Omega^i} + V_{\partial\Omega^i}[\mu^i],V_{\partial\Omega^i}[\eta^i])
 \qquad \mbox{on } \partial\Omega^i,
	\\
	\tilde{J}^C_{A,4}[\mu^o,\mu^i,\eta^i,\rho^o,\rho^i] & = 0,
	\\
	\tilde{J}^C_{A,5}[\mu^o,\mu^i,\eta^i,\rho^o,\rho^i] & = 0,
	\end{align*} 
	is compact.	Hence, we conclude that $\tilde{J}_A$ is a compact perturbation of an isomorphism and therefore it is Fredholm of index 0. Since the index of a composition of Fredholm operators is the sum of the indexes of the components, we deduce that $J_A$ is a Fredholm operator of index $0$. Therefore, in order to complete the proof of point $(i)$, it suffices to prove that $J_A$ is injective. Thus, we now assume that $(\mu^o,\mu^i,\eta^i,\rho^o,\rho^i) \in L^2(\partial\Omega^o)_0 \times L^2(\partial\Omega^i) \times L^2(\partial\Omega^i)_0 \times \R^2$ and that
	\begin{equation}\label{J_A=0}
	J_A[\mu^o,\mu^i,\eta^i,\rho^o,\rho^i] = (0,0,0).
	\end{equation}
	We first verify that  $(\mu^o,\mu^i,\eta^i)$ is actually in $C^0(\partial\Omega^o)_0 \times C^0(\partial\Omega^i) \times C^0(\partial\Omega^i)_0$. In fact,  by the definition of $J_{A,1}$ in \eqref{J_A eq}, by the fact that $\nu_{\Omega^o} \cdot \nabla  v^-_{\Omega^i}[\mu^i]_{|\partial\Omega^o} \in C^{0}(\partial\Omega^o)$ (cf. Lanza de Cristoforis and Musolino \cite[Prop. 4.1]{LaMu13}), and by Theorem \ref{regularitytheorem}, we deduce that $\mu^o \in C^{0}(\partial\Omega^o)$. Let $\mathcal{K}\equiv(\mathcal{K}_1, \mathcal{K}_2)$ be the map from $L^2(\partial\Omega^i)\times L^2(\partial\Omega^i)_0$ to itself  that takes a pair $(\mu^i,\eta^i)\in L^2(\partial\Omega^i)\times L^2(\partial\Omega^i)_0$ to
	\begin{align*}
	\mathcal{K}_1[\mu^i,\eta^i] &\equiv 2W^\ast_{\partial\Omega^i}[\mu^i] - 2A_{11} V_{\partial\Omega^i}[\mu^i] - 2A_{12}V_{\partial\Omega^i}[\eta^i] &&\mbox{on } \partial\Omega^i,
	\\
	\mathcal{K}_2[\mu^i,\eta^i] &\equiv -2W^\ast_{\partial\Omega^i} [\eta^i] 
	+2 A_{21} V_{\partial\Omega^i}[\mu^i] +2 A_{22} V_{\partial\Omega^i}[\eta^i]   &&\mbox{on } \partial\Omega^i.
	\end{align*}
	Notice that each component of $\mathcal{K}$ is a linear combinations of integral operators  with a continuous kernel of order $n-2$ (see Definition \ref{kerneldef} and comments below). By the fact that $\mu^o \in C^0(\partial\Omega^i)$ and by the first condition in \eqref{Acondition}, we know that  
	$2(A_{11},A_{12}) \cdot (v^+_{\Omega^o}[\mu^o]_{|\partial\Omega^i} + \rho^o , \rho^i  ), -2(A_{21},A_{22}) \cdot (v^+_{\Omega^o}[\mu^o]_{|\partial\Omega^i} + \rho^o ,\rho^i  )$ belong to $C^{0,\alpha}(\partial\Omega^i) \subseteq C^{0}(\partial\Omega^i)$. 
	Then \eqref{J_A=0} and the definition of the operator $\mathcal{K}$ imply that   
$(I + \mathcal{K})[\mu^i,\eta^i] \in (C^{0}(\partial\Omega^i))^2$.	Hence, by  Theorem  \ref{regularityvecttheorem} we conclude that $(\mu^i,\eta^i) \in  C^0(\partial\Omega^i) \times C^0(\partial\Omega^i)_0$. Then by mapping properties of integral operators with real analytic kernel and no singularity (cf. Lanza de Cristoforis and Musolino \cite[Prop. 4.1]{LaMu13}) and by classical known results in potential theory (cf.~Miranda \cite[Chap. II, \S 14]{Mi70}), we know that
	 $ \nu_{\Omega^o} \cdot \nabla  v^-_{\Omega^i}[\mu^i]_{|\partial\Omega^o} \in C^{0,\alpha}(\partial\Omega^o)$ and	$v^+_{\Omega^o}[\mu^o]_{|\partial\Omega^i}, \, \nu_{\Omega^i} \cdot \nabla  v^+_{\Omega^o}[\mu^o]_{|\partial\Omega^i}, \, V_{\partial\Omega^i}[\eta^i], \,
	V_{\partial\Omega^i}[\mu^i]\in C^{0,\alpha}(\partial\Omega^i)$.	Hence, by \eqref{J_A=0} and by the membership of $A \in  M_2(C^{0,\alpha} (\partial\Omega^i))$ (cf.~first condition in \eqref{Acondition}), we obtain that $\left( -\frac{1}{2} I + W^\ast_{\partial\Omega^o} \right) [\mu^o] \in C^{0,\alpha}(\partial\Omega^o)$	and
$	 \left( \frac{1}{2} I + W^\ast_{\partial\Omega^i} \right) [\mu^i], \left( -\frac{1}{2} I + W^\ast_{\partial\Omega^i} \right) [\eta^i] \in C^{0,\alpha}(\partial\Omega^i)$.	Then Theorem \ref{regularitytheorem} implies $(\mu^o,\mu^i,\eta^i) \in C^{0,\alpha}(\partial\Omega^o)_0 \times C^{0,\alpha}(\partial\Omega^i) \times C^{0,\alpha}(\partial\Omega^i)_0$. By the jump relations (cf.~Theorem \ref{sdp} (iii)), by Lemma  \ref{rapprharm}, and by \eqref{J_A=0}, we deduce that the pair  $(U^o_{\Omega^i}[\mu^o,\mu^i,\eta^i,\rho^o,\rho^i],U^i_{\Omega^i}[\mu^o,\mu^i,\eta^i,\rho^o,\rho^i])$ defined by \eqref{U^o,U^i} is a solution of the boundary value problem \eqref{Aproblem}.
Then by Lemma \ref{Alemma}, we have that  
\[
(U^o_{\Omega^i}[\mu^o,\mu^i,\eta^i,\rho^o,\rho^i],U^i_{\Omega^i}[\mu^o,\mu^i,\eta^i,\rho^o,\rho^i])=(0,0),\]
which implies $(\mu^o,\mu^i,\eta^i,\rho^o,\rho^i)=(0,0,0,0,0)$, by the uniqueness of the representation provided by Lemma \ref{rapprharm}. We now prove statement (ii). First we note that  the integral operators that appear in the definition \eqref{J_A eq} of $J_A$  have either a weakly singular or a real analytic kernel. It follows that
 $J_A$ is continuous from $C^0(\partial\Omega^o)_0 \times C^0(\partial\Omega^i) \times C^0(\partial\Omega^i)_0 \times \R^2$ to $C^0(\partial\Omega^o) \times (C^0(\partial\Omega^i))^2$ (cf.~Lanza de Cristoforis and Musolino \cite[Prop. 4.1]{LaMu13}  for the properties of integral operators with real analytic kernels). Then we observe that,  by Theorems  \ref{regularityvecttheorem} and \ref{regularitytheorem}, if we have $	J_A[\mu^o,\mu^i,\eta^i,\rho^o,\rho^i] \in C^0(\partial\Omega^o) \times (C^0(\partial\Omega^i))^2$		for some $(\mu^o,\mu^i,\eta^i,\rho^o,\rho^i) \in L^2(\partial\Omega^o)_0 \times L^2(\partial\Omega^i) \times L^2(\partial\Omega^i)_0 \times \R^2$,
		then $(\mu^o,\mu^i,\eta^i) \in C^0(\partial\Omega^o)_0 \times C^0(\partial\Omega^i) \times C^0(\partial\Omega^i)_0$ (see also the argument used after \eqref{J_A=0} to prove that $(\mu^o,\mu^i,\eta^i)$ belongs to $C^0(\partial\Omega^o)_0 \times C^0(\partial\Omega^i) \times C^0(\partial\Omega^i)_0$). Then, by statement (i) we deduce that $J_A$ is a bijective continuous linear map from $C^0(\partial\Omega^o)_0 \times C^0(\partial\Omega^i) \times C^0(\partial\Omega^i)_0 \times \R^2$ to $C^0(\partial\Omega^o) \times (C^0(\partial\Omega^i))^2$. By the Open Mapping Theorem it follows that $J_A$ is a linear homeomorphism from $C^0(\partial\Omega^o)_0 \times C^0(\partial\Omega^i) \times C^0(\partial\Omega^i)_0 \times \R^2$ to $C^0(\partial\Omega^o) \times (C^0(\partial\Omega^i))^2$.
		The proof of statement (iii) is similar to that of statement (ii) and we leave it to the zealous reader (see also the argument used after \eqref{J_A=0} to prove that $(\mu^o,\mu^i,\eta^i)$ belongs to $C^{0,\alpha}(\partial\Omega^o)_0 \times C^{0,\alpha}(\partial\Omega^i) \times C^{0,\alpha}(\partial\Omega^i)_0$).
\end{proof}
We are now ready to convert  \eqref{princeq} into a system of integral equations. 
\begin{prop}\label{propintsys}
	Let $A$ be as in \eqref{Acondition}. Let $(\mu^o,\mu^i,\eta^i,\rho^o,\rho^i) \in C^{0,\alpha}(\partial\Omega^o)_0 \times C^{0,\alpha}(\partial\Omega^i) \times C^{0,\alpha}(\partial\Omega^i)_0 \times \R^2$. Let $(U^o_{\Omega^i}[\cdot,\cdot,\cdot,\cdot,\cdot],U^i_{\Omega^i}[\cdot,\cdot,\cdot,\cdot,\cdot])$ be defined by \eqref{U^o,U^i}. Let $J_A$ be as in Proposition \ref{J_A}.
	Then $(U^o_{\Omega^i}[\mu^o,\mu^i,\eta^i,\rho^o,\rho^i],U^i_{\Omega^i}[\mu^o,\mu^i,\eta^i,\rho^o,\rho^i])$  is a solution of  \eqref{princeq} if and only if 
	\begin{equation}\label{princintsys}
	\begin{aligned}
	\begin{pmatrix}
	\mu^o
	\\
	\mu^i
	\\
	\eta^i
	\\
	\rho^o
	\\ 
	\rho^i
	\end{pmatrix}
	= J_A^{(-1)}&
	\left[
	\begin{pmatrix}
	f^o
	\\
	\mathcal{N}_{F_1}(v^+_{\Omega^o}[\mu^o]_{|\partial\Omega^i} + V_{\partial\Omega^i}[\mu^i] +\rho^o ,V_{\partial\Omega^i}[\eta^i] +\rho^i)
	\\
	\mathcal{N}_{F_2}(v^+_{\Omega^o}[\mu^o]_{|\partial\Omega^i} + V_{\partial\Omega^i}[\mu^i] +\rho^o ,V_{\partial\Omega^i}[\eta^i] +\rho^i)
	\end{pmatrix}
	\right. 
	\\
	& \left.
	- \begin{pmatrix}
	0 & 0 & 0 \\
	0 & A_{11} & A_{12} \\
	0 & A_{21} & A_{22} 
	\end{pmatrix} 
	\begin{pmatrix}
	0
	\\
	v^+_{\Omega^o}[\mu^o]_{|\partial\Omega^i} + V_{\partial\Omega^i}[\mu^i] +\rho^o
	\\
	V_{\partial\Omega^i}[\eta^i] +\rho^i
	\end{pmatrix} \right].
	\end{aligned}
	\end{equation}
\end{prop}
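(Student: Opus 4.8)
The plan is to let Lemma \ref{rapprharm} dispose of the bulk equations for free and then to translate the three boundary conditions of \eqref{princeq} into integral equations by means of the jump formulas of Theorem \ref{sdp} (iii). Since the quintuple $(\mu^o,\mu^i,\eta^i,\rho^o,\rho^i)$ ranges in the H\"older space of the statement, Lemma \ref{rapprharm} guarantees that the pair $(U^o_{\Omega^i}[\mu^o,\mu^i,\eta^i,\rho^o,\rho^i],U^i_{\Omega^i}[\mu^o,\mu^i,\eta^i,\rho^o,\rho^i])$ belongs to $C^{1,\alpha}_{\mathrm h}(\overline{\Omega^o}\setminus\Omega^i)\times C^{1,\alpha}_{\mathrm h}(\overline{\Omega^i})$; in particular the two Laplace equations and the regularity required in \eqref{princeq} hold automatically, for every choice of the quintuple. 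Hence solving \eqref{princeq} is equivalent to imposing only the three boundary conditions, and the whole argument reduces to bookkeeping of traces and normal derivatives of single layer potentials.

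First I would compute those traces. For the Neumann datum on $\partial\Omega^o$, the potential $v^+_{\Omega^o}[\mu^o]$ is approached from inside $\Omega^o$, so by Theorem \ref{sdp} (iii) its normal derivative equals $(-\frac12 I+W^\ast_{\partial\Omega^o})[\mu^o]$, while $v^-_{\Omega^i}[\mu^i]$ is smooth near $\partial\Omega^o$ and $\rho^o$ is constant; this gives precisely $J_{A,1}[\mu^o,\mu^i,\eta^i,\rho^o,\rho^i]=f^o$. On $\partial\Omega^i$, using the continuity of $v_{\Omega^i}[\mu^i]$ across the boundary, the exterior trace of $U^o_{\Omega^i}$ is $u^o_{|\partial\Omega^i}=v^+_{\Omega^o}[\mu^o]_{|\partial\Omega^i}+V_{\partial\Omega^i}[\mu^i]+\rho^o$ and the trace of $U^i_{\Omega^i}$ is $u^i_{|\partial\Omega^i}=V_{\partial\Omega^i}[\eta^i]+\rho^i$. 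Again by Theorem \ref{sdp} (iii), the two relevant normal derivatives are $(\frac12 I+W^\ast_{\partial\Omega^i})[\mu^i]+\nu_{\Omega^i}\cdot\nabla v^+_{\Omega^o}[\mu^o]_{|\partial\Omega^i}$ on the exterior side and $(-\frac12 I+W^\ast_{\partial\Omega^i})[\eta^i]$ on the interior side.

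Next I would recognise $J_A$. The two transmission conditions of \eqref{princeq}, written through the superposition operators, read $(\frac12 I+W^\ast_{\partial\Omega^i})[\mu^i]+\nu_{\Omega^i}\cdot\nabla v^+_{\Omega^o}[\mu^o]_{|\partial\Omega^i}=\mathcal{N}_{F_1}(u^o_{|\partial\Omega^i},u^i_{|\partial\Omega^i})$ and $(-\frac12 I+W^\ast_{\partial\Omega^i})[\eta^i]=\mathcal{N}_{F_2}(u^o_{|\partial\Omega^i},u^i_{|\partial\Omega^i})$. The key algebraic step is to add and subtract the linearisation: subtracting $(A_{j1},A_{j2})\cdot(u^o_{|\partial\Omega^i},u^i_{|\partial\Omega^i})$ from both sides of the $j$-th condition turns the left-hand sides into $J_{A,2}$ and $J_{A,3}$ of \eqref{J_A eq}, while the subtracted terms reassemble on the right into the matrix $\begin{pmatrix}0&0&0\\0&A_{11}&A_{12}\\0&A_{21}&A_{22}\end{pmatrix}$ acting on $(0,u^o_{|\partial\Omega^i},u^i_{|\partial\Omega^i})^{T}$. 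Together with $J_{A,1}=f^o$, this shows that the three boundary conditions hold if and only if $J_A[\mu^o,\mu^i,\eta^i,\rho^o,\rho^i]$ equals the bracketed right-hand side of \eqref{princintsys}. Since $J_A$ is an isomorphism on the stated H\"older spaces by Proposition \ref{J_A} (iii), I apply $J_A^{(-1)}$ to both sides to obtain \eqref{princintsys}; conversely, any quintuple satisfying \eqref{princintsys} gives back $J_A[\cdots]=(\mathrm{RHS})$ and hence the three conditions, which closes the equivalence.

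I do not expect a genuine obstacle here: the proposition is a reformulation, not an existence result. The only delicate point is the sign bookkeeping in the jump relations, namely keeping track of which single layer is approached from the interior and which from the exterior, together with the observation that the $A$-terms built into $J_A$ are exactly cancelled by the $A$-terms appearing on the right-hand side of \eqref{princintsys}. This cancellation is what makes the fixed-point equation encode the original nonlinear conditions and nothing more, the matrix $A$ serving only to render $J_A$ invertible through Lemma \ref{Alemma} and Proposition \ref{J_A}.
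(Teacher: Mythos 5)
Your proposal is correct and follows essentially the same route as the paper: Lemma \ref{rapprharm} makes the bulk equations and regularity automatic, the jump relations of Theorem \ref{sdp} (iii) turn the three boundary conditions into the integral system, and subtracting the $A$-terms converts its left-hand side into $J_A$, which is then inverted via Proposition \ref{J_A} (iii). The sign bookkeeping ($v^-_{\Omega^i}$ approached from the exterior giving $+\tfrac{1}{2}$, $v^+_{\Omega^o}$ and $v^+_{\Omega^i}$ from the interior giving $-\tfrac{1}{2}$) and the cancellation of the $A$-terms match the paper's argument exactly.
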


\begin{proof} \ 
	By Lemma \ref{rapprharm} and by the jump relations of Theorem \ref{sdp}, we know that if $(\mu^o,\mu^i,\eta^i,\rho^o,\rho^i) \in C^{0,\alpha}(\partial\Omega^o)_0 \times C^{0,\alpha}(\partial\Omega^i) \times C^{0,\alpha}(\partial\Omega^i)_0 \times \R^2$ then the pair \[(U^o_{\Omega^i}[\mu^o,\mu^i,\eta^i,\rho^o,\rho^i],U^i_{\Omega^i}[\mu^o,\mu^i,\eta^i,\rho^o,\rho^i])\]
	defined by \eqref{U^o,U^i} is a solution of problem \eqref{princeq} if an only if
	\begin{equation}\label{intsys}
	\begin{split}
	& \begin{pmatrix}
	\left( -\frac{1}{2} I + W^\ast_{\partial\Omega^o} \right) [\mu^o] + \nu_{\Omega^o} \cdot \nabla  v^-_{\Omega^i}[\mu^i]_{|\partial\Omega^o}
	\\
	\left( \frac{1}{2} I + W^\ast_{\partial\Omega^i} \right) [\mu^i] + \nu_{\Omega^i} \cdot \nabla  v^+_{\Omega^o}[\mu^o]_{|\partial\Omega^i} 
	\\
	\left( -\frac{1}{2} I + W^\ast_{\partial\Omega^i} \right) [\eta^i] 
	\end{pmatrix}
	\\ &\qquad =
	\begin{pmatrix}
	f^o
	\\
	\mathcal{N}_{F_1}(v^+_{\Omega^o}[\mu^o]_{|\partial\Omega^i} + V_{\partial\Omega^i}[\mu^i] +\rho^o ,V_{\partial\Omega^i}[\eta^i] +\rho^i)
	\\
	\mathcal{N}_{F_2}(v^+_{\Omega^o}[\mu^o]_{|\partial\Omega^i} + V_{\partial\Omega^i}[\mu^i] +\rho^o ,V_{\partial\Omega^i}[\eta^i] +\rho^i)
	\end{pmatrix}.
	\end{split}
	\end{equation}
	Then, by subtracting in both sides of \eqref{intsys} the term
	\begin{equation*}
	\begin{pmatrix}
	0 & 0 & 0 \\
	0 & A_{11} & A_{12} \\
	0 & A_{21} & A_{22} 
	\end{pmatrix} 
	\begin{pmatrix}
	0
	\\
	v^+_{\Omega^o}[\mu^o]_{|\partial\Omega^i} + V_{\partial\Omega^i}[\mu^i] +\rho^o
	\\
	V_{\partial\Omega^i}[\eta^i] +\rho^i
	\end{pmatrix}
	\in C^{0,\alpha}(\partial\Omega^o) \times (C^{0,\alpha}(\partial\Omega^i))^2
	\end{equation*} 
	and by the invertibility of $J_A$ from $C^{0,\alpha}(\partial\Omega^o)_0 \times C^{0,\alpha}(\partial\Omega^i) \times C^{0,\alpha}(\partial\Omega^i)_0 \times \R^2$ to $C^{0,\alpha}(\partial\Omega^o) \times (C^{0,\alpha}(\partial\Omega^i))^2$ provided by Proposition \ref{J_A} (iii), the validity of the statement follows. 
\end{proof}

We now introduce an auxiliary map. If $A$ is as in \eqref{Acondition} and  $J_A$ is as in Proposition \ref{J_A}, we denote by $T_A$ the map from $C^0(\partial\Omega^o)_0 \times C^0(\partial\Omega^i) \times C^0(\partial\Omega^i)_0 \times \R^2$ to $C^0(\partial\Omega^o) \times (C^0(\partial\Omega^i))^2$ defined by
\begin{equation}\label{T}
\begin{aligned}
T_A&(\mu^o,\mu^i,\eta^i,\rho^o,\rho^i) \\&\equiv J_A^{(-1)}
\left[
\begin{pmatrix}
f^o
\\
\mathcal{N}_{F_1}(v^+_{\Omega^o}[\mu^o]_{|\partial\Omega^i} + V_{\partial\Omega^i}[\mu^i] +\rho^o ,V_{\partial\Omega^i}[\eta^i] +\rho^i)
\\
\mathcal{N}_{F_2}(v^+_{\Omega^o}[\mu^o]_{|\partial\Omega^i} + V_{\partial\Omega^i}[\mu^i] +\rho^o ,V_{\partial\Omega^i}[\eta^i] +\rho^i)
\end{pmatrix}
\right. 
\\
& \left.
- \begin{pmatrix}
0 & 0 & 0 \\
0 & A_{11} & A_{12} \\
0 & A_{21} & A_{22} 
\end{pmatrix} 
\begin{pmatrix}
0
\\
v^+_{\Omega^o}[\mu^o]_{|\partial\Omega^i} + V_{\partial\Omega^i}[\mu^i] +\rho^o
\\
V_{\partial\Omega^i}[\eta^i] +\rho^i
\end{pmatrix} \right]\, .
\end{aligned}
\end{equation}

We study the continuity and compactness of $T_A$ in the following proposition.

\begin{prop}\label{Tcontcomp}
	Let $A$ be as in \eqref{Acondition}. Let $T_A$ be as in \eqref{T}. Then $T_A$ is a continuous (nonlinear) operator from $C^0(\partial\Omega^o)_0 \times C^0(\partial\Omega^i) \times C^0(\partial\Omega^i)_0 \times \R^2$ to $C^0(\partial\Omega^o) \times (C^0(\partial\Omega^i))^2$ and  is compact. 
\end{prop}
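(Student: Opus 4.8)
The plan is to exploit the factorisation $T_A = J_A^{(-1)} \circ G$, where $G$ denotes the (nonlinear) map that sends a quintuple $(\mu^o,\mu^i,\eta^i,\rho^o,\rho^i)$ to the triple appearing inside the square brackets in \eqref{T}. By Proposition \ref{J_A} (ii) the operator $J_A^{(-1)}$ is a continuous linear isomorphism between the relevant $C^0$ product spaces, so the continuity of $T_A$ is equivalent to that of $G$, while the compactness of $T_A$ follows at once from that of $G$, since a continuous linear map carries relatively compact sets to relatively compact sets. Hence I would reduce everything to proving that $G$ is continuous and compact.

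Next I would isolate the linear map $L$ sending $(\mu^o,\mu^i,\eta^i,\rho^o,\rho^i)$ to the pair of ``arguments''
\[
(w_1,w_2) \equiv \bigl(v^+_{\Omega^o}[\mu^o]_{|\partial\Omega^i} + V_{\partial\Omega^i}[\mu^i] + \rho^o,\ V_{\partial\Omega^i}[\eta^i] + \rho^i\bigr),
\]
which enter both the superposition operators and the matrix term. Because $\overline{\Omega^i} \subset \Omega^o$, the kernel of $\mu^o \mapsto v^+_{\Omega^o}[\mu^o]_{|\partial\Omega^i}$ has no singularity and is real analytic, so this map is continuous from $C^0(\partial\Omega^o)$ into $C^{0,\alpha}(\partial\Omega^i)$ (cf.\ Lanza de Cristoforis and Musolino \cite[Prop.\ 4.1]{LaMu13}); the single layer trace $V_{\partial\Omega^i}$ is weakly singular and maps $C^0(\partial\Omega^i)$ continuously into $C^{0,\alpha}(\partial\Omega^i)$ (cf.\ Miranda \cite[Chap.\ II, \S 14]{Mi70}); and the constants $\rho^o,\rho^i$ contribute finite-rank terms. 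It follows that $L$ is continuous from the quintuple $C^0$-space into $(C^{0,\alpha}(\partial\Omega^i))^2$, and therefore, composing with the compact embedding $C^{0,\alpha}(\partial\Omega^i) \hookrightarrow C^0(\partial\Omega^i)$ (Ascoli--Arzel\`{a}), that $L$ is \emph{compact} as a map into $(C^0(\partial\Omega^i))^2$.

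Finally I would reassemble $G$. Its first component is the fixed datum $f^o$, a constant (hence trivially compact and continuous) map. Its second and third components are $\mathcal{N}_{F_j}(w_1,w_2) - A_{j1}w_1 - A_{j2}w_2$ for $j \in \{1,2\}$, with $(w_1,w_2) = L(\mu^o,\mu^i,\eta^i,\rho^o,\rho^i)$. Since $F_1,F_2 \in C^0(\partial\Omega^i \times \R \times \R)$, the superposition operators $\mathcal{N}_{F_j}$ are continuous from $(C^0(\partial\Omega^i))^2$ into $C^0(\partial\Omega^i)$: if a sequence of pairs converges uniformly, its values remain in a compact subset of $\partial\Omega^i \times \R^2$ on which $F_j$ is uniformly continuous, whence the images converge uniformly. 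The multiplication operators $v \mapsto A_{jk}v$ are continuous and linear on $C^0(\partial\Omega^i)$, because $A_{jk} \in C^{0,\alpha}(\partial\Omega^i) \subseteq C^0(\partial\Omega^i)$ by the first condition in \eqref{Acondition}, so the continuity of $G$ is immediate. For compactness I would invoke the elementary but decisive remark that \emph{post}-composing a compact map with a continuous map yields a compact map: although the Nemytskii operators are not themselves compact, each $\mathcal{N}_{F_j} \circ L$ is compact (as $L$ is compact and $\mathcal{N}_{F_j}$ continuous), and likewise each $A_{jk}\,(\cdot) \circ L$ is compact as a continuous linear operator composed with a compact one. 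As a finite sum of compact maps plus the constant map $f^o$, the map $G$ is compact, and composing with $J_A^{(-1)}$ concludes. I expect the main obstacle to be exactly this last point: the superposition operators are generically non-compact, so the compactness of $T_A$ cannot come from them and must instead be extracted from the smoothing (hence compactifying) effect of the layer-potential operators feeding their arguments.
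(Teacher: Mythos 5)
Your proposal is correct and follows essentially the same route as the paper's proof: compactness is extracted from the smoothing property of the layer potential operators (mapping $C^0$ into $C^{0,\alpha}$, composed with the compact embedding $C^{0,\alpha}(\partial\Omega^i)\hookrightarrow C^0(\partial\Omega^i)$), the Nemytskii operators are only required to be continuous, and the isomorphism property of $J_A$ in the $C^0$ setting from Proposition \ref{J_A} (ii) transports continuity and compactness to $T_A$. Your explicit isolation of the map $L$ and the remark that compactness cannot come from the superposition operators themselves merely make explicit what the paper's argument uses implicitly.
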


\begin{proof} \ 
	By the properties of integral operators with real analytic kernel and no singularities (cf. Lanza de Cristoforis and Musolino \cite[Prop. 4.1]{LaMu13}) and by the compactness of the embedding of $C^{0,\alpha}(\partial\Omega^i)$ into $C^0(\partial\Omega^i)$, $v^+_{\Omega^o}[\cdot]_{|\partial\Omega^i}$ is compact from $C^0(\partial\Omega^o)_0$ into $C^0(\partial\Omega^i)$. By mapping properties of the single layer potential (cf.~Miranda \cite[Chap.~II, \S14, III]{Mi70}) and by the compactness of the embedding of $C^{0,\alpha}(\partial\Omega^i)$ into $C^0(\partial\Omega^i)$, $V_{\partial\Omega^i}$ is compact from $C^0(\partial\Omega^i)$ into itself. Hence, by the bilinearity and continuity of the product of continuous functions, the map from $C^0(\partial\Omega^o)_0 \times C^0(\partial\Omega^i) \times C^0(\partial\Omega^i)_0 \times \R^2$ to $C^0(\partial\Omega^o) \times (C^0(\partial\Omega^i))^2$  that  takes the quintuple $(\mu^o,\mu^i,\eta^i,\rho^o,\rho^i)$ to the triple given by
	\begin{equation*}
	\begin{pmatrix}
	0 & 0 & 0 \\
	0 & A_{11} & A_{12} \\
	0 & A_{21} & A_{22} 
	\end{pmatrix} 
	\begin{pmatrix}
	0
	\\
	v^+_{\Omega^o}[\mu^o]_{|\partial\Omega^i} + V_{\partial\Omega^i}[\mu^i] +\rho^o
	\\
	V_{\partial\Omega^i}[\eta^i] +\rho^i
	\end{pmatrix}
	\end{equation*} 
	is continuous and maps bounded sets into sets with compact closure, {i.e.}, is compact. Moreover, by assumption \eqref{introfunconditions}, one readily verifies that the operators $\mathcal{N}_{F_1}$ and $\mathcal{N}_{F_2}$ are continuous from $(C^0(\partial\Omega^i))^2$ into $C^0(\partial\Omega^i)$.  Hence the map from $C^0(\partial\Omega^o)_0 \times C^0(\partial\Omega^i) \times C^0(\partial\Omega^i)_0$ to $C^0(\partial\Omega^o) \times (C^0(\partial\Omega^i))^2$  that  takes the quintuple $(\mu^o,\mu^i,\eta^i,\rho^o,\rho^i)$ to the triple 
	\begin{equation*}
	\begin{pmatrix}
	f^0
	\\
	\mathcal{N}_{F_1}(v^+_{\Omega^o}[\mu^o]_{|\partial\Omega^i} + V_{\partial\Omega^i}[\mu^i] +\rho^o ,V_{\partial\Omega^i}[\eta^i] +\rho^i)
	\\
	\mathcal{N}_{F_2}(v^+_{\Omega^o}[\mu^o]_{|\partial\Omega^i} + V_{\partial\Omega^i}[\mu^i] +\rho^o ,V_{\partial\Omega^i}[\eta^i] +\rho^i)
	\end{pmatrix}
	\end{equation*}
	is compact. Finally, by Proposition \ref{J_A} (ii),  $J_A$ is a linear isomorphism from $C^0(\partial\Omega^o)_0 \times C^0(\partial\Omega^i) \times C^0(\partial\Omega^i)_0 \times \R^2$ to $C^0(\partial\Omega^o) \times (C^0(\partial\Omega^i))^2$ and, accordingly, $T_A$ is compact.
\end{proof}

In what follows we will assume the following growth condition on the pair  $(F_1,F_2)$ with respect to the matrix function $A$  defined as in \eqref{Acondition}:
\begin{equation}\label{conditionF1F2}
\begin{split}
\bullet\, &\mbox{There exist two constants } C_F \in ]0,+\infty[ \mbox{ and } \delta \in ]0,1[ \mbox{ such that}
\\
& 
\qquad
\left|\begin{pmatrix}
F_1(x, \zeta_1,\zeta_2)
\\
F_2(x, \zeta_1,\zeta_2)
\end{pmatrix}
- A(x) 
\begin{pmatrix}
\zeta_1
\\
\zeta_2
\end{pmatrix}
\right|
\leq C_F (1 + |\zeta_1| + |\zeta_2|)^\delta
\\& \mbox{for all } (x,\zeta_1,\zeta_2) \in \partial\Omega^i \times \R^2.
\end{split}
\end{equation}
 In Proposition \ref{prop mu_0} below we prove the existence of a solution in $C^0(\partial\Omega^o)_0 \times C^0(\partial\Omega^i) \times C^0(\partial\Omega^i)_0 \times \R^2$ of \eqref{princintsys}. Our argument exploits the Leray-Schauder Fixed-Point Theorem (cf.~Gilbarg and Trudinger \cite[Thm.~11.3]{GiTr83}). 

\begin{teo}[Leray-Schauder Theorem]\label{Thm Leray Schauder}
	Let $X$ be a Banach space. Let $T$ be a continuous operator from $X$ into itself. If $T$ is compact and there exists a constant $M \in ]0,+\infty[$ such that $\|x\|_{X} \leq M$ for all $(x,\lambda) \in X \times [0,1]$ satisfying $x=\lambda T(x)$, then $T$ has at least one fixed point $x \in X$ such that $\|x\|_{X} \leq M$.
\end{teo}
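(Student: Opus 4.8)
The plan is to deduce the statement from Schauder's fixed point theorem by means of a radial truncation (retraction) of $T$ onto a suitable closed ball, and then to use the a priori bound to show that the truncation is inactive at the fixed point so produced.

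First I would fix a radius $R \in \, ]M,+\infty[$ and consider the closed ball $\overline{B} \equiv \{ x \in X : \|x\|_X \le R \}$, which is a nonempty, closed, bounded, convex subset of $X$. I would then introduce the radial retraction $P : X \to \overline{B}$ defined by $P(y) \equiv y$ whenever $\|y\|_X \le R$ and $P(y) \equiv R\,y/\|y\|_X$ whenever $\|y\|_X > R$, and set $T_R \equiv P \circ T$. The map $P$ is continuous, and therefore $T_R$ is a continuous self-map of $\overline{B}$. Moreover, since $T$ is compact, $T(\overline{B})$ is relatively compact in $X$, and hence its continuous image $P(T(\overline{B})) = T_R(\overline{B})$ is relatively compact as well; thus $T_R$ is a continuous and compact self-map of $\overline{B}$.

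The main step is then to invoke Schauder's fixed point theorem, which guarantees the existence of a fixed point $x_\ast \in \overline{B}$ of $T_R$. This is the one genuinely nontrivial ingredient of the argument: the Schauder theorem (every continuous compact self-map of a nonempty closed bounded convex subset of a Banach space has a fixed point) carries the underlying topological content and is where the real difficulty lies; all the remaining reasoning is elementary. One could equivalently regard the whole statement as an instance of Schaefer's fixed point theorem.

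It remains to show that $x_\ast$ is in fact a fixed point of $T$ and satisfies $\|x_\ast\|_X \le M$. I would argue by contradiction: suppose that $\|T(x_\ast)\|_X > R$. Then $x_\ast = T_R(x_\ast) = P(T(x_\ast)) = R\,T(x_\ast)/\|T(x_\ast)\|_X$, so that $\|x_\ast\|_X = R$ and $x_\ast = \lambda\, T(x_\ast)$ with $\lambda \equiv R/\|T(x_\ast)\|_X \in \, ]0,1[$. By the a priori bound hypothesis this forces $\|x_\ast\|_X \le M < R$, contradicting $\|x_\ast\|_X = R$. Hence $\|T(x_\ast)\|_X \le R$, which gives $P(T(x_\ast)) = T(x_\ast)$ and therefore $x_\ast = T_R(x_\ast) = T(x_\ast)$, so that $x_\ast$ is a fixed point of $T$. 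Finally, applying the a priori bound once more with $\lambda = 1$ (since $x_\ast = T(x_\ast)$ solves $x = \lambda T(x)$) yields $\|x_\ast\|_X \le M$, as required.
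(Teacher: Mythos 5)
Your proof is correct. Note that the paper does not prove this theorem at all: it is quoted as a classical result with a reference to Gilbarg and Trudinger \cite[Thm.~11.3]{GiTr83}, and the argument you give --- truncating $T$ by the radial retraction onto a ball of radius $R>M$, applying Schauder's fixed point theorem to the truncated map, and then using the a priori bound with $\lambda = R/\|T(x_\ast)\|_X \in \,]0,1[$ to rule out an active truncation and with $\lambda=1$ to get $\|x_\ast\|_X\le M$ --- is precisely the standard proof found in that reference, so your proposal reconstructs the intended argument faithfully.
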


Then we have the following.

\begin{prop}\label{prop mu_0}
	Let $A$ be as in \eqref{Acondition}. Let assumption \eqref{conditionF1F2} holds. Let $J_A$ be as in Proposition \ref{J_A}. Then the nonlinear system \eqref{princintsys} has at least one solution \[(\mu^o_0,\mu^i_0,\eta^i_0,\rho^o_0,\rho^i_0) \in C^0(\partial\Omega^o)_0 \times C^0(\partial\Omega^i) \times C^0(\partial\Omega^i)_0 \times \R^2.\] 
\end{prop}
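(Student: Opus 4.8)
The plan is to apply the Leray-Schauder Theorem (Theorem \ref{Thm Leray Schauder}) to the operator $T_A$ of \eqref{T} on the Banach space $X \equiv C^0(\partial\Omega^o)_0 \times C^0(\partial\Omega^i) \times C^0(\partial\Omega^i)_0 \times \R^2$. First I would note that, by the definition \eqref{T} of $T_A$ and the invertibility of $J_A$ in the $C^0$ setting (Proposition \ref{J_A} (ii)), the nonlinear system \eqref{princintsys} is \emph{exactly} the fixed-point equation
\[
(\mu^o,\mu^i,\eta^i,\rho^o,\rho^i) = T_A(\mu^o,\mu^i,\eta^i,\rho^o,\rho^i).
\]
Proposition \ref{Tcontcomp} already provides that $T_A$ is a continuous and compact operator from $X$ to $X$, so the only hypothesis of Theorem \ref{Thm Leray Schauder} left to check is the existence of a constant $M$ bounding the norm of every solution of $x = \lambda T_A(x)$, uniformly in $\lambda \in [0,1]$.

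The a priori bound is the heart of the matter, and it is where the growth condition \eqref{conditionF1F2} enters. Writing $T_A = J_A^{(-1)} \circ R$, where $R$ is the right-hand side map displayed inside the brackets in \eqref{T}, and letting $C_0$ be the operator norm of the bounded linear map $J_A^{(-1)}$ (finite by Proposition \ref{J_A} (ii) and the Open Mapping Theorem), I would estimate, for any solution $x=(\mu^o,\mu^i,\eta^i,\rho^o,\rho^i)$ of $x=\lambda T_A(x)$ with $\lambda\in[0,1]$,
\[
\|x\|_X = \lambda\,\|T_A(x)\|_X \le C_0\,\|R(x)\|_{C^0(\partial\Omega^o)\times(C^0(\partial\Omega^i))^2},
\]
using $\lambda\le 1$. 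Now the two components of $R(x)$ supported on $\partial\Omega^i$ are precisely $\mathcal{N}_{F_j}(w_1,w_2) - (A_{j1},A_{j2})\cdot(w_1,w_2)$ for $j=1,2$, where $w_1 \equiv v^+_{\Omega^o}[\mu^o]_{|\partial\Omega^i} + V_{\partial\Omega^i}[\mu^i] + \rho^o$ and $w_2 \equiv V_{\partial\Omega^i}[\eta^i] + \rho^i$. This is exactly the combination controlled by \eqref{conditionF1F2}, which yields the pointwise bound $C_F(1+|w_1|+|w_2|)^\delta$ for each of them, while the first component of $R(x)$ is just $f^o$.

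Since the trace maps $\mu^o \mapsto v^+_{\Omega^o}[\mu^o]_{|\partial\Omega^i}$, $\mu^i \mapsto V_{\partial\Omega^i}[\mu^i]$, and $\eta^i \mapsto V_{\partial\Omega^i}[\eta^i]$ are continuous into $C^0(\partial\Omega^i)$, there is a constant $c>0$ with $\|w_1\|_{C^0(\partial\Omega^i)} + \|w_2\|_{C^0(\partial\Omega^i)} \le c\,\|x\|_X$. Combining the displayed estimate with \eqref{conditionF1F2} then produces an inequality of the form
\[
\|x\|_X \le a + b\,(1 + c\,\|x\|_X)^\delta
\]
for constants $a,b,c \in ]0,+\infty[$ independent of $\lambda$ and $x$. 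Because $\delta\in]0,1[$, the right-hand side grows sublinearly in $\|x\|_X$, so this inequality forces $\|x\|_X \le M$, where $M$ may be taken to be the largest root of $t = a + b(1+ct)^\delta$ (finite, since $t$ dominates $t^\delta$ as $t\to+\infty$). With this uniform bound in hand, Theorem \ref{Thm Leray Schauder} yields a fixed point of $T_A$ in $X$, which is the desired solution of \eqref{princintsys}.

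The point that requires care, and the only real obstacle, is this a priori estimate. The whole construction of $J_A$ and $T_A$ is arranged so that the \emph{linear} part $A(\cdot)$ of the boundary data is absorbed into the invertible operator $J_A$, leaving in the right-hand side only the sublinear remainder $(F_1,F_2)-A(\cdot)(\cdot)^T$ governed by \eqref{conditionF1F2}. It is precisely the sublinearity $\delta<1$, together with the boundedness of $J_A^{(-1)}$ from Proposition \ref{J_A} (ii) (which itself rests on the uniqueness Lemma \ref{Alemma}), that prevents $\|x\|_X$ from escaping to infinity; I would be careful to keep every constant independent of $\lambda$, invoking only $\lambda\le 1$, so that the bound $M$ is genuinely uniform as Theorem \ref{Thm Leray Schauder} demands.
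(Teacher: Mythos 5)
Your proposal is correct and follows essentially the same route as the paper's proof: Leray--Schauder applied to $T_A$ on $C^0(\partial\Omega^o)_0 \times C^0(\partial\Omega^i) \times C^0(\partial\Omega^i)_0 \times \R^2$, with compactness from Proposition \ref{Tcontcomp} and the a priori bound obtained by combining the boundedness of $J_A^{(-1)}$ (Proposition \ref{J_A} (ii)) with the sublinear growth condition \eqref{conditionF1F2}, leading to an inequality of the form $\|x\|_X \le C_1(C_2+\|x\|_X)^\delta$ that forces a uniform bound since $\delta<1$. The only cosmetic difference is that you spell out the elementary sublinearity argument explicitly, whereas the paper delegates it to a cited reference.
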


\begin{proof} \ 
	We plan to apply the Leray-Schauder Theorem \ref{Thm Leray Schauder} to the operator $T_A$ defined by \eqref{T} in the Banach space $C^0(\partial\Omega^o)_0 \times C^0(\partial\Omega^i) \times C^0(\partial\Omega^i)_0 \times \R^2$. By Proposition \ref{Tcontcomp} we already know that $T_A$ is a continuous operator from $C^0(\partial\Omega^o)_0 \times C^0(\partial\Omega^i) \times C^0(\partial\Omega^i)_0 \times \R^2$ to $C^0(\partial\Omega^o) \times (C^0(\partial\Omega^i))^2$ and maps bounded sets into sets with compact closure. So in order to apply the Leray-Schauder Theorem \ref{Thm Leray Schauder}, we are left to show that if $\lambda \in ]0,1[$ and if
	\begin{equation}\label{mu=lamdaT(mu)}
	(\mu^o,\mu^i,\eta^i,\rho^o,\rho^i) = \lambda T_A(\mu^o,\mu^i,\eta^i,\rho^o,\rho^i)
	\end{equation}
	with $(\mu^o,\mu^i,\eta^i,\rho^o,\rho^i) \in C^0(\partial\Omega^o)_0 \times C^0(\partial\Omega^i) \times C^0(\partial\Omega^i)_0 \times \R^2$,  then there exists a constant $C \in ]0,+\infty[$ (which does not depend on $\lambda$ and $(\mu^o,\mu^i,\eta^i,\rho^o,\rho^i)$), such that
	\begin{equation}\label{mu<C}
	\|(\mu^o,\mu^i,\eta^i,\rho^o,\rho^i)\|_{C^0(\partial\Omega^o) \times (C^0(\partial\Omega^i))^2\times \R^2} \leq C.
	\end{equation}
	By \eqref{mu=lamdaT(mu)} and by $|\lambda|<1$, we readily deduce that
	\begin{equation}\label{|mu|<|T(mu)|}
	\begin{split}
	&\|(\mu^o,\mu^i,\eta^i,\rho^o,\rho^i)\|_{C^0(\partial\Omega^o) \times (C^0(\partial\Omega^i))^2 \times \R^2} \\ &\qquad\leq \|T_A(\mu^o,\mu^i,\eta^i,\rho^o,\rho^i)\|_{C^0(\partial\Omega^o) \times (C^0(\partial\Omega^i))^2}\, .
	\end{split}
	\end{equation}	By the growth condition \eqref{conditionF1F2}, we can show that 
	\begin{equation}\label{inequalityNF1NF2}
	\begin{split}
	&\left\| \begin{pmatrix}
	\mathcal{N}_{F_1}(h^i_1,h^i_2)
	\\
	\mathcal{N}_{F_2}(h^i_1,h^i_2)
	\end{pmatrix} - A \begin{pmatrix}
	h^i_1 \\ h^i_2
	\end{pmatrix} \right\|_{(C^0(\partial\Omega^i))^2} \\
	& \qquad\leq C_F (1+\|h^i_1\|_{C^0(\partial\Omega^i)}+\|h^i_2\|_{C^0(\partial\Omega^i)})^\delta
	\end{split}
	\end{equation}
	for all pair of functions $(h^i_1,h^i_2) \in (C^0(\partial\Omega^i))^2$. Hence, by \eqref{|mu|<|T(mu)|} and by the definition of $T_A$ in \eqref{T}, we deduce that there exist two constants $C_1,C_2 \in ]0,+\infty[$, which depend only on the operator norm of $J_A^{(-1)}$ from $C^0(\partial\Omega^o) \times (C^0(\partial\Omega^i))^2$ to $C^0(\partial\Omega^o)_0 \times C^0(\partial\Omega^i) \times C^0(\partial\Omega^i)_0 \times \R^2$ (cf.~Theorem \ref{J_A} (ii)), on $\|f^o\|_{\partial\Omega^o}$, on the constant $C_F \in ]0,+\infty[$ provided by the growth
	condition \eqref{conditionF1F2} (cf. \eqref{inequalityNF1NF2}), on the norm of the bounded linear operator $v^+_{\Omega^o}[\cdot]_{|\partial\Omega^i}$ from $C^0(\partial\Omega^o)$ to $C^0(\partial\Omega^i)$, and on the norm of the bounded linear operator $V_{\partial\Omega^i}$ from $C^0(\partial\Omega^i)$ into itself, such that
	\[
	\begin{split}
	&\|(\mu^o,\mu^i,\eta^i,\rho^o,\rho^i)\|_{C^0(\partial\Omega^o) \times (C^0(\partial\Omega^i))^2 \times \R^2}\\ & \qquad \leq 
	C_1 (C_2 + \|(\mu^o,\mu^i,\eta^i,\rho^o,\rho^i)\|_{C^0(\partial\Omega^o) \times (C^0(\partial\Omega^i))^2 \times \R^2} )^\delta\, .
	\end{split}
	\]
	Then, by a straightforward calculation, we can show the existence of a constant $C>0$ such that inequality \eqref{mu<C} holds true (cf. Lanza de Cristoforis \cite[proof of Thm.~7.2]{La07}).	Hence, by the  Leray-Schauder Theorem \ref{Thm Leray Schauder} there exists at least one solution \[(\mu^o_0,\mu^i_0,\eta^i_0,\rho^o_0,\rho^i_0) \in C^0(\partial\Omega^o)_0 \times C^0(\partial\Omega^i) \times C^0(\partial\Omega^i)_0 \times \R^2\] of  $(\mu^o,\mu^i,\eta^i,\rho^o,\rho^i) = T_A(\mu^o,\mu^i,\eta^i,\rho^o,\rho^i)$.	Finally, by the definition of $T_A$ (cf. \eqref{T}), we conclude that $(\mu^o_0,\mu^i_0,\eta^i_0,\rho^o_0,\rho^i_0)$ is a solution in $C^0(\partial\Omega^o)_0 \times C^0(\partial\Omega^i) \times C^0(\partial\Omega^i)_0 \times \R^2$ of the nonlinear system \eqref{princintsys}.
\end{proof}

In what follows we will exploit a continuity condition on the superposition operators generated by $F_1$ and $F_2$, namely
\begin{equation}\label{conditionNF}
\begin{split}
\bullet & \, \mbox{The superposition operators } \mathcal{N}_{F_1} \mbox{ and }  \mathcal{N}_{F_2} \mbox{ are continuous  from } \\
& (C^{0,\alpha}(\partial\Omega^i))^2 \mbox{ into } C^{0,\alpha}(\partial\Omega^i).
\end{split}
\end{equation}
For conditions on $F_1$ and $F_2$ which imply the validity of assumption \eqref{conditionNF},
we refer to Appell and Zabrejko \cite[Ch.~8]{ApZa90} and to  Valent \cite[Chap. II]{Va88}. Then we can prove a regularity result for the fixed point provided by Proposition \ref{prop mu_0}, and, thus, an existence result for  problem \eqref{princeq}.

\begin{prop}\label{prop u^o_0,u^i_0}
	Let A be as in \eqref{Acondition}. Let assumptions \eqref{conditionF1F2} and \eqref{conditionNF} hold. Then the nonlinear system \eqref{princintsys} has at least one solution $(\mu^o_0,\mu^i_0,\eta^i_0,\rho^o_0,\rho^i_0) \in C^{0,\alpha}(\partial\Omega^o)_0 \times C^{0,\alpha}(\partial\Omega^i) \times C^{0,\alpha}(\partial\Omega^i)_0 \times \R^2$. In particular, problem \eqref{princeq} has at least one solution $(u^o_0,u^i_0) \in C^{1,\alpha}(\overline{\Omega^o} \setminus \Omega^i) \times C^{1,\alpha}(\overline{\Omega^i})$ given by
	\begin{equation}\label{u^o_0,u^i_0}
	(u^o_0,u^i_0) \equiv  (U^o_{\Omega^i}[\mu^o_0,\mu^i_0,\eta^i_0,\rho^o_0,\rho^i_0],U^i_{\Omega^i}[\mu^o_0,\mu^i_0,\eta^i_0,\rho^o_0,\rho^i_0])
	\end{equation}
	where the pair $(U^o_{\Omega^i}[\cdot,\cdot,\cdot,\cdot,\cdot],U^i_{\Omega^i}[\cdot,\cdot,\cdot,\cdot,\cdot])$ is defined by \eqref{U^o,U^i}.
\end{prop}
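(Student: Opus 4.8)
The plan is to start from the $C^0$ solution produced by Proposition \ref{prop mu_0} and then run a bootstrap argument to upgrade its regularity to $C^{0,\alpha}$, exploiting the continuity hypothesis \eqref{conditionNF} together with the $C^{0,\alpha}$ invertibility of $J_A$ from Proposition \ref{J_A}(iii). Once the densities are known to be H\"older continuous, the existence of a $C^{1,\alpha}$ solution of \eqref{princeq} will follow directly by combining Proposition \ref{propintsys} and Lemma \ref{rapprharm}.

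More precisely, first I would invoke Proposition \ref{prop mu_0} to obtain a quintuple $(\mu^o_0,\mu^i_0,\eta^i_0,\rho^o_0,\rho^i_0)$ in $C^0(\partial\Omega^o)_0 \times C^0(\partial\Omega^i) \times C^0(\partial\Omega^i)_0 \times \R^2$ solving \eqref{princintsys}; equivalently, $J_A[\mu^o_0,\mu^i_0,\eta^i_0,\rho^o_0,\rho^i_0]$ equals the right-hand side appearing inside $J_A^{(-1)}[\cdot]$ in \eqref{princintsys}, with $J_A$ understood in the $C^0$ setting of Proposition \ref{J_A}(ii). The key observation is that this right-hand side is in fact more regular than merely continuous. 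Writing $h^i_1 \equiv v^+_{\Omega^o}[\mu^o_0]_{|\partial\Omega^i} + V_{\partial\Omega^i}[\mu^i_0] + \rho^o_0$ and $h^i_2 \equiv V_{\partial\Omega^i}[\eta^i_0] + \rho^i_0$ for the arguments of the superposition operators, I would note that $v^+_{\Omega^o}[\cdot]_{|\partial\Omega^i}$ has a real analytic kernel (since $\partial\Omega^i$ and $\partial\Omega^o$ are disjoint) and hence maps $C^0(\partial\Omega^o)$ into $C^{0,\alpha}(\partial\Omega^i)$, while the smoothing properties of the single layer potential (cf.~Miranda \cite[Chap.~II, \S14]{Mi70}) give that $V_{\partial\Omega^i}$ maps $C^0(\partial\Omega^i)$ into $C^{0,\alpha}(\partial\Omega^i)$. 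Therefore $h^i_1, h^i_2 \in C^{0,\alpha}(\partial\Omega^i)$, and by the continuity assumption \eqref{conditionNF} the functions $\mathcal{N}_{F_1}(h^i_1,h^i_2)$ and $\mathcal{N}_{F_2}(h^i_1,h^i_2)$ belong to $C^{0,\alpha}(\partial\Omega^i)$. Since moreover $f^o \in C^{0,\alpha}(\partial\Omega^o)$, the entries $A_{jk}$ lie in $C^{0,\alpha}(\partial\Omega^i)$ by \eqref{Acondition}, and products of $C^{0,\alpha}$ functions are $C^{0,\alpha}$, the whole right-hand side belongs to $C^{0,\alpha}(\partial\Omega^o) \times (C^{0,\alpha}(\partial\Omega^i))^2$.

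At this point the decisive step is the bootstrap: we know $J_A[\mu^o_0,\mu^i_0,\eta^i_0,\rho^o_0,\rho^i_0] \in C^{0,\alpha}(\partial\Omega^o) \times (C^{0,\alpha}(\partial\Omega^i))^2$ for a quintuple that \emph{a priori} lies only in the $C^0$ spaces, and we must deduce that $(\mu^o_0,\mu^i_0,\eta^i_0) \in C^{0,\alpha}(\partial\Omega^o)_0 \times C^{0,\alpha}(\partial\Omega^i) \times C^{0,\alpha}(\partial\Omega^i)_0$. This is precisely the regularity argument already carried out after \eqref{J_A=0} in the proof of Proposition \ref{J_A}: unpacking the three components of $J_A$ via \eqref{J_A eq}, the terms with real analytic or weakly singular kernels are H\"older continuous, so that $(-\frac12 I + W^\ast_{\partial\Omega^o})[\mu^o_0]$ and the analogous combinations on $\partial\Omega^i$ are forced into $C^{0,\alpha}$, whence Theorem \ref{regularitytheorem} (and, for the coupled $(\mu^i_0,\eta^i_0)$ system, Theorem \ref{regularityvecttheorem}) yields the claimed H\"older regularity. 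Equivalently, one may simply observe that the $C^0$ inverse of Proposition \ref{J_A}(ii) and the $C^{0,\alpha}$ inverse of Proposition \ref{J_A}(iii) coincide on common H\"older-regular data, so that $(\mu^o_0,\mu^i_0,\eta^i_0,\rho^o_0,\rho^i_0) = J_A^{(-1)}[\text{RHS}]$ automatically lands in the $C^{0,\alpha}$ product space. Either way, the quintuple solves \eqref{princintsys} in the H\"older class.

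Finally, with the densities now in $C^{0,\alpha}$, I would apply Proposition \ref{propintsys}, which asserts the equivalence between solutions of \eqref{princintsys} in the H\"older class and solutions of \eqref{princeq} of the form \eqref{U^o,U^i}. This gives that $(u^o_0,u^i_0) \equiv (U^o_{\Omega^i}[\mu^o_0,\mu^i_0,\eta^i_0,\rho^o_0,\rho^i_0], U^i_{\Omega^i}[\mu^o_0,\mu^i_0,\eta^i_0,\rho^o_0,\rho^i_0])$ solves \eqref{princeq}, while Lemma \ref{rapprharm} guarantees that this pair belongs to $C^{1,\alpha}_{\mathrm{h}}(\overline{\Omega^o}\setminus\Omega^i) \times C^{1,\alpha}_{\mathrm{h}}(\overline{\Omega^i})$, hence in particular to $C^{1,\alpha}(\overline{\Omega^o}\setminus\Omega^i) \times C^{1,\alpha}(\overline{\Omega^i})$. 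The only genuinely delicate point is the bootstrap in the third paragraph; everything else is bookkeeping that assembles results already established. I expect the subtlety there to lie purely in confirming that the H\"older smoothing of the layer potential traces feeds correctly into hypothesis \eqref{conditionNF}, which is exactly why that continuity assumption, absent from the $C^0$ existence of Proposition \ref{prop mu_0}, is needed here.
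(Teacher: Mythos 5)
Your proposal is correct and follows essentially the same route as the paper: obtain the $C^0$ fixed point from Proposition \ref{prop mu_0}, show the right-hand side of \eqref{princintsys} is actually in $C^{0,\alpha}(\partial\Omega^o)\times (C^{0,\alpha}(\partial\Omega^i))^2$ via the smoothing of $v^+_{\Omega^o}[\cdot]_{|\partial\Omega^i}$ and $V_{\partial\Omega^i}$ together with \eqref{conditionNF}, upgrade the densities using the $C^{0,\alpha}$ invertibility of $J_A$ (your alternative ``the two inverses coincide'' argument is precisely the paper's step, resting on Proposition \ref{J_A}(ii)--(iii)), and conclude with Proposition \ref{propintsys} and Lemma \ref{rapprharm}. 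The only cosmetic difference is that you foreground the component-wise bootstrap via Theorems \ref{regularitytheorem}--\ref{regularityvecttheorem}, which the paper has already absorbed into the proof of Proposition \ref{J_A}(iii).
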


\begin{proof} \ 
	Let $T_A$ be as in \eqref{T}. By Proposition \ref{prop mu_0},  we deduce the existence of a quintuple $(\mu^o_0,\mu^i_0,\eta^i_0,\rho^o_0,\rho^i_0) \in C^0(\partial\Omega^o)_0 \times C^0(\partial\Omega^i) \times C^0(\partial\Omega^i)_0 \times \R^2$ such that $
	(\mu^o_0,\mu^i_0,\eta^i_0,\rho^o_0,\rho^i_0) = T_A(\mu^o_0,\mu^i_0,\eta^i_0,\rho^o_0,\rho^i_0).
$ By the mapping properties of integral operators with real analytic kernel and no singularities (cf. Lanza de Cristoforis and Musolino \cite[Prop. 4.1]{LaMu13}), $v^+_{\Omega^o}[\mu^o_0]_{|\partial\Omega^i}$ belongs to $C^{0,\alpha}(\partial\Omega^i)$. By classical results in potential theory (cf.~Miranda \cite[Chap.~II, \S14, III]{Mi70}), $V_{\Omega^i}[\mu^i_0]$ and $V_{\Omega^i}[\eta^i_0]$ belong to $C^{0,\alpha}(\partial\Omega^i)$. Then, by condition \eqref{conditionNF} and by the membership of $A \in M_2(C^{0,\alpha} (\partial\Omega^i))$ and of $f^o \in C^{0,\alpha} (\partial\Omega^o)$, we obtain that
	\begin{equation*}
	\begin{split}
	&\begin{pmatrix}
	f^o
	\\
	\mathcal{N}_{F_1}(v^+_{\Omega^o}[\mu^o_0]_{|\partial\Omega^i} + V_{\partial\Omega^i}[\mu^i_0] +\rho^o_0 ,V_{\partial\Omega^i}[\eta^i_0] +\rho^i_0)
	\\
	\mathcal{N}_{F_2}(v^+_{\Omega^o}[\mu^o_0]_{|\partial\Omega^i} + V_{\partial\Omega^i}[\mu^i_0] +\rho^o_0 ,V_{\partial\Omega^i}[\eta^i_0] +\rho^i_0)
	\end{pmatrix}
	\\ &\qquad - \begin{pmatrix}
	0 & 0 & 0 \\
	0 & A_{11} & A_{12} \\
	0 & A_{21} & A_{22} 
	\end{pmatrix} 
	\begin{pmatrix}
	0
	\\
	v^+_{\Omega^o}[\mu^o_0]_{|\partial\Omega^i} + V_{\partial\Omega^i}[\mu^i_0] +\rho^o_0
	\\
	V_{\partial\Omega^i}[\eta^i_0] +\rho^i_0
	\end{pmatrix}
	\end{split}
	\end{equation*}
	belongs to the product space $C^{0,\alpha}(\partial\Omega^o) \times (C^{0,\alpha}(\partial\Omega^i))^2$. Finally, by the invertibility of the operator $J_A$ from $C^{0,\alpha}(\partial\Omega^o)_0 \times C^{0,\alpha}(\partial\Omega^i) \times C^{0,\alpha}(\partial\Omega^i)_0 \times \R^2$ to $C^{0,\alpha}(\partial\Omega^o) \times (C^{0,\alpha}(\partial\Omega^i))^2$, we obtain that $(\mu^o_0,\mu^i_0,\eta^i_0,\rho^o_0,\rho^i_0) \in C^{0,\alpha}(\partial\Omega^o)_0 \times C^{0,\alpha}(\partial\Omega^i) \times C^{0,\alpha}(\partial\Omega^i)_0 \times \R^2$.	In particular, by Proposition \ref{propintsys} we deduce that the pair given by \eqref{u^o_0,u^i_0} is a solution of   \eqref{princeq} (cf.~\eqref{T}).
\end{proof}

\section{The perturbed transmission problem (\ref{princeqpertu})}\label{sec princeqpertu}

This section is devoted to the study of the perturbed transmission problem \eqref{princeqpertu}. We introduce the map $M=(M_1,M_2,M_3)$ from $\mathcal{A}^{\Omega^o}_{\partial\Omega^i} \times C^{0,\alpha}(\partial\Omega^o)_0 \times C^{0,\alpha}(\partial\Omega^i) \times C^{0,\alpha}(\partial\Omega^i)_0 \times \R^2$ to $C^{0,\alpha}(\partial\Omega^o) \times (C^{0,\alpha}(\partial\Omega^i))^2$ defined by
\begin{equation}\label{M}
\begin{aligned}
& M_1[\phi,\mu^o,\mu^i,\eta^i,\rho^o,\rho^i](x) \\ & \equiv \left( -\frac{1}{2} I + W^\ast_{\partial\Omega^o} \right) [\mu^o] (x) + \nu_{\Omega^o}(x) \cdot \nabla  v^-_{\Omega^i[\phi]}[\mu^i \circ \phi^{(-1)}] (x) - f^o(x) \quad \forall x \in \partial\Omega^o
\\
& M_2[\phi,\mu^o,\mu^i,\eta^i,\rho^o,\rho^i] (t) \\&\equiv
\left( \frac{1}{2} I + W^\ast_{\partial\Omega^i[\phi]} \right) [\mu^i \circ \phi^{(-1)}] (\phi(t)) + \nu_{\Omega^i[\phi]}(\phi(t)) \cdot \nabla  v^+_{\Omega^o}[\mu^o](\phi(t))
\\
& \qquad \qquad
- F_1\bigg(t,v^+_{\Omega^o}[\mu^o](\phi(t)) + V_{\partial\Omega^i[\phi]}[\mu^i\circ \phi^{(-1)}](\phi(t)) +\rho^o , \\
& \qquad \qquad \qquad \qquad V_{\partial\Omega^i[\phi]}[\eta^i\circ \phi^{(-1)}](\phi(t)) +\rho^i \bigg)
 \quad \forall t \in \partial\Omega^i
\\
& M_3[\phi,\mu^o,\mu^i,\eta^i,\rho^o,\rho^i] (t) \\ &\equiv
\left( -\frac{1}{2} I + W^\ast_{\partial\Omega^i[\phi]} \right) [\eta^i \circ \phi^{(-1)}] (\phi(t))
\\
& \qquad \qquad
- F_2\bigg(t,v^+_{\Omega^o}[\mu^o](\phi(t)) + V_{\partial\Omega^i[\phi]}[\mu^i\circ \phi^{(-1)}](\phi(t)) +\rho^o ,\\
& \qquad \qquad \qquad \qquad  V_{\partial\Omega^i[\phi]}[\eta^i\circ \phi^{(-1)}](\phi(t)) +\rho^i \bigg ) 
 \quad\forall t \in \partial\Omega^i
\end{aligned}
\end{equation}
for all $(\phi,\mu^o,\mu^i,\eta^i,\rho^o,\rho^i) \in \mathcal{A}^{\Omega^o}_{\partial\Omega^i} \times C^{0,\alpha}(\partial\Omega^o)_0 \times C^{0,\alpha}(\partial\Omega^i) \times C^{0,\alpha}(\partial\Omega^i)_0 \times \R^2$. We incidentally observe that by the definition of $\Omega^i[\phi]$ we have that $\partial \Omega^i[\phi]=\phi(\partial\Omega^i)$. 
Then, by the definition of $M$, we can deduce the following result.

\begin{prop}\label{M=0prop}
	Let A be as in \eqref{Acondition}. Let assumptions \eqref{conditionF1F2} and \eqref{conditionNF} hold. Let
	\begin{equation*}
	(\phi,\mu^o,\mu^i,\eta^i,\rho^o,\rho^i) \in \mathcal{A}^{\Omega^o}_{\partial\Omega^i} \times C^{0,\alpha}(\partial\Omega^o)_0 \times C^{0,\alpha}(\partial\Omega^i) \times C^{0,\alpha}(\partial\Omega^i)_0 \times \R^2.
	\end{equation*}
	Then the pair of functions
	\begin{equation*}
	(U^o_{\Omega^i[\phi]}[\mu^o,\mu^i\circ \phi^{(-1)},\eta^i\circ \phi^{(-1)},\rho^o,\rho^i],U^i_{\Omega^i[\phi]}[\mu^o,\mu^i\circ \phi^{(-1)},\eta^i\circ \phi^{(-1)},\rho^o,\rho^i])
	\end{equation*}
	defined by \eqref{U^o,U^i} is a solution of problem \eqref{princeqpertu} if and only if 
	\begin{equation}\label{M=0}
	M[\phi,\mu^o,\mu^i,\eta^i,\rho^o,\rho^i] = (0,0,0).
	\end{equation}
	In particular, equation
	\begin{equation}\label{M_0=0}
	M[\phi_0,\mu^o,\mu^i,\eta^i,\rho^o,\rho^i] = (0,0,0)
	\end{equation}
	is equivalent to the system \eqref{princintsys} and has a solution $(\mu^o_0,\mu^i_0,\eta^i_0,\rho^o_0,\rho^i_0) \in  C^{0,\alpha}(\partial\Omega^o)_0 \times C^{0,\alpha}(\partial\Omega^i) \times C^{0,\alpha}(\partial\Omega^i)_0 \times \R^2$  (recall that $\phi_0\equiv \text{id}_{\partial\Omega^i}$).
\end{prop}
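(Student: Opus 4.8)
The plan is to mimic the argument of Proposition \ref{propintsys}, transposed to the perturbed inclusion $\Omega^i[\phi]$. First I would note that, for $\phi \in \mathcal{A}^{\Omega^o}_{\partial\Omega^i}$, the set $\Omega^i[\phi]$ meets the standing geometric hypotheses: it is bounded, open, connected and of class $C^{1,\alpha}$ (because $\phi$ is a $C^{1,\alpha}$ diffeomorphism with injective differential, so $\partial\Omega^i[\phi]=\phi(\partial\Omega^i)$ is a $C^{1,\alpha}$ hypersurface), its closure lies in $\Omega^o$ by definition of $\mathcal{A}^{\Omega^o}_{\partial\Omega^i}$, and its exterior is connected by the Jordan--Leray theorem. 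Moreover $\phi^{(-1)}$ is $C^{1,\alpha}$, so $\mu^i\circ\phi^{(-1)}$ and $\eta^i\circ\phi^{(-1)}$ belong to $C^{0,\alpha}(\phi(\partial\Omega^i))$ and formula \eqref{U^o,U^i} with $\Omega=\Omega^i[\phi]$ is well posed. By Theorem \ref{sdp} (i)--(ii) the resulting pair is harmonic in $\Omega^o\setminus\overline{\Omega^i[\phi]}$ and in $\Omega^i[\phi]$, and of class $C^{1,\alpha}$ up to the boundary; hence the first two lines of \eqref{princeqpertu} hold automatically, and the pair solves \eqref{princeqpertu} exactly when the three boundary conditions hold.

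Next I would translate those three boundary conditions into the vanishing of $M_1,M_2,M_3$. On $\partial\Omega^o$ the term $v^-_{\Omega^i[\phi]}[\mu^i\circ\phi^{(-1)}]$ is harmonic in a neighbourhood of $\partial\Omega^o$, so the jump relation of Theorem \ref{sdp} (iii) applied to the interior trace of $v^+_{\Omega^o}[\mu^o]$ gives $\nu_{\Omega^o}\cdot\nabla U^o = (-\tfrac12 I + W^\ast_{\partial\Omega^o})[\mu^o] + \nu_{\Omega^o}\cdot\nabla v^-_{\Omega^i[\phi]}[\mu^i\circ\phi^{(-1)}]$, so the third equation of \eqref{princeqpertu} reads $M_1[\phi,\dots]=0$. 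On the interface $\phi(\partial\Omega^i)$ the outer field $U^o$ is approached from the exterior of $\Omega^i[\phi]$, whence $\nu_{\Omega^i[\phi]}\cdot\nabla U^o = (\tfrac12 I + W^\ast_{\partial\Omega^i[\phi]})[\mu^i\circ\phi^{(-1)}] + \nu_{\Omega^i[\phi]}\cdot\nabla v^+_{\Omega^o}[\mu^o]$, while by continuity of the single layer potential the traces of $U^o$ and $U^i$ reduce to $v^+_{\Omega^o}[\mu^o]+V_{\partial\Omega^i[\phi]}[\mu^i\circ\phi^{(-1)}]+\rho^o$ and $V_{\partial\Omega^i[\phi]}[\eta^i\circ\phi^{(-1)}]+\rho^i$; the inner field $U^i$ is instead approached from inside $\Omega^i[\phi]$, yielding $\nu_{\Omega^i[\phi]}\cdot\nabla U^i = (-\tfrac12 I + W^\ast_{\partial\Omega^i[\phi]})[\eta^i\circ\phi^{(-1)}]$. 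Composing these identities with $\phi$, so that $\phi^{(-1)}(\phi(t))=t$ turns $F_j(\phi^{(-1)}(\cdot),\dots)$ into $F_j(t,\dots)$, shows that the fourth and fifth equations of \eqref{princeqpertu} are precisely $M_2[\phi,\dots]=0$ and $M_3[\phi,\dots]=0$. This establishes the equivalence in both directions.

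For the final claim I would specialise to $\phi=\phi_0=\mathrm{id}_{\partial\Omega^i}$, so that $\Omega^i[\phi_0]=\Omega^i$ and every composition with $\phi_0^{(-1)}$ and every evaluation at $\phi_0(t)=t$ is trivial; then the definition \eqref{M} collapses to the left-hand sides of the raw integral system \eqref{intsys} with $F_1,F_2$ written through $\mathcal{N}_{F_1},\mathcal{N}_{F_2}$. By Proposition \ref{propintsys} the system \eqref{intsys} is equivalent, after subtracting the $A$-term and inverting $J_A$, to \eqref{princintsys}; hence $M[\phi_0,\dots]=(0,0,0)$ is equivalent to \eqref{princintsys}. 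The existence of a solution in $C^{0,\alpha}(\partial\Omega^o)_0 \times C^{0,\alpha}(\partial\Omega^i) \times C^{0,\alpha}(\partial\Omega^i)_0 \times \R^2$ is then exactly the content of Proposition \ref{prop u^o_0,u^i_0}.

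I expect the only genuinely delicate point to be the bookkeeping of the jump relations on the moving interface $\phi(\partial\Omega^i)$ --- in particular deciding which one-sided limit (interior versus exterior of $\Omega^i[\phi]$) produces the $+\tfrac12$ and which the $-\tfrac12$ --- together with the verification that $\mu^i\circ\phi^{(-1)}$ and $\eta^i\circ\phi^{(-1)}$ lie in $C^{0,\alpha}(\phi(\partial\Omega^i))$ and that pullback by $\phi$ preserves the $C^{0,\alpha}$ structure, for which the regularity of $\phi$ and $\phi^{(-1)}$ and Lemma \ref{lemmanotation} are the relevant tools. Everything else is a direct transcription of the unperturbed computation.
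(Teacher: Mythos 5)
Your proposal is correct and follows essentially the same route as the paper: apply the representation formula of Lemma \ref{rapprharm} with $\Omega=\Omega^i[\phi]$, translate the boundary conditions of \eqref{princeqpertu} into $M[\phi,\mu^o,\mu^i,\eta^i,\rho^o,\rho^i]=(0,0,0)$ via the jump relations of Theorem \ref{sdp} (iii) and the change of variables $x=\phi(t)$, and then specialise to $\phi_0$ to recover the equivalence with \eqref{princintsys}. Your citation of Proposition \ref{prop u^o_0,u^i_0} for the existence of a solution in the $C^{0,\alpha}$ setting is in fact the more precise reference (the paper's proof invokes Proposition \ref{prop mu_0}, which by itself only yields $C^0$ regularity; the H\"older regularity asserted in the statement is exactly what Proposition \ref{prop u^o_0,u^i_0} provides under assumption \eqref{conditionNF}).
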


\begin{proof} \ 
	We first observe that, by the regularity of $\phi \in \mathcal{A}^{\Omega^o}_{\partial\Omega^i}$, if $(\mu^o,\mu^i,\eta^i,\rho^o,\rho^i) \in  C^{0,\alpha}(\partial\Omega^o)_0 \times C^{0,\alpha}(\partial\Omega^i) \times C^{0,\alpha}(\partial\Omega^i)_0 \times \R^2$, then
	\begin{equation*}
	(\mu^o,\mu^i,\eta^i,\rho^o,\rho^i) \in C^{0,\alpha}(\partial\Omega^o)_0 \times C^{0,\alpha}(\partial\Omega^i) \times C^{0,\alpha}(\partial\Omega^i)_0 \times \R^2.
	\end{equation*}
	Moreover, since $\overline{\Omega^i[\phi]} \subset \Omega^o$, we can apply Lemma \ref{rapprharm} with $\Omega=\Omega^i[\phi]$. Then by the jump relations for the single layer potential (cf.~Theorem \ref{sdp} (iii)), by a change of variable on $\phi(\partial \Omega^i)$ and by the definition of $M$ (cf.~\eqref{M}), we obtain that the pair of functions
	\begin{align*}
	& U^o_{\Omega^i[\phi]}[\mu^o,\mu^i\circ \phi^{(-1)},\eta^i\circ \phi^{(-1)},\rho^o,\rho^i] = (v^+_{\Omega^o} [\mu^o] + v^-_{\Omega^i[\phi]}[\mu^i\circ \phi^{(-1)}] + \rho^o)_{| \overline{\Omega^o} \setminus \Omega^i[\phi]},
	\\
	& U^i_{\Omega^i[\phi]}[\mu^o,\mu^i\circ \phi^{(-1)},\eta^i\circ \phi^{(-1)},\rho^o,\rho^i] = v^+_{\Omega^i[\phi]}[\eta^i\circ \phi^{(-1)}] + \rho^i
	\end{align*}
	is a solution of problem \eqref{princeqpertu} if and only if \eqref{M=0} is satisfied.  Finally,  since $\phi_0\equiv \text{id}_{\partial\Omega^i}$ (cf. \eqref{phi0}) and by the definition of $J_A$ (cf.~\eqref{J_A eq}), we obtain that, for all $(\mu^o,\mu^i,\eta^i,\rho^o,\rho^i) \in  C^{0,\alpha}(\partial\Omega^o)_0 \times C^{0,\alpha}(\partial\Omega^i) \times C^{0,\alpha}(\partial\Omega^i)_0 \times \R^2$,
	equation \eqref{M_0=0} is equivalent to the system \eqref{princintsys}. Then the existence of a solution $(\mu^o_0,\mu^i_0,\eta^i_0,\rho^o_0,\rho^i_0)$ of \eqref{M_0=0} follows by Proposition \ref{prop mu_0}.
\end{proof}

By Proposition \ref{M=0prop}, the study of problem \eqref{princeqpertu} is reduced to that of equation \eqref{M=0}. 
We now wish to apply the Implicit Function Theorem for real analytic maps in Banach spaces (cf. Deimling \cite[Thm. 15.3]{De85}) to equation \eqref{M=0} around the value $\phi_0$. As a first step we have to analyse the regularity of the map $M$.

In what follows we will assume the following:
\begin{equation}\label{conditionNF*}
\begin{split}
\bullet & \,\mbox{The superposition operators } \mathcal{N}_{F_1} \mbox{ and }  \mathcal{N}_{F_2} \mbox{ are real analytic from } \\
& (C^{0,\alpha}(\partial\Omega^i))^2 \mbox{ into } C^{0,\alpha}(\partial\Omega^i).
\end{split}
\end{equation}

For conditions on $F_1$ and $F_2$ which imply the validity of assumption \eqref{conditionNF*},
we refer to Valent \cite[Chap. II]{Va88}. We now show that $M$ is real analytic.

\begin{prop}\label{Mrealanal}
	Let assumption \eqref{conditionNF*} holds. Then the map $M$ is real analytic from $\mathcal{A}^{\Omega^o}_{\partial\Omega^i} \times C^{0,\alpha}(\partial\Omega^o)_0 \times C^{0,\alpha}(\partial\Omega^i) \times C^{0,\alpha}(\partial\Omega^i)_0 \times \R^2$ to $C^{0,\alpha}(\partial\Omega^o) \times (C^{0,\alpha}(\partial\Omega^i))^2$.
\end{prop}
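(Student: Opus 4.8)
The plan is to prove real analyticity of $M = (M_1, M_2, M_3)$ by decomposing each component into a composition and sum of maps that are already known to be real analytic, and then to invoke the fact that compositions, sums, and products of real analytic maps between (open subsets of) Banach spaces are again real analytic. The crucial observation is that the explicit formulas \eqref{M} for $M_1$, $M_2$, $M_3$ are built from only a handful of elementary ingredients: the $\phi$-independent operators $\left(\mp\frac{1}{2}I + W^\ast_{\partial\Omega^o}\right)$ and the layer potentials on the \emph{fixed} outer boundary $\partial\Omega^o$; the $\phi$-dependent pulled-back layer potentials on the moving boundary $\phi(\partial\Omega^i)$; the pullback operation $\mu^i \mapsto \mu^i \circ \phi^{(-1)}$ (and its inverse, the pushforward via $\phi$); and the superposition operators $\mathcal{N}_{F_1}$, $\mathcal{N}_{F_2}$.

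First I would isolate the terms that do not involve $\phi$ at all (for instance $\left(-\frac{1}{2}I + W^\ast_{\partial\Omega^o}\right)[\mu^o]$ and $f^o$ in $M_1$): these are linear continuous, hence trivially real analytic, in the variable $\mu^o$. Next I would treat the genuinely $\phi$-dependent layer-potential terms. These are precisely the quantities whose real analytic dependence upon the diffeomorphism $\phi$ parametrising the support is the content of the standard analyticity results for the Functional Analytic Approach (the results of Lanza de Cristoforis and collaborators on layer potentials depending analytically on the support, together with Lemma \ref{lemmanotation} for the change of variable and the normal vector field $\nu_{\Omega^i[\phi]}(\phi(\cdot))$). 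Concretely, I would argue that the maps
\[
(\phi,\mu^o) \mapsto v^+_{\Omega^o}[\mu^o](\phi(\cdot)), \qquad
(\phi,\mu^o) \mapsto \nu_{\Omega^i[\phi]}(\phi(\cdot)) \cdot \nabla v^+_{\Omega^o}[\mu^o](\phi(\cdot)),
\]
and the analogous maps
\[
(\phi,\mu^i) \mapsto \left(\tfrac{1}{2}I + W^\ast_{\partial\Omega^i[\phi]}\right)[\mu^i \circ \phi^{(-1)}](\phi(\cdot)), \qquad
(\phi,\mu^i) \mapsto V_{\partial\Omega^i[\phi]}[\mu^i \circ \phi^{(-1)}](\phi(\cdot)),
\]
together with the corresponding term $(\phi,\mu^i) \mapsto \nu_{\Omega^o}(\cdot) \cdot \nabla v^-_{\Omega^i[\phi]}[\mu^i \circ \phi^{(-1)}]$ appearing in $M_1$, are all real analytic from the relevant open subsets of the product Banach space into $C^{0,\alpha}$ of the appropriate boundary. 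Here the composition with $\phi$ and with $\phi^{(-1)}$ effectively transports everything back to the fixed boundary $\partial\Omega^i$, which is what makes the target space $\phi$-independent and allows the analyticity theorems to apply. The real analyticity of the pullback $\mu^i \mapsto \mu^i \circ \phi^{(-1)}$ in the pair $(\phi,\mu^i)$ is itself a known ingredient of the approach.

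Having established analyticity of the interior arguments of $F_1$ and $F_2$, the final step combines them with the superposition operators. By assumption \eqref{conditionNF*}, $\mathcal{N}_{F_1}$ and $\mathcal{N}_{F_2}$ are real analytic from $(C^{0,\alpha}(\partial\Omega^i))^2$ into $C^{0,\alpha}(\partial\Omega^i)$; composing them with the real analytic maps producing their two $C^{0,\alpha}(\partial\Omega^i)$-valued arguments preserves real analyticity. Summing the resulting analytic terms and adjoining the harmless constants $\rho^o$, $\rho^i$ (which enter linearly) then yields the real analyticity of $M_2$ and $M_3$; $M_1$ is handled identically but more simply, since it contains no superposition operator. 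I expect the main obstacle to be bookkeeping rather than conceptual: one must carefully verify that each $\phi$-dependent layer-potential term, after the pullbacks, genuinely lands in the \emph{fixed} space $C^{0,\alpha}(\partial\Omega^i)$ (resp.\ $C^{0,\alpha}(\partial\Omega^o)$) and that the hypotheses of the cited analyticity theorems are met on an open neighbourhood of $\phi_0$ in $\mathcal{A}^{\Omega^o}_{\partial\Omega^i}$ — in particular that the support $\phi(\partial\Omega^i)$ stays compactly inside $\Omega^o$, so that the outer-potential evaluations at $\phi(t)$ avoid the singularity of $S_n$. Once these domain and regularity checks are in place, the analyticity of $M$ is a routine consequence of the closure of real analytic maps under composition, sum, and product.
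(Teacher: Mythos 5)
Your proposal is correct and follows essentially the same route as the paper: it decomposes $M_1$, $M_2$, $M_3$ into the same building blocks (fixed-support terms handled by linearity or by the Lanza de Cristoforis--Musolino result on integral operators with real analytic kernels and no singularities, moving-support layer potentials pulled back to $\partial\Omega^i$ handled by the Lanza de Cristoforis--Rossi analyticity theorem together with Lemma \ref{lemmanotation}), and then concludes by composing with $\mathcal{N}_{F_1}$, $\mathcal{N}_{F_2}$ via assumption \eqref{conditionNF*} and the closure of real analytic maps under composition and sum. The only cosmetic difference is that the paper never invokes analyticity of the pullback $(\phi,\mu^i)\mapsto\mu^i\circ\phi^{(-1)}$ as a separate ingredient (which would be ill-posed as a map into a fixed Banach space); it treats each pulled-back potential, evaluated at $\phi(\cdot)$, as a single composite map into $C^{0,\alpha}(\partial\Omega^i)$, exactly as you in fact do when you note that the pullbacks make the target space $\phi$-independent.
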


\begin{proof} \ 
	 We only prove the analyticity of $M_2$. The analyticity of $M_1$ and of $M_3$ can be proved similarly and it is left to the reader.
	 Therefore, we now analyse $M_2$. The map from $\mathcal{A}^{\Omega^o}_{\partial\Omega^i} \times C^{0,\alpha}(\partial\Omega^i)$ to $C^{0,\alpha}(\partial\Omega^i)$  that  takes $(\phi,\mu^i)$ to the function of the variable $t\in\partial\Omega^i$ defined by
    \[
	\begin{split}
	&\left( \frac{1}{2} I + W^\ast_{\partial\Omega^i[\phi]} \right) [\mu^i \circ \phi^{(-1)}] (\phi(t)) = \frac{1}{2}\mu^i(t)  + W^\ast_{\partial\Omega^i[\phi]} [ \mu^i\circ \phi^{(-1)}] (\phi(t)) 
	\\
	&= \frac{1}{2}\mu^i(t) + \int_{\partial\Omega^i} (\nu_{\Omega^i[\phi]}(\phi(t))) \cdot \nabla S_n(\phi(t)-\phi(s))) \,\mu^i(s)\, \tilde{\sigma}_n[\phi](s) \,d\sigma_s
	\end{split}
    \]
	is real analytic by the real analyticity result for the dependence of layer potentials upon perturbation of the support and of the density of Lanza de Cristoforis and Rossi \cite[Thm.~3.12]{LaRo04} and Lanza de Cristoforis \cite[Prop.~7]{La07-2} (see also Lemma \ref{lemmanotation}). 
	The map from $\mathcal{A}^{\Omega^o}_{\partial\Omega^i} \times C^{0,\alpha}(\partial\Omega^o)$ to $C^{0,\alpha}(\partial\Omega^i)$  that  takes $(\phi,\mu^o)$ to the function of the variable $t \in\partial\Omega^i$ defined by
    \[
	\nu_{\Omega^i[\phi]}(\phi(t)) \cdot \nabla  v^+_{\Omega^o}[\mu^o](\phi(t))
	= \int_{\partial\Omega^o} (\nu_{\Omega^i[\phi]}(\phi(t)) \cdot \nabla  S_n(\phi(t)-y))  \, \mu^o(y) \,d\sigma_y 
    \]
	can be proven to be
	real analytic  by 
	 the properties of integral operators with real analytic kernels and no singularities (see
	Lanza de Cristoforis and Musolino \cite[Prop. 4.1]{LaMu13}).
	For the third term of $M_2$ we proceed in this way. The map from $\mathcal{A}^{\Omega^o}_{\partial\Omega^i} \times C^{0,\alpha}(\partial\Omega^o)$ to $C^{0,\alpha}(\partial\Omega^i)$  that  takes $(\phi,\mu^o)$ to the function of the variable $t\in\partial\Omega^i$ defined by
    \[
	v^+_{\Omega^o}[\mu^o](\phi(t)) =  \int_{\partial\Omega^o}  S_n(\phi(t)-y)  \, \mu^o(y) \,d\sigma_y 
    \]
	can be proven to be real analytic  by 
	  the properties of integral operators with real analytic kernels and no singularities (see
	Lanza de Cristoforis and Musolino \cite[Prop. 4.1]{LaMu13}). The map from $\mathcal{A}^{\Omega^o}_{\partial\Omega^i} \times C^{0,\alpha}(\partial\Omega^i)$ to $C^{0,\alpha}(\partial\Omega^i)$  that  takes $(\phi,\mu^o)$ to the function of the variable $t\in\partial\Omega^i$ defined by
    \[
	V_{\partial\Omega^i[\phi]}[\mu^i\circ \phi^{(-1)}](\phi(t)) = \int_{\phi(\partial\Omega^i)}  S_n(\phi(t)-y)  \, \mu^i\circ \phi^{(-1)}(y) \,d\sigma_y 
    \]
	is real analytic by a result of real analytic dependence for the single layer potential upon perturbation of the support and of the density (see Lanza de Cristoforis and Rossi \cite[Thm.~3.12]{LaRo04}, Lanza de Cristoforis \cite[Prop.~7]{La07-2}). Similarly we can  treat $V_{\partial\Omega^i[\phi]}[\eta^i\circ \phi^{(-1)}](\phi(\cdot))$. Hence, by the real analyticity of the composition of real analytic maps and  by   \eqref{conditionNF*}, we conclude that the map from $\mathcal{A}^{\Omega^o}_{\partial\Omega^i} \times C^{0,\alpha}(\partial\Omega^o)_0 \times C^{0,\alpha}(\partial\Omega^i) \times C^{0,\alpha}(\partial\Omega^i)_0 \times \R^2$ to $C^{0,\alpha}(\partial\Omega^i)$  that  takes a sextuple $(\phi,\mu^o,\mu^i,\eta^i,\rho^o,\rho^i)$ to the function
	\[
	\begin{split}
	\mathcal{N}_{F_1}\bigg( v^+_{\Omega^o}[\mu^o](\phi(\cdot))_{|\partial\Omega^i} + V_{\partial\Omega^i[\phi]}[\mu^i\circ \phi^{(-1)}]&(\phi(\cdot)) +\rho^o ,\\
	& V_{\partial\Omega^i[\phi]}[\eta^i\circ \phi^{(-1)}](\phi(\cdot)) +\rho^i   \bigg)
	\end{split}
	\]
	is real analytic. As a consequence $M_2$ is real analytic. 
\end{proof}

It will be convenient to consider $F_1$, $F_2$ as two components of a vector field on $\partial\Omega^i\times \R^2$. We denote by  $F$ the function from  $\partial\Omega^i\times \R^2$ to $\R^2$ defined by
\begin{equation*}\label{defF}
F(t,\zeta_1,\zeta_2) = (F_1(t,\zeta_1,\zeta_2),F_2(t,\zeta_1,\zeta_2)) \quad \forall (t,\zeta_1,\zeta_2) \in \partial\Omega^i\times\R^2\, .
\end{equation*} 
Clearly, we can extend the definition of the superposition operator (cf. Section \ref{notation}) in a natural way, {i.e.}, by setting
\begin{equation*}\label{defNF}
\mathcal{N}_F : (C^{0,\alpha}(\partial\Omega^i))^2 \to (C^{0,\alpha}(\partial\Omega^i))^2, \, \mathcal{N}_F \equiv (\mathcal{N}_{F_1},\mathcal{N}_{F_2})\, .
\end{equation*}
Now let $(\mu^o_0,\mu^i_0,\eta^i_0,\rho^o_0,\rho^i_0) \in C^{0,\alpha}(\partial\Omega^o)_0 \times C^{0,\alpha}(\partial\Omega^i) \times C^{0,\alpha}(\partial\Omega^i)_0 \times \R^2$ be as in Proposition \ref{prop u^o_0,u^i_0}. By standard calculus in Banach space, we have the following formula regarding the first order differential of $\mathcal{N}_F$: 
\[
d\mathcal{N}_F(v^+_{\Omega^o}[\mu^o_0]_{|\partial\Omega^i} +  V_{\Omega^i}[\mu^i_0] + \rho^o_0 , V_{\Omega^i}[\eta^i_0] +\rho^i_0) .(h_1,h_2) = A_{\mathcal{N}_F,0}  \begin{pmatrix}
h_1
\\
h_2
\end{pmatrix}
\]
for all $(h_1,h_2) \in (C^{0,\alpha}(\partial\Omega^i))^2$, where 
\begin{equation}\label{A_{N_F}}
A_{\mathcal{N}_F,0} \equiv \begin{pmatrix}
\mathcal{N}_{\partial_{\zeta_1}F_1}(\alpha^1_0 , \alpha^2_0)
&
\mathcal{N}_{\partial_{\zeta_2}F_1}(\alpha^1_0 , \alpha^2_0)
\\
\mathcal{N}_{\partial_{\zeta_1}F_2}(\alpha^1_0 , \alpha^2_0)
&
\mathcal{N}_{\partial_{\zeta_2}F_2}(\alpha^1_0 , \alpha^2_0)
\end{pmatrix}
\end{equation}
and $\alpha^1_0$ and $\alpha^2_0$ are the functions from $\partial\Omega^i$ to $\R$ defined by
\begin{equation}\label{A_{N_F}bis}
\begin{split}
\alpha^1_0 \equiv v^+_{\Omega^o}[\mu^o_0]_{|\partial\Omega^i} +  V_{\Omega^i}[\mu^i_0] + \rho^o_0, \quad
\alpha^2_0 \equiv V_{\Omega^i}[\eta^i_0] +\rho^i_0.
\end{split}
\end{equation}
We will require that the matrix $A_{\mathcal{N}_F,0}$ given by \eqref{A_{N_F}}-\eqref{A_{N_F}bis} satisfies assumption \eqref{Acondition}.
In particular, we notice that assumption \eqref{conditionNF*} implies the validity of the first of the three conditions of \eqref{Acondition} for the matrix $A_{\mathcal{N}_F,0}$.  In order to apply the Implicit Function Theorem (cf. Deimling \cite[Thm. 15.3]{De85}) to equation \eqref{M=0} we need to prove the invertibility of the partial differential of  $M$. 

\begin{prop}\label{diffMprop}
	Let assumptions \eqref{conditionF1F2} and \eqref{conditionNF*} hold. Let $(\mu^o_0,\mu^i_0,\eta^i_0,\rho^o_0,\rho^i_0) \in C^{0,\alpha}(\partial\Omega^o)_0 \times C^{0,\alpha}(\partial\Omega^i) \times C^{0,\alpha}(\partial\Omega^i)_0 \times \R^2$  be as in Proposition \ref{prop u^o_0,u^i_0}. Let $A_{\mathcal{N}_F,0}$ be as in \eqref{A_{N_F}}-\eqref{A_{N_F}bis} and assume that satisfies assumption \eqref{Acondition}. Then the partial differential of $M$ with respect to $(\mu^o,\mu^i,\eta^i,\rho^o,\rho^i)$ evaluated at the point $(\phi_0,\mu^o_0,\mu^i_0,\eta^i_0,\rho^o_0,\rho^i_0)$, which we denote by 
	\begin{equation}\label{partdiff M}
	\partial_{(\mu^o, \mu^i, \eta^i, \rho^o,\rho^i)} M[\phi_0,\mu^o_0, \mu^i_0 ,\eta^i_0,\rho^o_0,\rho^i_0],
	\end{equation}
	is an isomorphism from $C^{0,\alpha}(\partial\Omega^o)_0 \times C^{0,\alpha}(\partial\Omega^i) \times C^{0,\alpha}(\partial\Omega^i)_0 \times \R^2$ to $C^{0,\alpha}(\partial\Omega^o) \times (C^{0,\alpha}(\partial\Omega^i))^2$.
\end{prop}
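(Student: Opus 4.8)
The plan is to show that the partial differential \eqref{partdiff M} coincides \emph{exactly} with the operator $J_{A_{\mathcal{N}_F,0}}$ introduced in Proposition \ref{J_A}, and then to invoke the isomorphism statement (iii) of that proposition. First I would evaluate $M$ at $\phi=\phi_0=\mathrm{id}_{\partial\Omega^i}$ (cf.~\eqref{phi0}). Since $\Omega^i[\phi_0]=\Omega^i$ and $\phi_0(t)=t$, the composition $\mu^i\circ\phi_0^{(-1)}$ reduces to $\mu^i$, and likewise for $\eta^i$; consequently the three components of $M[\phi_0,\cdot]$ in \eqref{M} simplify so that their potential-theoretic parts agree termwise with $J_{A,1}$, $J_{A,2}$, $J_{A,3}$ in \eqref{J_A eq}, while the nonlinear parts are the superposition terms $-F_1(t,\alpha^1,\alpha^2)$ and $-F_2(t,\alpha^1,\alpha^2)$ with $\alpha^1=v^+_{\Omega^o}[\mu^o]_{|\partial\Omega^i}+V_{\partial\Omega^i}[\mu^i]+\rho^o$ and $\alpha^2=V_{\partial\Omega^i}[\eta^i]+\rho^i$.

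Next I would compute the Fr\'echet partial differential with respect to $(\mu^o,\mu^i,\eta^i,\rho^o,\rho^i)$ at the point $(\phi_0,\mu^o_0,\mu^i_0,\eta^i_0,\rho^o_0,\rho^i_0)$. The terms of $M[\phi_0,\cdot]$ that are linear in the densities (namely those built from $\bigl(\pm\frac12 I+W^\ast_{\partial\Omega^o}\bigr)$, $\bigl(\pm\frac12 I+W^\ast_{\partial\Omega^i}\bigr)$, $\nu_{\Omega^o}\cdot\nabla v^-_{\Omega^i}[\cdot]_{|\partial\Omega^o}$, and $\nu_{\Omega^i}\cdot\nabla v^+_{\Omega^o}[\cdot]_{|\partial\Omega^i}$) are their own differentials. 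For the two nonlinear terms I would apply the chain rule together with the differential formula for $\mathcal{N}_F$ recorded just before \eqref{A_{N_F}}: along an increment $(\bar\mu^o,\bar\mu^i,\bar\eta^i,\bar\rho^o,\bar\rho^i)$ one has $\partial\alpha^1=v^+_{\Omega^o}[\bar\mu^o]_{|\partial\Omega^i}+V_{\partial\Omega^i}[\bar\mu^i]+\bar\rho^o$ and $\partial\alpha^2=V_{\partial\Omega^i}[\bar\eta^i]+\bar\rho^i$, and since $(\alpha^1_0,\alpha^2_0)$ are precisely the functions \eqref{A_{N_F}bis} evaluated at the fixed point, the differentials of $-F_1$ and $-F_2$ produce exactly the bilinear pairings $-(A_{11},A_{12})\cdot(\partial\alpha^1,\partial\alpha^2)$ and $-(A_{21},A_{22})\cdot(\partial\alpha^1,\partial\alpha^2)$, with $A_{jk}$ the entries of the matrix $A_{\mathcal{N}_F,0}$ in \eqref{A_{N_F}}. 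Matching these expressions against \eqref{J_A eq} term by term yields the identity
\[
\partial_{(\mu^o,\mu^i,\eta^i,\rho^o,\rho^i)} M[\phi_0,\mu^o_0,\mu^i_0,\eta^i_0,\rho^o_0,\rho^i_0]=J_{A_{\mathcal{N}_F,0}}.
\]

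Finally, by hypothesis the matrix $A_{\mathcal{N}_F,0}$ satisfies \eqref{Acondition}, so Proposition \ref{J_A}~(iii) guarantees that $J_{A_{\mathcal{N}_F,0}}$ is a linear isomorphism from $C^{0,\alpha}(\partial\Omega^o)_0 \times C^{0,\alpha}(\partial\Omega^i) \times C^{0,\alpha}(\partial\Omega^i)_0 \times \R^2$ onto $C^{0,\alpha}(\partial\Omega^o) \times (C^{0,\alpha}(\partial\Omega^i))^2$, which is the desired conclusion. The computation is not hard in substance, and I do not expect a genuine obstacle; the only delicate point is the bookkeeping in the chain-rule step, where one must verify that differentiating the nonautonomous superposition operators at the fixed point reproduces \emph{precisely} the pairing structure of $J_A$. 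This is exactly where assumption \eqref{conditionNF*} (ensuring $\mathcal{N}_{F_1},\mathcal{N}_{F_2}$ are real analytic, hence differentiable, on $(C^{0,\alpha}(\partial\Omega^i))^2$) and the explicit identification \eqref{A_{N_F}}--\eqref{A_{N_F}bis} of the differential enter in an essential way.
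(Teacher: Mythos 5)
Your proof is correct and follows essentially the same route as the paper: both compute the partial differential at $(\phi_0,\mu^o_0,\mu^i_0,\eta^i_0,\rho^o_0,\rho^i_0)$ by differentiating the linear potential-theoretic terms trivially and the superposition terms via the chain rule and the formula \eqref{A_{N_F}}--\eqref{A_{N_F}bis}, identify the result with $J_{A_{\mathcal{N}_F,0}}$, and conclude by Proposition \ref{J_A}~(iii). The only difference is cosmetic: the paper writes out the three components of the differential explicitly rather than phrasing the identification as a termwise matching, but the substance is identical.
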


\begin{proof} \ 
	By standard calculus in Banach spaces, we can verify that the partial differential \eqref{partdiff M} is the linear and continuous operator defined by
	\begin{align*}
	& \partial_{(\mu^o,\mu^i,\eta^i,\rho^o,\rho^i)} M_1[\phi_0,\mu^o_0,\mu^i_0,\eta^i_0,\rho^o_0,\rho^i_0]. (\tilde{\mu}^o,\tilde{\mu}^i,\tilde{\eta}^i,\tilde{\rho}^o,\tilde{\rho}^i)(x) 
	\\
	&\qquad
	= \left( -\frac{1}{2} I + W^\ast_{\Omega^o} \right) [\tilde{\mu}^o] (x) 
	+ \nu_{\Omega^o}(x) \cdot \nabla  v^-_{\Omega^i}[\tilde{\mu}^i] (x) \qquad \forall x \in \partial\Omega^o
	\\
	& \partial_{(\mu^o,\mu^i,\eta^i,\rho^o,\rho^i)} M_2[\phi_0,\mu^o_0,\mu^i_0,\eta^i_0,\rho^o_0,\rho^i_0]. (\tilde{\mu}^o,\tilde{\mu}^i,\tilde{\eta}^i,\tilde{\rho}^o,\tilde{\rho}^i)(t)
	\\
	&\qquad
	= \left( \frac{1}{2} I + W^\ast_{\Omega^i} \right) [\tilde{\mu}^i] (t) 
	+ \nu_{\Omega^i}(t) \cdot \nabla  v^+_{\Omega^o}[\mu^o](t)	
	\\
	& \qquad -\partial_{\zeta_1}F_1\left(t,v^+_{\Omega^o}[\mu^o_0](t) + V_{\Omega^i}[\mu^i_0](t) + \rho^o_0 , V_{\Omega^i}[\eta^i_0](t) +\rho^i_0 \right) \, \\
	& \qquad \qquad \times \left(v^+_{\Omega^o}[\tilde{\mu}^o](t) + V_{\Omega^i}[\tilde{\mu}^i](t) + \tilde{\rho}^o\right)
	\\
	& \qquad
	-\partial_{\zeta_2}F_1\left(t,v^+_{\Omega^o}[\mu^o_0](t) + V_{\Omega^i}[\mu^i_0](t) + \rho^o_0 , V_{\Omega^i}[\eta^i_0](t) +\rho^i_0 \right) \, \\
	& \qquad \qquad \times \left( V_{\Omega^i}[\tilde{\eta}^i](t) + \tilde{\rho}^i\right)
	\qquad \forall t \in \partial\Omega^i
	\\
	& \partial_{(\mu^o,\mu^i,\eta^i,\rho^o,\rho^i)} M_3[\phi_0,\mu^o_0,\mu^i_0,\eta^i_0,\rho^o_0,\rho^i_0]. (\tilde{\mu}^o,\tilde{\mu}^i,\tilde{\eta}^i,\tilde{\rho}^o,\tilde{\rho}^i)(t) 
	\\
	&\qquad
	= \left( -\frac{1}{2} I + W^\ast_{\Omega^i} \right) [\tilde{\eta}^i] (t) 
	\\
	& \qquad -\partial_{\zeta_1}F_2\left(t,v^+_{\Omega^o}[\mu^o_0](t) + V_{\Omega^i}[\mu^i_0](t) + \rho^o_0 , V_{\Omega^i}[\eta^i_0](t) +\rho^i_0 \right) \, \\
	& \qquad \qquad \times  \left(v^+_{\Omega^o}[\tilde{\mu}^o](t) + V_{\Omega^i}[\tilde{\mu}^i](t) + \tilde{\rho}^o\right)
	\\
	& \qquad
	-\partial_{\zeta_2}F_2\left(t,v^+_{\Omega^o}[\mu^o_0](t) + V_{\Omega^i}[\mu^i_0](t) + \rho^o_0 , V_{\Omega^i}[\eta^i_0](t) +\rho^i_0 \right) \, \\
	 & \qquad \qquad \times \left( V_{\Omega^i}[\tilde{\eta}^i](t) + \tilde{\rho}^i\right)
	\qquad \forall t \in \partial\Omega^i
	\end{align*}
	for all $(\tilde{\mu}^o,\tilde{\mu}^i,\tilde{\eta}^i,\tilde{\rho}^o,\tilde{\rho}^i) \in C^{0,\alpha}(\partial\Omega^o)_0 \times C^{0,\alpha}(\partial\Omega^i) \times C^{0,\alpha}(\partial\Omega^i)_0 \times \R^2$. Then, by Proposition \ref{J_A} with $A=A_{\mathcal{N}_F,0}$ (cf.~\eqref{J_A eq}) and   since $A_{\mathcal{N}_F,0}$ satisfies  \eqref{Acondition}, we conclude that 
	 $
	\partial_{(\mu^o,\mu^i,\eta^i,\rho^o,\rho^i)} M[\phi_0,\mu^o_0,\mu^i_0,\eta^i_0,\rho^o_0,\rho^i_0]$ 
	is an isomorphism of Banach spaces.
\end{proof}

 By Propositions  \ref{M=0prop}, \ref{Mrealanal},   \ref{diffMprop}, and by applying the Implicit Function Theorem for real analytic functions in Banach spaces (cf.~Deimling \cite[Thm.~15.3]{De85}) to  equation \eqref{M=0}, we deduce the following real analyticity result for the dependence of the densities in the integral representation fomula for the solutions of problem \eqref{princeqpertu} upon the perturbation of the shape of the inclusion $\Omega^i$.

\begin{teo}\label{M^oteo}
	Let assumptions \eqref{conditionF1F2} and \eqref{conditionNF*} hold. Let
	 $(\mu^o_0,\mu^i_0,\eta^i_0,\rho^o_0,\rho^i_0) \in C^{0,\alpha}(\partial\Omega^o)_0 \times C^{0,\alpha}(\partial\Omega^i) \times C^{0,\alpha}(\partial\Omega^i)_0 \times \R^2$ be as in Proposition \ref{prop u^o_0,u^i_0}. Let $A_{\mathcal{N}_F,0}$ be as in \eqref{A_{N_F}}-\eqref{A_{N_F}bis} and assume that satisfies assumption \eqref{Acondition}. Then, there exist two open neighbourhoods $Q_0$ of $\phi_0$ in $\mathcal{A}^{\Omega^o}_{\partial\Omega^i}$ and $U_0$ of $(\mu^o_0,\mu^i_0,\eta^i_0,\rho^o_0,\rho^i_0)$ in $C^{0,\alpha}(\partial\Omega^o)_0 \times C^{0,\alpha}(\partial\Omega^i) \times C^{0,\alpha}(\partial\Omega^i)_0 \times \R^2$, and a real analytic map  $
	\Lambda \equiv (M^o,M^i,N^i,R^o,R^i): Q_0 \to U_0$ 	such that the set of zeros of $M$ in $Q_0 \times U_0$ coincides with the graph of the function $\Lambda$.  In particular,   
	\[
	\Lambda[\phi_0]=(M^o[\phi_0],M^i[\phi_0],N^i[\phi_0],R^o[\phi_0],R^i[\phi_0])= (\mu^o_0,\mu^i_0,\eta^i_0,\rho^o_0,\rho^i_0).
	\]
\end{teo}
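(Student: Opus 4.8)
The plan is to obtain Theorem \ref{M^oteo} as a direct application of the Implicit Function Theorem for real analytic maps in Banach spaces (cf.~Deimling \cite[Thm.~15.3]{De85}) to the equation $M[\phi,\mu^o,\mu^i,\eta^i,\rho^o,\rho^i]=(0,0,0)$ around the point $(\phi_0,\mu^o_0,\mu^i_0,\eta^i_0,\rho^o_0,\rho^i_0)$. All three hypotheses of that theorem have, in effect, already been secured by the preceding results, so the work consists mainly in checking that they are in place and then reading off the conclusion. Throughout I write $X \equiv C^{0,\alpha}(\partial\Omega^o)_0 \times C^{0,\alpha}(\partial\Omega^i) \times C^{0,\alpha}(\partial\Omega^i)_0 \times \R^2$ and $Y \equiv C^{0,\alpha}(\partial\Omega^o) \times (C^{0,\alpha}(\partial\Omega^i))^2$ for the domain and codomain of the density variables.

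First I would fix the base point. By Proposition \ref{M=0prop}, the equation \eqref{M_0=0} is equivalent to the system \eqref{princintsys}, and the quintuple $(\mu^o_0,\mu^i_0,\eta^i_0,\rho^o_0,\rho^i_0)$ furnished by Proposition \ref{prop u^o_0,u^i_0} solves that system; hence $M[\phi_0,\mu^o_0,\mu^i_0,\eta^i_0,\rho^o_0,\rho^i_0]=(0,0,0)$, so that $(\phi_0,\mu^o_0,\mu^i_0,\eta^i_0,\rho^o_0,\rho^i_0)$ is a genuine zero of $M$ belonging to the domain $\mathcal{A}^{\Omega^o}_{\partial\Omega^i}\times X$.

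Next I would verify the regularity and nondegeneracy hypotheses. The real analyticity of $M$ on the whole of $\mathcal{A}^{\Omega^o}_{\partial\Omega^i}\times X$ is exactly Proposition \ref{Mrealanal}, valid under assumption \eqref{conditionNF*}. For the invertibility of the partial differential with respect to the density variables I would invoke Proposition \ref{diffMprop}: under assumptions \eqref{conditionF1F2} and \eqref{conditionNF*}, and granted that the matrix $A_{\mathcal{N}_F,0}$ defined in \eqref{A_{N_F}}--\eqref{A_{N_F}bis} satisfies \eqref{Acondition} (which is precisely the standing hypothesis of the theorem), the operator $\partial_{(\mu^o,\mu^i,\eta^i,\rho^o,\rho^i)} M[\phi_0,\mu^o_0,\mu^i_0,\eta^i_0,\rho^o_0,\rho^i_0]$ is an isomorphism from $X$ onto $Y$.

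With these three facts in hand, the analytic Implicit Function Theorem yields open neighbourhoods $Q_0$ of $\phi_0$ in $\mathcal{A}^{\Omega^o}_{\partial\Omega^i}$ and $U_0$ of $(\mu^o_0,\mu^i_0,\eta^i_0,\rho^o_0,\rho^i_0)$ in $X$, together with a real analytic map $\Lambda \equiv (M^o,M^i,N^i,R^o,R^i): Q_0 \to U_0$ whose graph coincides with the zero set of $M$ in $Q_0 \times U_0$; the normalization $\Lambda[\phi_0]=(\mu^o_0,\mu^i_0,\eta^i_0,\rho^o_0,\rho^i_0)$ then follows from the uniqueness clause of the theorem evaluated at $\phi=\phi_0$. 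Since every ingredient has been prepared in the earlier propositions, there is no real analytical obstacle in this final step; the only point that genuinely demands care is to confirm that the standing assumption on $A_{\mathcal{N}_F,0}$ is exactly what licenses the use of Proposition \ref{diffMprop}, as it is this hypothesis that transfers the abstract isomorphism property of $J_A$ (Proposition \ref{J_A}) to the concrete partial differential of $M$ at the base point. Thus the bulk of the difficulty lies upstream, in establishing Propositions \ref{Mrealanal} and \ref{diffMprop}, rather than in the application of the theorem itself.
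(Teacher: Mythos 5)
Your proposal is correct and follows exactly the paper's own route: the paper deduces Theorem \ref{M^oteo} precisely by combining Proposition \ref{M=0prop} (the base point is a zero of $M$), Proposition \ref{Mrealanal} (real analyticity of $M$), and Proposition \ref{diffMprop} (invertibility of the partial differential, under the standing hypothesis on $A_{\mathcal{N}_F,0}$), and then applying the Implicit Function Theorem for real analytic maps in Banach spaces (Deimling \cite[Thm.~15.3]{De85}). Your verification of the three hypotheses and the reading-off of the conclusion, including the normalization $\Lambda[\phi_0]=(\mu^o_0,\mu^i_0,\eta^i_0,\rho^o_0,\rho^i_0)$, matches the paper's argument in full.
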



We are now ready to exhibit a family of solutions of problem \eqref{princeqpertu}. 

\begin{defin}\label{u^o_phi,u^i_phi def}
	Let assumptions \eqref{conditionF1F2} and \eqref{conditionNF*} hold. Let $A_{\mathcal{N}_F,0}$ be as in \eqref{A_{N_F}}-\eqref{A_{N_F}bis} and assume that satisfies assumption \eqref{Acondition}. Let $Q_0$ and $\Lambda\equiv (M^o,M^i,N^i,R^o,R^i)$ be as in Theorem \ref{M^oteo}. Then, for each $\phi \in Q_0$ we set
	\begin{align*}
	 u^o_\phi(x)  &= U^o_{\Omega^i[\phi]}[M^o[\phi],M^i[\phi]\circ \phi^{(-1)},N^i[\phi]\circ \phi^{(-1)},R^o[\phi],R^i[\phi]](x)  \\& \quad\qquad \qquad \qquad \qquad \qquad \qquad \qquad \qquad \qquad \qquad \qquad\qquad  \forall x \in \overline{\Omega^o} \setminus  \Omega^i[\phi],
	\\
	u^i_\phi(x)  &= U^i_{\Omega^i[\phi]}[M^o[\phi],M^i[\phi]\circ \phi^{(-1)},N^i[\phi]\circ \phi^{(-1)},R^o[\phi],R^i[\phi]](x) \quad  \forall x \in \overline{\Omega^i[\phi]},
	\end{align*}
	where the pair $(U^o_{\Omega^i[\phi]}[\cdot,\cdot,\cdot,\cdot,\cdot],U^i_{\Omega^i[\phi]}[\cdot,\cdot,\cdot,\cdot,\cdot])$ is defined by \eqref{U^o,U^i}.
\end{defin}
By Propositions \ref{prop u^o_0,u^i_0}, \ref{M=0prop}, and Theorem \ref{M^oteo},  we deduce the  following.
\begin{teo}\label{upertuex}
	Let assumptions \eqref{conditionF1F2} and \eqref{conditionNF*} hold. Let $A_{\mathcal{N}_F,0}$ be as in \eqref{A_{N_F}}-\eqref{A_{N_F}bis} and assume that satisfies assumption \eqref{Acondition}. Let $Q_0$ be as in Theorem \ref{M^oteo} and let $(u^o_\phi,u^i_\phi)$ be as in Definition \ref{u^o_phi,u^i_phi def}. Then, for all $\phi \in Q_0$, $(u^o_\phi,u^i_\phi) \in C^{1,\alpha}(\overline{\Omega^o} \setminus \Omega^i[\phi]) \times C^{1,\alpha}(\overline{\Omega^i[\phi]})$
	is a solution of problem \eqref{princeqpertu}. In particular $(u^o_{\phi_0},u^i_{\phi_0})= (u^o_0,u^i_0)$	is a solution of problem \eqref{princeq}.
\end{teo}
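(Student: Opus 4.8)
The plan is to deduce the statement by assembling the equivalence established in Proposition \ref{M=0prop} with the implicit function description furnished by Theorem \ref{M^oteo}; essentially no new analytic work is required, and the task reduces to careful bookkeeping with the densities and their composition with $\phi^{(-1)}$. First I would fix $\phi \in Q_0$ and recall that, by Theorem \ref{M^oteo}, the graph of $\Lambda \equiv (M^o,M^i,N^i,R^o,R^i)$ coincides with the zero set of $M$ in $Q_0 \times U_0$; in particular
\[
M[\phi,M^o[\phi],M^i[\phi],N^i[\phi],R^o[\phi],R^i[\phi]] = (0,0,0)\qquad\forall \phi \in Q_0.
\]
Since $(M^o[\phi],M^i[\phi],N^i[\phi],R^o[\phi],R^i[\phi]) \in C^{0,\alpha}(\partial\Omega^o)_0 \times C^{0,\alpha}(\partial\Omega^i) \times C^{0,\alpha}(\partial\Omega^i)_0 \times \R^2$, the hypotheses of Proposition \ref{M=0prop} are satisfied with this quintuple in the role of $(\mu^o,\mu^i,\eta^i,\rho^o,\rho^i)$.

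Next I would apply the ``if'' direction of Proposition \ref{M=0prop} to conclude that the pair $(u^o_\phi,u^i_\phi)$ of Definition \ref{u^o_phi,u^i_phi def}, namely
\[
\bigl(U^o_{\Omega^i[\phi]}[M^o[\phi],M^i[\phi]\circ \phi^{(-1)},N^i[\phi]\circ \phi^{(-1)},R^o[\phi],R^i[\phi]],\,U^i_{\Omega^i[\phi]}[\cdots]\bigr),
\]
solves problem \eqref{princeqpertu}. The regularity claim $(u^o_\phi,u^i_\phi) \in C^{1,\alpha}(\overline{\Omega^o} \setminus \Omega^i[\phi]) \times C^{1,\alpha}(\overline{\Omega^i[\phi]})$ then follows from Lemma \ref{rapprharm} (applied with $\Omega = \Omega^i[\phi]$, which is legitimate since $\overline{\Omega^i[\phi]} \subset \Omega^o$ because $\phi \in \mathcal{A}^{\Omega^o}_{\partial\Omega^i}$); here one uses that $\phi$ is a $C^{1,\alpha}$ diffeomorphism, so that $M^i[\phi]\circ \phi^{(-1)}$ and $N^i[\phi]\circ \phi^{(-1)}$ lie in $C^{0,\alpha}(\partial\Omega^i[\phi])$ and $C^{0,\alpha}(\partial\Omega^i[\phi])_0$, respectively, i.e.\ in the domain of the representation map $(U^o_{\Omega^i[\phi]},U^i_{\Omega^i[\phi]})$.

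Finally I would treat the distinguished value $\phi_0 \equiv \mathrm{id}_{\partial\Omega^i}$ (cf.~\eqref{phi0}). By Theorem \ref{M^oteo} we have $\Lambda[\phi_0]=(\mu^o_0,\mu^i_0,\eta^i_0,\rho^o_0,\rho^i_0)$, and since $\Omega^i[\phi_0]=\Omega^i$ and $\phi_0^{(-1)}=\mathrm{id}_{\partial\Omega^i}$, Definition \ref{u^o_phi,u^i_phi def} collapses to
\[
(u^o_{\phi_0},u^i_{\phi_0}) = \bigl(U^o_{\Omega^i}[\mu^o_0,\mu^i_0,\eta^i_0,\rho^o_0,\rho^i_0],\,U^i_{\Omega^i}[\mu^o_0,\mu^i_0,\eta^i_0,\rho^o_0,\rho^i_0]\bigr),
\]
which is precisely the pair $(u^o_0,u^i_0)$ of \eqref{u^o_0,u^i_0} in Proposition \ref{prop u^o_0,u^i_0}, a solution of \eqref{princeq}. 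The only point demanding genuine care—rather than a true obstacle—is the consistent tracking of the change of variables $\phi^{(-1)}$ between the densities appearing in $M$ (defined on the fixed reference boundary $\partial\Omega^i$) and those feeding the representation formula on the moving boundary $\phi(\partial\Omega^i)=\partial\Omega^i[\phi]$; once this correspondence is matched, the result is an immediate corollary of the three cited statements.
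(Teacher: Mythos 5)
Your proposal is correct and follows exactly the route the paper intends: the paper deduces Theorem \ref{upertuex} precisely by combining Theorem \ref{M^oteo} (which gives $M[\phi,\Lambda[\phi]]=(0,0,0)$ for $\phi\in Q_0$), Proposition \ref{M=0prop} (the equivalence between zeros of $M$ and solutions of \eqref{princeqpertu}), and Proposition \ref{prop u^o_0,u^i_0} (identifying $(u^o_{\phi_0},u^i_{\phi_0})$ with $(u^o_0,u^i_0)$). One harmless imprecision: $N^i[\phi]\circ\phi^{(-1)}$ need not belong to $C^{0,\alpha}(\partial\Omega^i[\phi])_0$ (the zero-mean property is not preserved under the change of variables, which introduces the Jacobian $\tilde{\sigma}_n[\phi]$), but this does not matter, since the $C^{1,\alpha}$ regularity of the pair defined by \eqref{U^o,U^i} follows from Theorem \ref{sdp} (ii) for arbitrary $C^{0,\alpha}$ densities, without any mean-zero condition.
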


We are now ready to prove our main result, where we show that suitable restrictions of the functions  $u^o_\phi$ and $u^i_\phi$ depend real analytically on the parameter $\phi$ which determines the domain perturbation. 

\begin{teo}\label{upertuana}
	Let assumptions \eqref{conditionF1F2} and \eqref{conditionNF*} hold. Let $A_{\mathcal{N}_F,0}$ be as in \eqref{A_{N_F}}-\eqref{A_{N_F}bis} and assume that satisfies assumption \eqref{Acondition}. Let $Q_0$ be as in Theorem \ref{M^oteo} and let $(u^o_\phi,u^i_\phi)$ be as in Definition \ref{u^o_phi,u^i_phi def}. Then, the following statements hold.
	\begin{enumerate}
		\item[(i)] Let $\Omega_\mathtt{int}$ be a bounded open subset of $\Omega^o$. Let $Q_\mathtt{int} \subseteq Q_0$ be an open neighbourhood of $\phi_0$ such that  \[
		\overline{\Omega_\mathtt{int}} \subset {\Omega^i[\phi]} \quad \forall \phi \in Q_\mathtt{int}.
		\]
		Then the map from $Q_\mathtt{int}$ to $C^{1,\alpha}(\overline{\Omega_\mathtt{int}})$  that  takes $\phi$ to $u^i_{\phi| \overline{\Omega_\mathtt{int}}}$ is real analytic.
		
		\item[(ii)] Let $\Omega_\mathtt{ext}$ be a bounded open subset of $\Omega^o$. Let $Q_\mathtt{ext} \subseteq Q_0$ be an open neighbourhood of $\phi_0$ such that  \[
				\overline{\Omega_\mathtt{ext}} \subset \Omega^o \setminus \overline{\Omega^i[\phi]} \quad \forall \phi \in Q_\mathtt{ext}.\]	Then the map from $Q_\mathtt{ext}$ to $C^{1,\alpha}(\overline{\Omega_\mathtt{ext}})$  that  takes $\phi$ to $u^o_{\phi| \overline{\Omega_\mathtt{ext}}}$ is real analytic.
	\end{enumerate}
\end{teo}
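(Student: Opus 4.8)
The plan is to exploit the explicit integral representation of $(u^o_\phi,u^i_\phi)$ recorded in Definition \ref{u^o_phi,u^i_phi def} together with the real analytic dependence of the densities on $\phi$ furnished by Theorem \ref{M^oteo}. Spelling out \eqref{U^o,U^i}, one has
\[
u^i_\phi = v^+_{\Omega^i[\phi]}[N^i[\phi]\circ\phi^{(-1)}] + R^i[\phi],
\]
\[
u^o_\phi = \bigl(v^+_{\Omega^o}[M^o[\phi]] + v^-_{\Omega^i[\phi]}[M^i[\phi]\circ\phi^{(-1)}] + R^o[\phi]\bigr)_{|\overline{\Omega^o}\setminus\Omega^i[\phi]},
\]
and since by Theorem \ref{M^oteo} the map $\phi\mapsto(M^o[\phi],M^i[\phi],N^i[\phi],R^o[\phi],R^i[\phi])$ is real analytic, it suffices to prove that, once restricted to the fixed target domains, each of the layer potentials depends real analytically on the pair (shape parameter, density), and then to invoke that compositions of real analytic maps are real analytic.

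For statement (i) I would use Lemma \ref{lemmanotation} to pull the integral back to the fixed boundary $\partial\Omega^i$, writing for $x\in\overline{\Omega_\mathtt{int}}$
\[
v^+_{\Omega^i[\phi]}[N^i[\phi]\circ\phi^{(-1)}](x)=\int_{\partial\Omega^i} S_n(x-\phi(s))\,N^i[\phi](s)\,\tilde{\sigma}_n[\phi](s)\,d\sigma_s.
\]
The crucial observation is that, by the assumption $\overline{\Omega_\mathtt{int}}\subset\Omega^i[\phi]$ for all $\phi\in Q_\mathtt{int}$, the evaluation point $x$ never meets the support $\phi(\partial\Omega^i)$, so the kernel $S_n(x-\phi(s))$ is non-singular and real analytic jointly in $(x,s)$ and in the parameter $\phi$. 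Hence the analyticity results for integral operators with real analytic kernels and no singularities (Lanza de Cristoforis and Musolino \cite[Prop.~4.1]{LaMu13}, together with the analyticity of $\phi\mapsto\tilde{\sigma}_n[\phi]$ from Lemma \ref{lemmanotation}) yield that the map $(\phi,\mu)\mapsto v^+_{\Omega^i[\phi]}[\mu\circ\phi^{(-1)}]_{|\overline{\Omega_\mathtt{int}}}$ is real analytic into $C^{1,\alpha}(\overline{\Omega_\mathtt{int}})$. Composing with $\phi\mapsto N^i[\phi]$ and adding the real analytic scalar $R^i[\phi]$ concludes (i).

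Statement (ii) follows the same scheme applied to the three summands of $u^o_\phi$. The term $v^+_{\Omega^o}[M^o[\phi]]_{|\overline{\Omega_\mathtt{ext}}}$ has fixed support $\partial\Omega^o$, so it depends real analytically on $\phi$ simply because $v^+_{\Omega^o}[\cdot]$ is linear and continuous into $C^{1,\alpha}(\overline{\Omega^o})$ (Theorem \ref{sdp}~(ii)) and $\phi\mapsto M^o[\phi]$ is real analytic; the term $v^-_{\Omega^i[\phi]}[M^i[\phi]\circ\phi^{(-1)}]_{|\overline{\Omega_\mathtt{ext}}}$ is treated exactly as in (i), the key point being that $\overline{\Omega_\mathtt{ext}}\subset\Omega^o\setminus\overline{\Omega^i[\phi]}$ keeps $x$ away from the moving support $\phi(\partial\Omega^i)$; finally $R^o[\phi]$ is a real analytic constant. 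The only genuine obstacle, and the reason the statement is phrased in terms of the auxiliary fixed subdomains $\Omega_\mathtt{int}$ and $\Omega_\mathtt{ext}$ rather than the full $\phi$-dependent sets, is twofold: the target Banach space must be independent of $\phi$ for the notion of real analyticity to make sense, and the singularity of $S_n$ on the integration support must be avoided so that the non-singular-kernel machinery applies. Both issues are resolved by the separation hypotheses $\overline{\Omega_\mathtt{int}}\subset\Omega^i[\phi]$ and $\overline{\Omega_\mathtt{ext}}\subset\Omega^o\setminus\overline{\Omega^i[\phi]}$ holding uniformly on $Q_\mathtt{int}$ and $Q_\mathtt{ext}$; once separation is in force, the argument reduces to a routine composition of the cited analyticity results.
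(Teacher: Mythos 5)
Your proposal is correct and follows essentially the same route as the paper's proof: rewrite the solutions via \eqref{U^o,U^i}, pull the perturbed layer potentials back to the fixed boundary $\partial\Omega^i$ using Lemma \ref{lemmanotation}, observe that the separation hypotheses keep the evaluation points away from the singularity of $S_n$, and then combine the analyticity of $\Lambda$ from Theorem \ref{M^oteo} with the result on integral operators with real analytic kernels and no singularities (Lanza de Cristoforis and Musolino \cite[Prop.~4.1]{LaMu13}). The paper proves (i) exactly this way and leaves (ii) to the reader; your treatment of (ii), splitting $u^o_\phi$ into the fixed-support term $v^+_{\Omega^o}[M^o[\phi]]$, the moving-support term handled as in (i), and the constant $R^o[\phi]$, is precisely the intended argument.
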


\begin{proof} \ 
 We  prove (i).  By Definition \ref{u^o_phi,u^i_phi def}, by \eqref{U^o,U^i} and by Lemma \ref{lemmanotation}, we have
	\begin{equation*}
	\begin{split}
	u^i_\phi(x) &= U^i_{\Omega^i[\phi]}[M^o[\phi],M^i[\phi]\circ \phi^{(-1)},N^i[\phi]\circ \phi^{(-1)},R^o[\phi],R^i[\phi]](x) 
	\\
	& = \int_{\partial\Omega^i}  S_n(x-\phi(s))  \, N^i[\phi](s) \, \tilde{\sigma}_n[\phi](s) \,d\sigma_s	+ R^i[\phi]	\qquad \forall x \in \overline{\Omega^i[\phi]}
	\end{split}
	\end{equation*}
	and for all $\phi \in Q_0$. By the assumption $Q_\mathtt{int} \subseteq Q_0$ and Theorem \ref{M^oteo}, we know that the map from $Q_\mathtt{int}$ to $\R$  that  takes $\phi$ to $R^i[\phi]$ is real analytic. Moreover, by the real analyticity of $N^i[\cdot]$ (cf.~Theorem \ref{M^oteo}) and by the properties of integral operators with real analytic kernels and no singularities (see Lanza de Cristoforis and Musolino \cite[Prop. 4.1]{LaMu13}), we can prove that the map from $Q_\mathtt{int}$ to $C^{1,\alpha}(\overline{\Omega_\mathtt{int}})$  that  takes $\phi$ to the function $
	\int_{\partial\Omega^i}  S_n(x-\phi(s))  \, N^i[\phi](s) \, \tilde{\sigma}_n[\phi](s) \,d\sigma_s$ of the variable $x \in \overline{\Omega_\mathtt{int}}$
	is real analytic (see also Lemma \ref{lemmanotation}). Hence, we deduce the validity of (i).  The proof of (ii) is similar and it is left to the reader. 
\end{proof}

\section*{Acknowledgements}
The authors are members of the ``Gruppo Nazionale per l'Analisi Matematica, la Probabilit\`a e le loro Applicazioni'' (GNAMPA) of the ``Istituto Nazionale di Alta Matematica'' (INdAM). R.M. acknowledges the support of the Project "Variational methods for stationary and evolution problems with singularities
and interfaces" (PRIN 2017) funded by the Italian Ministry of Education, University, and Research. P.M. acknowledges the support of the Project BIRD191739/19 ``Sensitivity analysis of partial differential equations in
the mathematical theory of electromagnetism'' (University of Padova), of the  ``INdAM GNAMPA Project 2020 - Analisi e ottimizzazione asintotica per autovalori in domini con piccoli buchi'', and of  the grant ``Challenges in Asymptotic and Shape Analysis - CASA'' (Ca' Foscari University of Venice).

\end{document}